\documentclass[10pt]{article}
\usepackage[all]{xy}
\usepackage{amsfonts,amsmath,oldgerm,amssymb,amscd}
\newcommand{\ra}{\rightarrow}		
\newcommand{\lra}{\longrightarrow}

\newcommand{\surj}{\ra\!\!\!\ra}	

\newcommand{\ol}{\overline}		
\newcommand{\wt}{\widetilde}

\newtheorem{theorem}{Theorem}[section]
\newtheorem{proposition}[theorem]{Proposition}
\newtheorem{lemma}[theorem]{Lemma}
\newtheorem{definition}[theorem]{Definition}
\newtheorem{corollary}[theorem]{Corollary}

\newcommand{\ga}{\alpha}	\newcommand{\gb}{\beta}
		\newcommand{\gd}{\delta}
	
	\newcommand{\gl}{\lambda}
	\newcommand{\gp}{\phi}
		\newcommand{\gs}{\sigma}
\newcommand{\gt}{\theta}

	\newcommand{\gD}{\Delta}
	
	\newcommand{\gT}{\Theta}

\newcommand{\BA}{\mbox{$\mathbb A$}}	
\newcommand{\BC}{\mbox{$\mathbb C$}}

	\newcommand{\BP}{\mbox{$\mathbb P$}}
\newcommand{\bq}{\mbox{$\mathbb Q$}}

\newcommand{\CE}{\mbox{$\mathcal E$}}	
	
\newcommand{\CI}{\mbox{$\mathcal I$}}	\newcommand{\CJ}{\mbox{$\mathcal J$}}
\newcommand{\CK}{\mbox{$\mathcal K$}}

\newcommand{\ot}{\mbox{\,$\otimes$\,}}	\newcommand{\op}{\mbox{$\oplus$}}
\newcommand{\Spec}{\mbox{\rm Spec\,}}	\newcommand{\hh}{\mbox{\rm ht\,}}
	
\newcommand{\Aut}{\mbox{\rm Aut\,}}	
	
\newcommand{\dd}{\mbox{\rm dim\,}}	

\newcommand{\Um}{\mbox{\rm Um}}

\newcommand{\sur}{\twoheadrightarrow}
\newcommand{\bp}{\begin{proposition}}
\newcommand{\ep}{\end{proposition}}
\newcommand{\bl}{\begin{lemma}}
\newcommand{\el}{\end{lemma}}
\newcommand{\bt}{\begin{theorem}}
\newcommand{\et}{\end{theorem}}
\newcommand{\bc}{\begin{corollary}}
\newcommand{\ec}{\end{corollary}}
\newcommand{\bd}{\begin{definition}}
\newcommand{\ed}{\end{definition}}

\def\rmk{\refstepcounter{theorem}\paragraph{{\bf Remark} \thetheorem}}
\def\proof{\paragraph{Proof}}
\def\example{\refstepcounter{theorem}\paragraph{{\bf Example} \thetheorem}}
\def\quest{\refstepcounter{theorem}\paragraph{{\bf Question} \thetheorem}}
\def\definition{\refstepcounter{theorem}\paragraph{{\bf Definition} \thetheorem}}
\oddsidemargin .5 in		\evensidemargin .3 in

\headsep .5in			\textwidth 14.6cm
%\textheight 20cm		\topmargin 0.5cm
%\topskip 0.5cm			\brokenpenalty=10000
%\clubpenalty=1000		\widowpenalty=1000
\hyphenpenalty=1000		
%\flushbottom

%\pagenumbering{roman}
%\pagestyle{plain}
\newcommand{\remark}{\rmk}
\begin{document}

\begin{center}
{\Large \bf A question of Nori : Further studies and applications} 
\end{center}

\begin{center}
{\large Mrinal Kanti Das and Manoj K. Keshari}
\end{center}

\begin{center}
Stat-Math Unit, Indian Statistical Institute,
 203 B. T. Road, Kolkata 700108; mrinal@isical.ac.in

Department  of Mathematics,
 IIT Bombay, Mumbai 400076; keshari@math.iitb.ac.in
\end{center}

\section{Introduction}
Let $R$ be a commutative, Noetherian ring and $I\subset R$ be an ideal
such that $I$ is locally generated by $n$ elements. In general, local
generators may not have a lift to a set of $n$ global generators of
$I$.  For example, if $R$ is the coordinate ring of a real $3$-sphere
and $I=\mathfrak{m}$ is a real maximal ideal, then it is well-known
that $\mathfrak{m}/\mathfrak{m}^2$ is $3$-generated (therefore,
$\mathfrak{m}$ is locally $3$-generated) but no surjection
$\ol{\theta}: R^3 \surj \mathfrak{m}/\mathfrak{m}^2$ can be lifted to
a surjection $\theta :R^3\surj \mathfrak{m}$ (in fact, $\mathfrak{m}$
is not generated by $3$ elements).  Let us now cite a non-trivial
instance where one has an affirmative conclusion.  Let $R=A[T]$ and
$I\subset A[T]$ be an ideal such that $I/I^2$ is generated by $n$
elements, where $n\geq \dd(A[T]/I)+2$. If $I$ contains a monic
polynomial, then it is implicit in Mandal's famous result \cite{m1}
that any surjection $\ol{\theta}: A[T]^n\sur I/I^2$ can be lifted to a
surjection $\theta : A[T]^n\sur I$.

We now focus on the set up when $R=A[T]$ and consider the following
(more general) situation : Let $I\subset A[T]$ be an ideal and $P$ be
a projective $A$-module of rank $n\geq \dd(A[T]/I)+2$. Assume that
there is a surjection $\ol{\theta}: P[T]\surj I/I^2$. One may wonder,
under what condition(s) $\ol{\theta}$ may be lifted to a surjection
$\theta:P[T]\surj I$. A necessary condition obviously would be that
$I(0)$ should be image of $P$. But there are examples \cite[5.2]{brs1}
to show that this is not sufficient.  In this context, an intriguing
open question is the following one which is a variant of a question of
Nori (see \cite{m2,mrs}).

\medskip

\quest\label{open} 
Let $A$ be a regular ring, $I\subset A[T]$ be an
ideal and $P$ be a projective $A$-module of rank $n$ where $n\geq \dd
(A[T]/I)+2$. Assume that there is a surjection $\ol{\varphi}:P[T]\surj
I/(I^2T)$. Can $\ol{\varphi}$ be lifted to a surjective $A[T]$-linear
map $\varphi: P[T]\surj I$?

\medskip

Note that $\ol{\varphi}(0):P\surj I(0)$.
The assumption of regularity cannot be dropped (as there is an example
due to Bhatwadekar, Mohan Kumar and Srinivas \cite[6.4]{brs1}) unless
the ideal $I$ has some special properties (as shown by Mandal in
\cite{m2} where he gave an affirmative answer when $I$ contains a monic
polynomial, without assuming the ring to be regular). The best result
that we have so far is the one due to Bhatwadekar-Keshari
\cite[4.13]{bk} where they assume $A$ to be a regular domain of
dimension $d$ which is essentially of finite type over an infinite
perfect field $k$ and further that $\hh I=n$ with $2n\geq d+3$. It has
been shown in \cite{d4} that one need not take the field to be perfect
in \cite[4.13]{bk}.

As it stands, even when $A$ is a ring of the type as considered in
\cite{bk}, one does not have a complete answer to (\ref{open}). One of our objectives
in this paper is to explore whether we can improve the result of
\cite{bk} by relaxing the condition on height of the ideal, i.e.,
allowing ideals of possibly smaller height into consideration. We
prove (see (\ref{4.13}) below)

\begin{theorem}
Let $A$ be a regular domain which is essentially of finite type over
an infinite field $k$. Let $I\subset A[T]$ be an ideal and $P$ be a
projective $A$-module of rank $n$ where $n+\hh I \geq \dd
A[T]+2$. Then any surjection $\phi: P[T] \surj I/(I^2T)$ can be lifted
to a surjection from $P[T]$ to $I$.
\end{theorem}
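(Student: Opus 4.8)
The plan is to reduce the general-height statement to the known equal-height case of Bhatwadekar--Keshari \cite[4.13]{bk} (in the non-perfect form of \cite{d4}) by a standard ``adding variables'' or ``generic hyperplane section'' manoeuvre. First I would handle the trivial reductions: we may assume $I$ is proper and, enlarging $I$ if necessary (replacing $I$ by an ideal $I'\subseteq I$ with $I=I'+(I^2T)$ and $I'$ of height equal to $\hh I$ on the relevant components), that $\hh I = n'$ for the sharp value; more precisely I would first replace the given surjection $\ol\varphi$ by one onto $I/(I^2T)$ with $I$ having no component of height $< n$ contained in $T=0$ pathologies, using general position arguments as in \cite{bk}. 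The key point is that the hypothesis $n+\hh I\ge \dd A[T]+2$ is exactly $\dd(A[T]/I)+2 \le n$ once $\hh I$ is as large as the codimension allows, so when $\hh I = n$ the theorem is literally \cite[4.13]{bk}. The real content is the case $\hh I < n$.

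To treat $\hh I < n$, I would adjoin indeterminates. Set $B = A[Y_1,\dots,Y_r]$ where $r = n - \hh I$, and consider the ideal $J = IB + (Y_1,\dots,Y_r) \subset B[T]$ (or a suitable such construction so that $\hh J = \hh I + r = n$ in $B[T]$, and $J(0) = I(0)B + (Y_1,\dots,Y_r)$). The surjection $\ol\varphi: P[T]\surj I/(I^2T)$ extends to $(P\otimes_A B)[T] \surj J/(J^2T)$: on the $P$-part use $\ol\varphi\otimes B$, and on the $r$ new free generators map onto the images of the $Y_i$. One checks $J^2T$ controls the square correctly. Since $B$ is again a regular domain essentially of finite type over $k$, $P\otimes_A B$ has rank $n$, and now $\hh J = n$, apply the equal-height result over $B$: we get a surjection $\Phi: (P\otimes_A B)[T]\surj J$ lifting the extended map. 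The main obstacle --- and this is where care is needed --- is to descend $\Phi$ back to $A[T]$: one specializes $Y_1,\dots,Y_r$ appropriately (e.g. to $0$, or to elements of $A[T]$ chosen in general position so that the specialized surjection lands in $I$ rather than a larger ideal) and verifies that the specialized map is still surjective onto $I$ and still lifts the original $\ol\varphi$ modulo $I^2T$. Controlling the specialization so that surjectivity is preserved is the crux; this typically uses a moving lemma / general position argument (Swan-type Bertini, or the Eisenbud--Evans techniques as packaged in \cite{bk}) together with the fact that $I(0)$ is the image of $P$, which is guaranteed by $\ol\varphi(0): P\surj I(0)$.

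An alternative route, which may be cleaner and avoids descent entirely, is to subtract rather than add: use a general position argument directly over $A[T]$ to replace $I$ by an ideal of height $n$. Concretely, using that $\ol\varphi$ gives $P\surj I(0)$ and that $A[T]/I$ has dimension $\le n-2$, I would produce (after possibly a $\GL$-translation of $\ol\varphi$, in the spirit of the proof of \cite[4.13]{bk}) an element and a surjection $P[T]\surj I_1/(I_1^2T)$ with $I_1 \subseteq I$, $I_1 + (I^2T) = I$ (so a lift for $I_1$ gives one for $I$), and $\hh I_1 = n$ — the point being that the ``extra'' codimension can be filled in because the obstruction lives in a group that vanishes for dimension reasons once we are allowed to shrink the ideal. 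Then \cite[4.13]{bk} applies to $I_1$ verbatim and we pull the lift back up to $I$.

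I expect the genuinely hard step to be the general position / moving argument that arranges $\hh = n$ while keeping the datum $\ol\varphi$ (or a suitable modification of it) and while keeping $I(0)$ a quotient of $P$ with the compatibility intact; over a non-perfect field this is exactly the subtlety that \cite{d4} had to address in removing perfectness from \cite{bk}, so I would lean on those techniques. The verification that squares and the ``$T$-twist'' $I^2T$ behave well under the extension/specialization, and that $\ol\varphi(0):P\surj I(0)$ is preserved, should be routine bookkeeping once the height reduction is in place.
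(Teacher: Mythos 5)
Both of your concrete routes break down, and the gap is precisely where the paper's new machinery enters. Your ``cleaner'' alternative --- replace $I$ by $I_1\subseteq I$ with $I_1+(I^2T)=I$ and $\hh I_1=n$ --- is impossible: an ideal contained in $I$ has height at most $\hh I<n$, so no general position argument can produce such an $I_1$. What the Moving Lemma (\ref{moving}) actually gives (after first arranging $J=I\cap A\subset \CJ(A)$ via (\ref{BRS})) is a lift $\Phi:P[T]\surj I\cap I'$ where $I'$ is a \emph{residual} ideal of height $n$ comaximal with $(J^2T)$, not a subideal of $I$; and then a lift for $I$ does \emph{not} follow formally from a lift for $I'$. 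One must first lift the induced map to a surjection $P[T]\surj I'/(I'^2T)$ (possible because $I'(0)=A$ and $P$ has a unimodular element, by \cite[3.9]{brs1}), apply \cite[4.13]{bk} together with \cite{d4} to get $\Psi:P[T]\surj I'$, and then \emph{subtract}: from $\Psi$ and $\Phi$ one wants a surjection onto $I$ compatible with $\phi$. The dimension inequalities needed for this subtraction only hold after inverting monic polynomials, so the paper performs it over $A(T)$ using the new Subtraction Principle (\ref{sub}) (valid in the range $n+\hh\,\cdot\geq \dd+3$), obtaining a lift of $\phi\ot A(T)$, and then descends from $A(T)$ to $A[T]$ via (\ref{4.9}), whose proof is itself nontrivial (it uses (\ref{moving}), (\ref{sub}) again, \cite[4.9]{bk} and the Mandal--Raja Sridharan theorem (\ref{mrs})). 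Your phrase ``the obstruction lives in a group that vanishes for dimension reasons'' has no counterpart in the actual argument, and calling the remaining verifications ``routine bookkeeping'' conceals exactly the addition/subtraction principles and the $A(T)$-descent that constitute Section~3 of the paper.

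Your first route (adjoining variables) fails even earlier. With $B=A[Y_1,\dots,Y_r]$ and $J=IB+(Y_1,\dots,Y_r)$, the module $P\ot_A B$ has rank $n$ and no ``new free generators,'' while $\mu(J/J^2)$ can be as large as $n+r$ locally, so in general there is no surjection $(P\ot_A B)[T]\surj J/(J^2T)$ extending $\phi$ at all; if you instead enlarge the module to $(P\ot_A B)\op B^r$ of rank $n+r$, then the equal-height hypothesis of \cite[4.13]{bk} would require $\hh J=n+r$, whereas $\hh J=\hh I+r=n$. So the construction cannot even be set up, independently of the descent/specialization problem you yourself flagged as unresolved. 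In short, the high-level idea (reduce to the equal-height theorem by general position) is the right one, but the proposal is missing the actual reduction mechanism: the unequal-height moving lemma, the subtraction principle with the hypothesis $n+\hh\geq\dd+3$, and the passage through $A(T)$ with the descent result (\ref{4.9}).
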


Note that when $\hh I=n$, we recover \cite[4.13]{bk}. However, we are
not giving a new proof of the theorem of  Bhatwadekar-Keshari here. We are using
their theorem to prove the above result, for which we need to have a
suitable set of so called ``moving lemma", ``addition" and
``subtraction" principles in a more general set up than the existing
ones. We prove these in Section 3.

The cluster of moving lemma, addition and subtraction principles also
enables us to answer another interesting question. Let $R$ be a
Noetherian ring, $J\subset R$ be an ideal with $\mu(J/J^2)=n$, where
$n+\hh J \geq \dd R+3$. Given a surjection $\omega_J:(R/J)^n\surj
J/J^2$ we associate an element $s^n(J,\omega_J)$ in the $n^{th}$ Euler
class group $E^n(R)$ (see (\ref{loweuler}) for the definition of
$E^n(R)$) such that $s^n(J,\omega_J)=0$ if and only if $\omega_J$ can
be lifted to a surjective map $\theta : R^n\surj J$. We call
$s^n(J,\omega_J)$ the $n^{th}$ Segre class of the pair $(J,\omega_J)$.
For further details look at Section 4.

Any reader familiar with the theory of Euler class groups will be
aware that Question \ref{open} is intimately related to the Euler
class groups. When Nori proposed the definition of the Euler class
group (of a smooth affine domain $A$) some twenty years back, he also
suggested Question \ref{open} as an important tool (see \cite{m2,mrs}
for motivation). Nori's definition of the Euler class group, given in
terms of ``homotopy" with respect to the affine line $\BA^1$, appeared
in \cite{brs1} (see (\ref{nori}) below). However,
Bhatwadekar-Sridharan settled (\ref{open}) in some special case
\cite[3.8]{brs1}, then came up with an alternative definition of the
Euler class group and proved its equivalence with Nori's definition by
using \cite[3.8]{brs1}. Section 5 is about revival of Nori's
``homotopical" definition of the Euler class group. In this section we
closely investigate Nori's definition, illustrate how it works for
smooth affine domains and recover the main result of \cite{brs1} on
Euler classes. With an example due to Bhatwadekar (personal
communication) we show why Nori's definition does not naturally extend
to non-regular rings. At the end, we formulate the correct
generalization to such rings.

%$$$$$$$$$$$$$$$$$$$$$$$$$$$$$$$$$$$$$$$$$$$$$$$$$$$$$

\section{Preliminaries}
All rings are assumed to be commutative Noetherian and modules are
assumed to be finitely generated. We refer to \cite{bk} for any
undefined term. In this section we collect some results which will be
used frequently in later sections.

We begin by stating two classical results due to Serre \cite{s}
and Bass \cite{Bass},  respectively.

\begin{theorem}\label{serre} (Serre) 
Let $A$ be a ring and $P$ be a projective $A$-module. If
$\text{rank}(P) >\dd A/\CJ(A)$, then $P\simeq Q\oplus A$ for some
$A$-module $Q$, where $\CJ(A)$ is the Jacobson radical of $A$.
\end{theorem}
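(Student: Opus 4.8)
The plan is to deduce the statement from the existence of a \emph{unimodular element} of $P$: an element $p\in P$ such that the ideal $O_P(p):=\sum_{\psi\in\Hom(P,A)}A\,\psi(p)$ equals $A$, equivalently such that some $A$-linear map $\phi\colon P\ra A$ satisfies $\phi(p)=1$. If such $p$ exists, then $\phi$ splits the inclusion of the free rank-one submodule $Ap\inj P$ (it is free since $\phi(ap)=a$), giving $P\simeq Ap\oplus\ker\phi\simeq A\oplus Q$ with $Q:=\ker\phi$, which is exactly the conclusion. So the whole problem reduces to producing a unimodular element. A standard local computation (using $O_P(p)_\p=O_{P_\p}(p)$ and the freeness of $P_\p$) shows that $p$ is unimodular if and only if $p$ is \emph{basic at every maximal ideal}, i.e.\ the image of $p$ in $P/\mm P$ is nonzero for each maximal ideal $\mm$; indeed an ideal that localizes to the unit ideal at every maximal ideal is the unit ideal.

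The next step is to reduce to the case $\CJ(A)=0$. Put $\ol A=A/\CJ(A)$ and $\ol P=P\ot_A\ol A=P/\CJ(A)P$. Then $\ol A$ is Noetherian of dimension $d:=\dd A/\CJ(A)$ and $\ol P$ is projective with $\mathrm{rank}(\ol P_{\bar\p})>d$ for every prime $\bar\p$. Because $P$ is finitely presented, $\Hom_A(P,A)\ot_A\ol A\iso\Hom_{\ol A}(\ol P,\ol A)$, so the natural map $\Hom_A(P,A)\ra\Hom_{\ol A}(\ol P,\ol A)$ is surjective; hence a unimodular element $\ol p$ of $\ol P$ lifts to an element $p\in P$ with $O_P(p)+\CJ(A)=A$, and since $\CJ(A)$ is the Jacobson radical this forces $O_P(p)=A$, i.e.\ $p$ is unimodular in $P$. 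Thus it is enough to prove: over a Noetherian ring $A$ with $\dd A=d$, any finitely generated projective module $P$ with $\mathrm{rank}(P_\p)>d$ for all primes $\p$ has a unimodular element.

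I would prove this by induction on $d$, via the classical general position (``basic element'') argument. When $d=0$, $A$ is Artinian, hence a finite product of Artinian local rings over each of which $P$ is free of rank $\geq 1$, so $P\simeq A\oplus Q$ at once. For $d\geq 1$, fix generators $p_1,\dots,p_s$ of $P$ and start with $p:=p_1$. Let $Y:=V(O_P(p))\subseteq\Spec A$ be the closed set of primes at which $p$ is not basic; then $\dd Y\leq d$. If $Y$ still contains a maximal ideal, I would modify $p$ to $p':=p+\sum_{i\geq 2}a_ip_i$: at each minimal prime $\p_j$ of maximal dimension in $Y$ some generator $p_i$ is basic (the $p_i$ span $P$ and $\mathrm{rank}(P)\geq 1$), and since $\mathrm{rank}(P)>d\geq\dd Y$ there is enough room -- by prime avoidance -- to choose the coefficients $a_i$ inside a suitable ideal cutting out the locus where $p$ is already basic, so as not to spoil it, making $p'$ basic along every top-dimensional component of $Y$, so that $\dd V(O_P(p'))<\dd Y$. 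Iterating this at most $d+1$ times yields a $p$ that is basic at every prime, in particular at every maximal ideal, hence unimodular; then the first paragraph finishes the proof.

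The main obstacle is precisely the bookkeeping in this last step: one must ensure simultaneously that each modification strictly lowers the dimension of the bad locus and does not re-create non-basic points where basicness had already been arranged. This is handled by the classical device of choosing the $a_i$ inside (a power of) the conductor-type ideal defining the good locus and running a Noetherian induction on the closed set $Y$ -- the argument of Serre, sharpened later by Bass and by Eisenbud--Evans. A minor point worth flagging is that $P$ is not assumed to have constant rank, so the hypothesis must be read as ``$\mathrm{rank}(P_\p)>d$ for every prime $\p$''; since every assertion above is checked prime by prime, this causes no difficulty.
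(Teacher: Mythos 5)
This statement is quoted in the paper as a classical theorem of Serre (with a pointer to the reference), so there is no in-paper proof to compare against; your proposal must be judged on its own. Your route is the standard one: split off a free summand via a unimodular element, pass to $A/\CJ(A)$ (using $\Hom_A(P,A)\ot A/\CJ(A)\simeq \Hom_{A/\CJ(A)}(P/\CJ(A)P,\,A/\CJ(A))$ and the fact that an ideal comaximal with the Jacobson radical is the unit ideal), and then produce a unimodular element over a ring of dimension $d$ with $\mathrm{rank}\,P_{\p}>d$ by the general position / basic element induction of Serre, Bass and Eisenbud--Evans. The reductions are correct as written, including the observation that unimodularity only needs basicness at maximal ideals and the remark about non-constant rank. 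The one place where your argument is only a sketch is the inductive step lowering the dimension of the non-basic locus: making $p'=p+\sum a_ip_i$ basic along the top-dimensional components of $V(O_P(p))$ without destroying basicness already achieved is precisely the content of the classical basic element lemma (Serre's lemma, or Eisenbud--Evans as recalled in the paper's Theorem \ref{EE}), and your proof is complete only modulo citing that lemma rather than reproving its bookkeeping. Since the paper itself treats the whole statement as a citable classical fact, this level of appeal is reasonable, but be aware that this is where all the real work lies.
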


\begin{theorem}\label{bass} (Bass) 
Let $A$ be a ring and let $P$ be a projective $A$-module of rank $>\dd
A/\CJ(A)$. Then the group ${\mathcal E}(P\op A)$ of transvections of
$P\op A$ acts transitively on $\Um(P\op A)$.
\end{theorem}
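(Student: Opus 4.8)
The plan is to prove that the $\mathcal{E}(P\op A)$-orbit of $(0,1)\in P\op A$ is all of $\Um(P\op A)$. Recall that $\mathcal{E}(P\op A)$ is generated by the transvections $\tau_q\colon(x,t)\mapsto(x+tq,t)$ for $q\in P$ and $\sigma_\psi\colon(x,t)\mapsto(x,t+\psi(x))$ for $\psi\in P^*$. The first step is a reduction to the assertion
\[
(\dagger)\qquad\text{for every }(p,a)\in\Um(P\op A)\text{ there is }q\in P\text{ with }p+aq\in\Um(P).
\]
Indeed, granting $(\dagger)$, put $p_1=p+aq$ and choose $\psi\in P^*$ with $\psi(p_1)=1$; then
\[
(p,a)\ \by{\tau_q}\ (p_1,a)\ \by{\sigma_{(1-a)\psi}}\ (p_1,1)\ \by{\tau_{-p_1}}\ (0,1),
\]
so every unimodular element of $P\op A$ is $\mathcal{E}(P\op A)$-equivalent to $(0,1)$, which is what transitivity means. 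So it suffices to prove $(\dagger)$.

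I would prove $(\dagger)$ by induction on $d=\dd A/\CJ(A)$, keeping the hypothesis $\mathrm{rank}(P)\ge d+1$ in force. When $d=0$ the ring $A/\CJ(A)$ is Artinian, so $A$ has only finitely many maximal ideals $\mathfrak{m}_1,\dots,\mathfrak{m}_r$, and these are all of $\mathrm{Max}\,A$. For each $i$: if $a\in\mathfrak{m}_i$ then $p\notin\mathfrak{m}_iP$ by unimodularity of $(p,a)$, so $p+aq\notin\mathfrak{m}_iP$ for every $q$; if $a\notin\mathfrak{m}_i$ then $P/\mathfrak{m}_iP$ is a non-zero $A/\mathfrak{m}_i$-vector space, hence has at least two elements, so we may prescribe the image of $q$ in it to avoid the single vector $-\bar a^{-1}\bar p$. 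Lifting the resulting element of $\prod_iP/\mathfrak{m}_iP\simeq P/(\bigcap_i\mathfrak{m}_i)P$ (the $\mathfrak{m}_i$ being pairwise comaximal) back to $P$ gives a $q$ with $p+aq$ basic at every maximal ideal, i.e.\ $p+aq\in\Um(P)$.

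For the inductive step I would study the non-basic locus of $p$, that is the set of primes $\mathfrak{p}$ with $p\in\mathfrak{p}P_{\mathfrak{p}}$. Setting $\mathfrak{b}=O_P(p)=\{\varphi(p):\varphi\in P^*\}$, this locus is exactly $V(\mathfrak{b})$; unimodularity of $(p,a)$ says precisely $\mathfrak{b}+aA=A$, so we may assume $\mathfrak{b}\neq A$ (otherwise $q=0$ works), on $V(\mathfrak{b})$ the element $a$ is a unit, and $p\in\mathfrak{b}P$. Over $\ol A=A/\mathfrak{b}$ the module $\ol A\ot P$ is projective of the same rank as $P$, which exceeds $d$ and hence exceeds $\dd(\ol A/\CJ(\ol A))$ (the latter ring being a quotient of $A/\CJ(A)$), so Theorem \ref{serre} gives $\ol A\ot P\simeq Q'\op\ol A$; in particular $\ol A\ot P$ has a unimodular element, which we lift to some $q\in P$. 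Then $p+aq\equiv aq\pmod{\mathfrak{b}P}$, which modulo $\mathfrak{b}$ is a unit times a unimodular element, so $p+aq$ is already unimodular at every prime of $V(\mathfrak{b})$. What remains is to choose $q$ so that, at the same time, $p+aq$ does not lose basicness on $\Spec A\setminus V(\mathfrak{b})$, where $p$ already is basic; I would organise this as a nested induction on the dimension of $V(\mathfrak{b})\cap\mathrm{Max}\,A$ (equivalently on $\dd(A/(\mathfrak{b}+\CJ(A)))$, which is $\le d$): one only has to repair $p$ on the finitely many primes $\mathfrak{q}_1,\dots,\mathfrak{q}_s$ carrying the top-dimensional stratum of $V(\mathfrak{b})$, one takes the correction $q$ inside a suitable ideal so that $p$ is undisturbed off this stratum, and then the ring left after deleting $\mathfrak{q}_1,\dots,\mathfrak{q}_s$ has strictly smaller non-basic locus, so the inductive hypothesis finishes the argument.

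The main obstacle is precisely this last step: producing a single $q$ that makes $p+aq$ basic where $p$ is not and keeps it basic where $p$ is. A ``generic choice of $q$'' argument fails over finite residue fields, where an affine space can be a finite union of proper cosets. Bass's device is to isolate at each stage only the finitely many top-dimensional primes $\mathfrak{q}_1,\dots,\mathfrak{q}_s$ of the non-basic locus, so that the splicing becomes an honest prime-avoidance problem with only $s$ cosets to avoid, and the strict inequality $\mathrm{rank}(P)>\dd A/\CJ(A)$ --- not mere equality --- is exactly what guarantees there is always room to avoid them, residue field by residue field, just as in the base case. Carrying out this dimension bookkeeping and prime avoidance carefully --- rather than the transvection chain or the appeal to Serre's theorem --- is where essentially all the work of the theorem lies.
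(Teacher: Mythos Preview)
The paper does not prove this theorem: it is stated in the Preliminaries section as a classical result of Bass, with a citation to \cite{Bass}, and no argument is given. So there is no ``paper's own proof'' to compare your proposal against.

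As for your proposal itself, the reduction to $(\dagger)$ via the three-step transvection chain is correct and standard, and your base case $d=0$ is fine. The inductive step, however, is only a sketch: you correctly identify that the heart of the matter is to correct $p$ on the top-dimensional primes of the non-basic locus without disturbing basicness elsewhere, and you correctly attribute this to Bass's prime-avoidance device --- but you do not actually carry it out. In particular, the phrase ``one takes the correction $q$ inside a suitable ideal so that $p$ is undisturbed off this stratum'' hides the real work: one must show that the correction can be chosen in $\bigcap_j\mathfrak{q}_j'$ (the intersection of the \emph{other} top-dimensional primes of the basic locus one wishes to preserve) while still hitting a prescribed nonzero target modulo each $\mathfrak{q}_i$, and that after this correction the new order ideal $O_P(p+aq)$ is strictly larger (not contained in any $\mathfrak{q}_i$), so that the dimension of the non-basic locus genuinely drops. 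Your final paragraph candidly acknowledges that this bookkeeping is ``where essentially all the work of the theorem lies,'' which is accurate --- but it also means your proposal is an outline rather than a proof. If you want a complete argument, you should either fill in this induction explicitly or simply cite Bass as the paper does.
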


The following result is about lifting of automorphisms \cite[4.1]{br}.

\begin{proposition}\label{b-roy}
Let $I$ be an ideal of a ring $A$ and let $P$ be a projective
$A$-module. Then any transvection $\phi$ of $P/IP$ (i.e. $\phi\in
{\mathcal E}(P/IP)$) can be lifted to an automorphism $\Phi$ of $P$.
\end{proposition}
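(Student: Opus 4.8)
\medskip
\noindent\textbf{Proof idea.} The plan is to cut the problem down to a single transvection and then build the lift by hand. Since ${\mathcal E}(P/IP)$ is generated by transvections and a product of automorphisms of $P$ lifting transvections $\tau_{1},\dots,\tau_{m}$ lifts $\tau_{1}\cdots\tau_{m}$, it suffices to lift one transvection $\phi$. Write $\phi = 1 + \ol{a}\otimes\ol{\alpha}$, that is $\phi(\ol{x}) = \ol{x} + \ol{\alpha}(\ol{x})\,\ol{a}$, with $\ol{a}\in P/IP$, $\ol{\alpha}\in (P/IP)^{*}$, $\ol{\alpha}(\ol{a}) = 0$, and either $\ol{\alpha}$ unimodular in $(P/IP)^{*}$ or $\ol{a}$ unimodular in $P/IP$. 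These two cases are interchanged by passing to $P^{*}$ (again finitely generated projective, with $(P/IP)^{*} = P^{*}/IP^{*}$ and $P^{**}=P$), the dual of $1+\ol{a}\otimes\ol{\alpha}$ being the transvection $1+\ol{\alpha}\otimes\ol{a}$ of $P^{*}/IP^{*}$; so I would assume $\ol{\alpha}\colon P/IP\surj A/I$ is onto.

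\medskip
For the construction, first lift $\ol{\alpha}$ to some $\alpha_{0}\in P^{*}=\Hom(P,A)$, which is possible because the reduction $\Hom(P,A)\to\Hom(P/IP,A/I)$ is onto for $P$ projective. Put $J=\alpha_{0}(P)\subseteq A$. Since $\ol{\alpha}$ is onto, $I+J=A$, hence $I\cap J=IJ$; this is exactly the leverage needed to push a lift of $\ol{a}$ into $\ker\alpha_{0}$. Concretely, pick any $p\in P$ reducing to $\ol{a}$ modulo $IP$; then $\alpha_{0}(p)\in I\cap J = I\alpha_{0}(P)$, so $\alpha_{0}(p)=\sum_{k}i_{k}\alpha_{0}(p_{k})$ with $i_{k}\in I$, $p_{k}\in P$, and $q:=p-\sum_{k}i_{k}p_{k}$ again reduces to $\ol{a}$ modulo $IP$ but now satisfies $\alpha_{0}(q)=0$. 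Then $\Phi:=1+q\otimes\alpha_{0}$, that is $\Phi(x)=x+\alpha_{0}(x)\,q$, is the desired lift: because $\alpha_{0}(q)=0$ the operator $q\otimes\alpha_{0}$ is square-zero, so $\Phi\in\Aut(P)$ with inverse $1-q\otimes\alpha_{0}$, and reducing modulo $I$ it becomes $\ol{x}\mapsto\ol{x}+\ol{\alpha}(\ol{x})\,\ol{a}=\phi(\ol{x})$.

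\medskip
The one genuine obstacle is resisting the urge to overreach in the choice of lifts. The instinct is to lift $\ol{\alpha}$ to a \emph{surjective} $\alpha\in P^{*}$, or $\ol{a}$ to a \emph{unimodular} element of $P$, and then take the corresponding transvection of $P$; but this is impossible in general --- for instance if $P$ is the tangent module of the real $2$-sphere then $\Um(P)=\Um(P^{*})=\varnothing$, whereas $P/\mathfrak{m}P$ is free of rank $2$ and carries many transvections. The thing to recognize is that $\Phi$ need not be a transvection of $P$ at all: for $\Phi=1+q\otimes\alpha_{0}$ to be invertible one only needs $\alpha_{0}(q)=0$, and the comaximality $\alpha_{0}(P)+I=A$, which comes for free from surjectivity of $\ol{\alpha}$, already suffices to arrange this. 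The remaining ingredients (the duality reduction, and the fact that $\Hom$ and transposition commute with reduction modulo $I$ for finitely generated projective modules) are routine.
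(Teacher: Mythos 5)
Your proof is correct: reducing to a single transvection, lifting the unimodular functional $\ol{\alpha}$ to $\alpha_0\in P^*$, using the comaximality $\alpha_0(P)+I=A$ to correct a lift of $\ol{a}$ into $\ker\alpha_0$, and observing that the resulting unipotent map $1+q\otimes\alpha_0$ is an automorphism (with the case of unimodular $\ol{a}$ handled by dualizing) is exactly the standard argument. Note that the paper itself gives no proof of this statement --- it is quoted as Proposition 4.1 of Bhatwadekar--Roy \cite{br} --- and your argument is essentially the one in that reference, so there is nothing to reconcile beyond the (correctly handled) point that the lift need only be an automorphism of $P$, not a transvection.
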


The following result is due to Eisenbud and Evans \cite{ee}.

\begin{theorem}\label{EE}
Let $A$ be a ring and let $P$ be a projective $A$-module of rank
$n$. Let $(\ga,a)\in (P^*\op A)$. Then there exists $\gb\in P^*$ such
that $\hh I_a \geq n$, where $I=(\ga+a\gb)(P)$. In
particular, if the ideal $(\ga(P),a)$ has height $\geq n$ and $I$ is a
proper ideal of $A$, then $\hh I =n$.
\end{theorem}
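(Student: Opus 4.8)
The plan is to reformulate the height condition as a ``general position'' statement about the functional $\alpha+a\beta$ and then to achieve it by a prime-avoidance argument, exploiting that modulo a prime not containing $a$ one can move $\alpha$ by $a\beta$ to a nonzero residue.

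First I would record two elementary reductions. Since $P$ is projective, $P^{*}/\p P^{*}\cong (P/\p P)^{*}$ for every prime $\p$, so $(\alpha+a\beta)(P)\subseteq\p$ if and only if $\alpha+a\beta\in\p P^{*}$. Hence, writing $I=(\alpha+a\beta)(P)$, the conclusion $\hh I_{a}\ge n$ amounts to saying that $\alpha+a\beta\notin\p P^{*}$ for every prime $\p$ with $a\notin\p$ and $\hh\p\le n-1$. Secondly, from $\alpha(p)=(\alpha+a\beta)(p)-a\beta(p)$ and $(\alpha+a\beta)(p)=\alpha(p)+a\beta(p)$ one gets $I+aA=(\alpha(P),a)$; so, granting the main part, every minimal prime $\p$ over $I$ has height $\ge n$ (if $a\in\p$ then $\p\supseteq(\alpha(P),a)$; if $a\notin\p$ this is the main part), while $I$ is locally generated by $\le n$ elements since $P$ has rank $n$, whence $\hh I=n$. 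This is the ``in particular'' clause.

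For the main part, set $\mathcal{S}=\{\,\p\in\Spec A : a\notin\p,\ \hh\p\le n-1\,\}$; the goal is a single $\beta\in P^{*}$ with $\alpha+a\beta\notin\p P^{*}$ for all $\p\in\mathcal{S}$. The crucial slack is that for $\p\in\mathcal{S}$ the element $a$ is a unit in $A_{\p}$, so $aP^{*}_{\p}=P^{*}_{\p}$; in particular the ``bad'' $\beta$, i.e.\ those with $\alpha+a\beta\in\p P^{*}$, are exactly those whose residue modulo $\p P^{*}$ lies in a fixed small set --- at most a single residue class, since $A/\p$ is a domain --- so a good $\beta$ certainly exists for each individual $\p$, and the only real issue is to make one choice serve all of $\mathcal{S}$ simultaneously. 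I would carry this out by a prime-avoidance argument of Forster--Swan / Eisenbud--Evans type: build $\beta$ in finitely many stages, and at each stage use Noetherian induction --- processing primes roughly in order of height, noting that at each height only finitely many primes can obstruct the current partial choice --- to reduce to correcting $\beta$ at finitely many primes by adding an element of $P^{*}$ lying in those primes, arranged so as not to create new failures at primes of height $\le n-1$.

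The step I expect to be the genuine obstacle is precisely this reduction of the infinite family $\mathcal{S}$ to a finite one at each stage, together with the bookkeeping that keeps every prime of height $\le n-1$ under control through the successive corrections; this is where Noetherianity enters essentially, and it is the technical heart of the Eisenbud--Evans method. By contrast, the two reductions above and the final basic-element manoeuvre, which uses only the local identity $aP^{*}_{\p}=P^{*}_{\p}$, are routine once the finite family of primes has been isolated.
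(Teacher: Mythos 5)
Your preliminary reductions are fine: for a finitely generated projective $P$ one has $P^*\otimes A/\p\cong (P/\p P)^*$, so $\hh I_a\ge n$ is indeed equivalent to $\alpha+a\beta\notin \p P^*$ for every prime $\p$ with $a\notin\p$ and $\hh \p\le n-1$; and your deduction of the ``in particular'' clause from the main part, via $I+aA=(\alpha(P),a)$ together with Krull's height theorem, is complete and correct. Note also that the paper itself gives no proof of this statement: it is quoted as a known theorem of Eisenbud--Evans \cite{ee}, so the benchmark is the classical basic-element argument.

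For the main part, however, what you have written is a plan rather than a proof, and the step you defer is not routine bookkeeping but the entire content of the theorem. The per-prime observation (the bad residues of $\beta$ form at most one coset of $\p P^*$, since $a$ acts injectively on the torsion-free module $P^*/\p P^*$) is correct, but the inductive scheme you sketch --- ``process primes in order of height; at each height only finitely many primes obstruct'' --- fails as stated at exactly the point you flag. Concretely: if no prime of height $<k$ avoiding $a$ contains $I_\beta=(\alpha+a\beta)(P)$, then the offending primes of height $k$ are finitely many (such a prime must be minimal over $I_\beta$), and a prime-avoidance choice of $\delta$ with $\delta\notin\p_i P^*$ makes $\alpha+a\beta+a\delta$ basic at each of them. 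But the new order ideal $(\alpha+a\beta+a\delta)(P)$ is not comparable with $I_\beta$, so primes of height $<k$ that were harmless before may now contain it; the only case your setup controls is a new bad prime $\mathfrak r$ with $\delta(P)\subseteq\mathfrak r$, for then $I_\beta\subseteq \mathfrak r$ gives a contradiction. Preventing the general case --- by choosing the correction inside suitable intersections of the primes already in play, or by running the induction with a generalized dimension function and Noetherian induction as in Eisenbud--Evans and Plumstead --- is precisely the machinery your proposal omits and acknowledges as ``the genuine obstacle.'' As written, therefore, the argument has a gap at its core; to close it you would have to carry out that inductive construction in detail, or simply cite \cite{ee} as the paper does.
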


The next two results are due to Bhatwadekar and Raja Sridharan.

\begin{lemma}\label{2.11} \cite[2.11]{brs2}
Let $A$ be a ring and let $I$ be an ideal of
$A$.  Let $I_1$ and $I_2$ be ideals of $A$ contained in $I$ such that
$I_2\subset I^2$ and $I_1+I_2=I$.  Then $I=I_1+(e)$ for some $e\in
I_2$ and $I_1=I\cap I'$, where $I_2+I'=A$.  
\end{lemma}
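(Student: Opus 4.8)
The plan is to pass to the quotient ring $\ol{A}=A/I_1$ and use the hypothesis $I_2\subseteq I^2$ to see that the image of $I_2$ there is generated by an idempotent; everything then follows by lifting and by the idempotent decomposition of $\ol{A}$.

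First I would record the purely formal inclusion $I^2=(I_1+I_2)^2\subseteq I_1+I_2^2$, which combined with $I_2\subseteq I^2$ gives $I_2\subseteq I_1+I_2^2$. Writing $\ol{(\,\cdot\,)}$ for reduction modulo $I_1$ and noting $\ol{I}=\ol{I_2}$ (because $I=I_1+I_2$), this says precisely that $\ol{I_2}$ is an ideal of $\ol{A}$ with $\ol{I_2}=\ol{I_2}^{\,2}$. Since $A$ is Noetherian, $\ol{I_2}$ is finitely generated, so by the standard determinant/Nakayama argument there is $\ol{e}\in\ol{I_2}$ with $(1-\ol{e})\,\ol{I_2}=0$; then $\ol{e}^{\,2}=\ol{e}$ and $\ol{I_2}=(\ol{e})$.

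Next I would lift: pick $e\in I_2$ with $e\equiv\ol{e}\pmod{I_1}$. Since $\ol{I}=\ol{I_2}=(\ol{e})$, every $x\in I$ satisfies $x-ae\in I_1$ for a suitable $a\in A$, so $I=I_1+(e)$ with $e\in I_2$, which is the first assertion. For the second, set $I'=I_1+A(1-e)$. Because $e\in I_2$, the sum $I_2+I'$ contains $e+(1-e)=1$, hence $I_2+I'=A$. It remains to prove $I\cap I'=I_1$; the inclusion $\supseteq$ is immediate. For $\subseteq$, reduce modulo $I_1$: the element $\ol{e}$ is idempotent, so $\ol{A}\cong\ol{A}\ol{e}\times\ol{A}(1-\ol{e})$, and under this decomposition $\ol{I}=(\ol{e})$ and $\ol{I'}=(1-\ol{e})$ are the two complementary direct factors, whose intersection is $0$. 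Thus any $x\in I\cap I'$ has $\ol{x}=0$, i.e.\ $x\in I_1$, completing the argument.

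The only step that is not mere bookkeeping is the passage ``$J$ finitely generated and $J=J^2$ $\Rightarrow$ $J$ is generated by an idempotent'', but this is the classical determinant-trick lemma, so I do not expect any genuine obstacle; the rest is just careful use of the two inclusions $I_2\subseteq I^2\subseteq I_1+I_2^2$ and of the idempotent decomposition of $A/I_1$.
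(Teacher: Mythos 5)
Your argument is correct: the inclusion $I^2\subseteq I_1+I_2^2$ makes the image of $I_2$ (equivalently of $I$) an idempotent ideal of $A/I_1$, the determinant trick produces the idempotent generator $\bar e$, and the choices $e\in I_2$ lifting $\bar e$ and $I'=I_1+(1-e)$ give exactly $I=I_1+(e)$, $I_2+I'=A$ and $I\cap I'=I_1$. The paper itself offers no proof of this lemma (it is quoted from \cite{brs2}), and your reduction-mod-$I_1$/idempotent argument is essentially the standard proof given in that source, so there is nothing to add.
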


\begin{lemma}\label{BRS}\cite[3.5]{brs1}
Let $A$ be a regular domain containing a field and let $I$ be an ideal
of $A[T]$. Let $P$ be a projective $A$-module and $J=I\cap A$. Let
$\phi :P[T] \surj I/(I^2T)$ be a surjection. Suppose $\phi\ot
A_{1+J}[T]$ can be lifted to a surjection $\psi :P_{1+J}[T]\surj
I_{1+J}$. Then $\phi$ can be lifted to a surjection $\Phi :P[T]\surj
I$.
\end{lemma}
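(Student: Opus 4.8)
I would prove this by a local--global patching argument: first lift $\phi$ over each of two comaximal principal localisations of $A$, then glue the two lifts.

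\emph{Two charts.} Clearing denominators in the hypothesis: the surjection $\psi:P_{1+J}[T]\surj I_{1+J}$, together with the finitely many relations expressing that it lifts $\phi_{1+J}$ and is onto, is defined over $A_s[T]$ for a suitable $s\in 1+J$; so after enlarging $s$ within $1+J$ we obtain a surjection $\Theta:P_s[T]\surj I_s$ with $\Theta\equiv\phi$ modulo $I_s^2T$. For the complementary chart set $t=s-1$; since $s\in 1+J$ we have $t\in J=I\cap A\subseteq I$, so over $A_t$ the ideal $I_t=A_t[T]$ is the whole ring. Hence $\phi_t:P_t[T]\surj A_t[T]/(T)$ factors as a surjection $\ol\phi:P_t\surj A_t$ followed by $T\mapsto0$; splitting $P_t\simeq A_t\op Q_t$ and projecting $P_t[T]=A_t[T]\op Q_t[T]\surj A_t[T]=I_t$ gives a surjection $\Psi:P_t[T]\surj I_t$ lifting $\phi_t$ (here $I_t^2T=(T)$, so ``lifting $\phi_t$'' only means ``inducing $\ol\phi$ at $T=0$'', which holds by construction). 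Since $(s)+(t)=A$, we now have lifts of $\phi$ over the comaximal localisations $A_s[T]$ and $A_t[T]$.

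\emph{Patching.} Over $A_{st}[T]$ the element $t$ is invertible and lies in $I$, so $I_{st}=A_{st}[T]$ is the whole ring, and $\Theta,\Psi$ become two surjections $P_{st}[T]\surj A_{st}[T]$ which, each agreeing with $\phi$ modulo $(T)$, agree with each other at $T=0$. Because $A$ is a regular ring containing a field, finitely generated projective modules over $A[T]$ are extended from $A$ (Lindel, together with Popescu's approximation in the general case); thus the kernels of these two split surjections, being projective $A_{st}[T]$-modules that coincide at $T=0$, coincide. From this one produces $\sigma\in\Aut(P_{st}[T])$ with $\Psi=\Theta\circ\sigma$ and $\sigma|_{T=0}=\mathrm{id}$, and, using Proposition \ref{b-roy}, one may take $\sigma$ to be a product of transvections. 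By Quillen's local--global principle such a $\sigma$ splits as $\sigma=\sigma_1\sigma_2$ with $\sigma_1\in\Aut(P_s[T])$ and $\sigma_2\in\Aut(P_t[T])$, after replacing $s,t$ by suitable powers (which remain comaximal). Then $\Theta\circ\sigma_1$ over $A_s[T]$ and $\Psi\circ\sigma_2^{-1}$ over $A_t[T]$ agree on the overlap, and, granting the adjustment discussed below, each lifts $\phi$; so they glue to a homomorphism $\Phi:P[T]\to I$, which is surjective (being so after inverting each of the comaximal $s,t$) and lifts $\phi$.

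\emph{Main obstacle.} The crux is the patching: producing the comparison of the two local lifts as an \emph{honest} automorphism $\sigma$ of $P_{st}[T]$ (not merely one defined after adding a free summand) that is the identity at $T=0$ and is ``small'' enough modulo $I^2T$ that its Quillen-split factors $\sigma_1,\sigma_2$ leave $\Theta\circ\sigma_1$ and $\Psi\circ\sigma_2^{-1}$ congruent to $\phi$ modulo $I^2T$. This is exactly where the hypotheses ``$A$ regular'' and ``$A$ containing a field'' are used --- through the extendedness of projective $A[T]$-modules and through Proposition \ref{b-roy}. The remaining steps --- clearing denominators, the trivial chart over $A_t[T]$, and glueing two maps that already agree --- are routine.
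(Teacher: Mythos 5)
Note first that the paper does not prove this lemma at all: it is quoted verbatim from Bhatwadekar--Sridharan \cite[3.5]{brs1} as a preliminary, so there is no in-paper argument to compare with; your attempt has to stand on its own, and it does not yet do so. The outline up to the patching step is fine (descent of $\psi$ to some $A_s[T]$ with $s\in 1+J$, the trivial chart $A_t[T]$ with $t=s-1\in J\subset I$ where $I_t=A_t[T]$, comaximality of $s,t$), but the decisive step is exactly where the argument breaks, and you only defer it (``granting the adjustment discussed below'') without ever supplying the adjustment; the ``Main obstacle'' paragraph restates the difficulty rather than resolving it. Concretely: over the overlap $A_{st}[T]$ the ideal is the unit ideal, so ``lifting $\phi$'' there degenerates to agreement at $T=0$, and consequently the comparison automorphism $\sigma$, and the Quillen-split factors $\sigma_1\in\Aut(P_s[T])$, $\sigma_2\in\Aut(P_t[T])$, carry no information beyond $\sigma_i(0)=\mathrm{id}$. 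Writing $\sigma_1=\mathrm{id}+T\nu$, one gets $\Theta\circ\sigma_1=\Theta+T(\Theta\circ\nu)$ with $\Theta\circ\nu$ landing in $I_s$, so the glued map agrees with $\phi$ on the $A_s$-chart only modulo $I_sT$, whereas being a lift of $\phi:P[T]\surj I/(I^2T)$ requires agreement modulo $I_s^2T$. Nothing in the Quillen splitting lemma lets you choose $\sigma_1$ congruent to the identity modulo $(I_s^2T)$, and post-correcting on the $s$-chart alone would destroy agreement on the overlap. This loss of the mod-$I^2$ data under naive Quillen patching is precisely why the known proofs of statements of this type (in \cite{brs1}, and likewise the related Lemma \ref{4.6} quoted from \cite{bk}) have to work harder with the ideal itself (moving-lemma/subtraction-type manipulations or Mandal-type liftings) rather than only with automorphisms of $P_{st}[T]$ that are trivial at $T=0$; so what you have labelled an ``adjustment'' is in fact the substance of the lemma.

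Two smaller points. The appeal to Proposition \ref{b-roy} to make $\sigma$ ``a product of transvections'' is misplaced: that proposition lifts transvections of $P/IP$ to automorphisms of $P$ and neither yields nor is needed for such a factorization (Quillen splitting applies to any automorphism of $P_{st}[T]$ that is the identity at $T=0$). Also, the construction of $\sigma$ itself (isomorphic kernels via Lindel--Popescu extendedness, compatible splittings so that $\sigma(0)=\mathrm{id}$) is plausible but is asserted rather than carried out; that part is repairable, unlike the congruence issue above.
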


The following result is due to Mandal and Raja Sridharan \cite[2.3]{mrs}.

\begin{theorem}\label{mrs}
Let $A$ be a ring and let $I_1,I_2$ be two comaximal ideals of
$R=A[T]$. Assume that $I_1$ contains a monic polynomial and $I_2$ is
an extended ideal from $A$, i.e. $I_2=I_2(0)R$. Let $P$ be a
projective $A$-module of rank $r \geq \dd (R/I_1) +2$ and let
$I=I_1\cap I_2$. Let $\rho : P\surj I(0)$ and $\gd : P[T]/I_1P[T]
\surj I_1/I_1^2$ be two surjections such that $\gd(0)=\rho \ot
A/I_1(0)$. Then there exists a surjection $\eta:P[T] \surj I$ such
that $\eta(0)=\rho$.
\end{theorem}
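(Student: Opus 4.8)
The plan is to reduce to the situation where Theorem~\ref{mrs} applies by introducing a monic polynomial into the picture via a standard "add a generic coordinate" trick, and then to patch along comaximal pieces. Let $J = I \cap A$, so that $\ol\varphi(0) : P \surj I(0)$ gives us, by Eisenbud--Evans (Theorem~\ref{EE}), control on heights. First I would observe that by Lemma~\ref{BRS}, since $A$ is a regular domain containing a field, it suffices to lift $\varphi$ after going to $A_{1+J}[T]$; thus we may localize and assume $J$ is contained in the Jacobson radical of $A$. In this localized situation $\dd A/J < \dd A$, which improves the numerical bound $n + \hh I \geq \dd A[T] + 2 = \dd A + 3$ relative to the residue data, and in particular $n > \dd(A[T]/I)$ in the relevant quotients; this is precisely the regime where the results of \cite{bk} (as extended in \cite{d4}) on Question~\ref{open} are available.

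The core step is then to split $I$ as $I = I_1 \cap I_2$ where $I_1$ contains a monic polynomial and $I_2$ is extended from $A$, so that Theorem~\ref{mrs} produces the desired surjection. Concretely, I would proceed as follows. Using the surjection $\ol\varphi : P[T] \surj I/(I^2 T)$, lift the generators to an element of $I$ generating $I$ modulo $I^2 T$; by Lemma~\ref{2.11} (applied with the ideal $I^2 T \subset I^2 \subset I$) write $I = I_1 + (e)$ for a suitable $e \in I^2 T$, with $I_1 = I \cap I'$ where $I_2 := I' + (e)$... More carefully, the standard device is to use a generic element $\Theta \in P[T]$ lifting $\ol\varphi$ modulo $I^2 T$, set $I_1 = \Theta(P[T]) + I^2 T$ (or an appropriate height-$n$ ideal obtained from Theorem~\ref{EE} applied to $\Theta$), and arrange, after a translation $T \mapsto T + \lambda$ for suitable $\lambda$ or after a general position argument, that $I_1$ contains a monic polynomial in $T$. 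Then $I = I_1 \cap I_2$ with $I_1 + I_2 = A[T]$, $I_2$ extended from $A$ (since everything outside the monic locus is extended), and $\dd(A[T]/I_1) \leq \dd A - 1 < n$, so $n \geq \dd(A[T]/I_1) + 2$. Applying Theorem~\ref{mrs} with $\rho = \ol\varphi(0) : P \surj I(0)$ and $\gd$ the induced surjection $P[T]/I_1 P[T] \surj I_1/I_1^2$ (whose value at $0$ agrees with $\rho$ modulo $I_1(0)$ by construction), we get $\eta : P[T] \surj I$ with $\eta(0) = \ol\varphi(0)$. Finally, one checks that $\eta$ can be adjusted by a suitable automorphism of $P[T]$ fixing the quotient $I/I^2 T$ so that $\eta$ actually lifts $\ol\varphi$ itself, not merely agreeing with it at $T = 0$ — this uses that the set of lifts of $\ol\varphi(0)$ to surjections $P[T] \surj I$ and the set of lifts of $\ol\varphi$ are matched up via the homotopy/$\BA^1$-interpolation techniques, invoking \cite{bk}.

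The step I expect to be the main obstacle is producing the splitting $I = I_1 \cap I_2$ with $I_1$ monic \emph{and} with the numerical hypothesis $n \geq \dd(A[T]/I_1) + 2$ genuinely satisfied — this is where the relaxed height condition $n + \hh I \geq \dd A[T] + 2$ (rather than $\hh I = n$) is both the difficulty and the point. When $\hh I$ is smaller than $n$, the ideal $I$ need not be "unmixed of the right height," and a naive application of Theorem~\ref{EE} to the generic lift of $\ol\varphi$ gives an ideal $I_1$ of height $n$ which may fail to equal a component of $I$ in the way Lemma~\ref{2.11} demands. The resolution will require carefully tracking the pair $(I_1, I_2)$ so that $I_1 \supset I$ modulo $I^2 T$, controlling $\dd(A[T]/I_1)$ via the inequality $\dd(A[T]/I_1) = \dd A[T] - \hh I_1 \leq \dd A[T] - \hh I \leq n - 2$, and invoking the "moving lemma" machinery of Section~3 to move $I_1$ into general position relative to the singular/monic loci; this is presumably exactly why the authors remark that they need these principles "in a more general set up than the existing ones." Once $I_1$ is correctly positioned, the rest is a formal assembly from Theorems~\ref{mrs}, \ref{EE} and Lemma~\ref{BRS} together with the Bhatwadekar--Keshari theorem.
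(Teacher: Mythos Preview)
You have written a proof for the wrong statement. The theorem labeled \texttt{mrs} is a result of Mandal and Raja Sridharan, quoted in the preliminaries from \cite[2.3]{mrs}; the paper does not prove it, and you are not being asked to prove it either. Your entire proposal is instead an attempt to prove the paper's main theorem (Theorem~\ref{4.13}): you speak of a surjection $\ol\varphi:P[T]\surj I/(I^2T)$, of the bound $n+\hh I\geq \dd A[T]+2$, of invoking Lemma~\ref{BRS} to localize along $1+J$, and---most tellingly---you repeatedly \emph{apply} Theorem~\ref{mrs} as an ingredient. None of this has anything to do with establishing Theorem~\ref{mrs} itself.

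Even read charitably as a proof of Theorem~\ref{4.13}, the approach is off. The paper does not try to split $I=I_1\cap I_2$ with $I_1$ containing a monic and $I_2$ extended; there is no reason such a decomposition should exist for a general ideal $I\subset A[T]$ satisfying only the height inequality. Instead, the paper uses the moving lemma (\ref{moving}) to lift $\phi$ to a surjection onto $I\cap I'$ with $\hh I'=n$, applies \cite[4.13]{bk} to handle $I'$, passes to $A(T)$ and uses the subtraction principle (\ref{sub}) there, and finally descends via Proposition~\ref{4.9}. Theorem~\ref{mrs} enters only inside the proofs of the addition/subtraction principles and of (\ref{4.9}), where the relevant ideal is \emph{constructed} to contain a monic (e.g.\ $(K,Y+a_2)$ or $(H,T^2+(b-2)T+1)$); it is never applied directly to $I$.
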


%$$$$$$$$$$$$$$$$$$$$$$$$$$$$$$$$$$$$$$$$$$$

\section{Main Theorem}
{\bf Unless otherwise mentioned, by a ring we mean a commutative
Noetherian ring}.

We  begin with a lemma. When $\hh I=n$ and $f=1$, it
follows from \cite[5.5]{bk}.

\begin{lemma}\label{moving} 
(Moving lemma) Let $J$ be an ideal of a ring $A$ and let
$P$ be a projective $A$-module of rank $n \geq \dd A/J +1$. Let
$\gt:P \surj J/J^2f$ be a surjection for some $f\in A$. Given any
ideal $K \subset A$ with $\dd A/K \leq n-1$, $\gt$ can be
lifted to a surjection $\gT : P \surj J''$ such that
 
$(i)$ $J'' + (J^2\cap K)f=J$,

$(ii)$ $J''=J\cap J'$, where $\hh J' \geq n$ and

$(iii)$ $J' +(J^2\cap K)f=A$.
\end{lemma}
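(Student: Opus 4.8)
The plan is to follow the standard ``moving lemma'' strategy of \cite{bk}, adapted to the extra factor $f$ and to the weaker hypothesis $\hh I = n$ being replaced by the rank bound $n \geq \dd A/J + 1$. First I would reduce to the case where $J$ itself is a proper ideal (if $J = A$ there is nothing to prove, taking $J'' = A$). Next I would use the surjection $\gt : P \surj J/J^2f$ together with a generalized dimension argument to produce a lift to a surjection onto a subideal: since $\text{rank}(P) = n \geq \dd A/J + 1 > \dd A/J$, I can apply Serre's splitting theorem (Theorem~\ref{serre}) after passing to $A/J^2f$ to write down, via prime avoidance over the finitely many associated primes of $A/J^2f$ together with the minimal primes of $K$, an element that realizes $\gt$ on a suitable quotient; concretely, lift $\gt$ arbitrarily to $\gt_1 : P \to J$ with $\gt_1(P) + J^2f = J$, and set $J_1 = \gt_1(P)$ so that $J_1 + J^2f = J$ and, by Lemma~\ref{2.11} applied with $I_1 = J_1$, $I_2 = $ a suitable element of $J^2 f$, we get $J = J_1 + (e)$ with $e \in J^2 f$ and $J_1 = J \cap J_1'$ with $J_1' + (e) = A$.

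The heart of the argument is then the ``moving'': I must replace $J_1$ (equivalently its complementary ideal $J_1'$, which may have small height or bad behaviour relative to $K$) by an ideal $J'$ of height $\geq n$ that is moreover comaximal with $(J^2 \cap K)f$. Here is where I would invoke the Eisenbud--Evans theorem (Theorem~\ref{EE}): working with the projective module $P$ of rank $n$ and the element $(\ga, a) \in P^* \oplus A$ coming from the surjection $\gt_1$ read off modulo the ideal $(J^2\cap K)f$ (so that $a$ is chosen inside $(J^2 \cap K)f$ and $\ga$ induces $\gt$ on $P/J^2f$), I produce $\gb \in P^*$ so that the ideal $J'' = (\ga + a\gb)(P)$ has the property that $J''_a$ has height $\geq n$. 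The construction is arranged so that $J'' \subseteq J$, $J''$ still lifts $\gt$ (because $a \in J^2 f$), and $J'' + (J^2 \cap K)f = J$, which is (i). Writing $J'' = J \cap J'$ via Lemma~\ref{2.11} once more (using $J'' + J^2 f \supseteq$ enough), the complementary ideal $J'$ satisfies $J' + (J^2 \cap K)f = A$, which is (iii); and the Eisenbud--Evans height estimate localized away from $a \in (J^2\cap K)f$ forces $\hh J' \geq n$, giving (ii).

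The main obstacle I anticipate is bookkeeping the interplay of the three ideals $J^2f$, $K$, and $(J^2 \cap K)f$ simultaneously: one needs the chosen element $a$ (or the chosen $\gb$) to lie in $(J^2 \cap K)f$ in order to get the comaximality statements (i) and (iii) with that precise ideal, while at the same time the Eisenbud--Evans output must be applied to an ideal that is genuinely generated by $n$ elements locally and whose height can be controlled only after inverting $a$. The dimension condition $\dd A/K \leq n-1$ is exactly what lets prime avoidance succeed when choosing $\gb$ to dodge the minimal primes of $J'$ of small height over $\Spec(A/K)$; I expect the delicate point to be checking that this avoidance is compatible with the constraint that $\gb$ be chosen so $a\gb$ stays inside the small ideal $(J^2\cap K)f$, which may require first replacing $f$ by a suitable power or multiplying through by an element comaximal with the relevant primes. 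Once these choices are made consistently, properties (i)--(iii) fall out formally from Lemma~\ref{2.11} and the height estimate in Theorem~\ref{EE}. \bbox
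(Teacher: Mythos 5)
There is a genuine gap, and it sits exactly at the point you label as ``the heart'' of the argument. Starting from an arbitrary lift $\gt_1:P\ra J$ of $\gt$ you only know $\gt_1(P)+J^2f=J$; Lemma~\ref{2.11} then produces an element $e\in J^2f$, \emph{not} an element of $(J^2\cap K)f$. Your plan then asserts that one can take the Eisenbud--Evans datum $(\ga,a)$ with ``$a$ chosen inside $(J^2\cap K)f$'' and with $J''+(J^2\cap K)f=J$ arranged ``by construction'', but this is precisely what has to be proved: in general the image of $(J^2\cap K)f$ in $J/\gt_1(P)$ need not generate, so no such $a$ exists for the initial lift, and properties (i) and (iii) cannot be extracted. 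The paper closes this gap with a first, separate application of (\ref{EE}) \emph{modulo} $(J^2\cap K)f$: one picks $b\in J^2f$ with $\gD(P)+(b)=J$, replaces $\gD$ by $\gD+b\gD_1$ (still a lift of $\gt$ since $b\in J^2f$) so that $\hh \ol N_{\ol b}\geq n$ where $N=\gD(P)$ and bar is reduction mod $(J^2\cap K)f$, writes $N=J\cap J_1$ with $J_1+(b)=A$, and then uses that $b\in(f)$ (so $J_1$ is comaximal with $f$ and $\hh\ol J_1=\hh(\ol J_1)_{\ol f}$) together with $\dd A/(J^2\cap K)\leq n-1$ to force $n\leq \hh\ol J_1\leq \dd \ol{A_f}\leq n-1$, a contradiction unless $\ol J_1=\ol A$, i.e.\ $\gD(P)+(J^2\cap K)f=J$. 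Only after this ``moving'' step can one choose $c\in(J^2\cap K)f$ with $\gD(P)+(c)=J$ and run the second Eisenbud--Evans application, which is the part of the argument you do describe correctly and which yields (ii) and (iii).

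Two smaller remarks. The difficulty you single out --- keeping $a\gb$ inside $(J^2\cap K)f$, possibly after replacing $f$ by a power --- is not an issue at all: once $a\in(J^2\cap K)f$, the correction $a\gb$ lies in $(a)$ for any $\gb\in P^*$, so no compatibility between the prime-avoidance choice of $\gb$ and the ideal constraint is needed. Likewise the appeal to Serre's splitting theorem over $A/J^2f$ plays no role; the hypothesis $\dd A/K\leq n-1$ (together with $\dd A/J\leq n-1$) enters only through the dimension bound $\dd A/(J^2\cap K)\leq n-1$ used in the height-versus-dimension contradiction above, not through a prime-avoidance choice of $\gb$.
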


\proof
Let $\gD : P\ra J$ be a lift of $\gt$. Then $\gD(P)+J^2f=J$. By
(\ref{2.11}), there exists $b\in J^2f$ such that $\gD(P)+(b)=J$. Let
``bar" denote reduction modulo the ideal $(J^2\cap K)f$. Note that
$\dd A/(J^2\cap K) \leq n-1$.

Applying  (\ref{EE}) on $(\ol \gD, \ol b)\in 
(\ol {P^*}\op \ol A)$, there exists $\gD_1 \in { P^*}$ such that if
$N=(\gD+b\gD_1)(P)$, then $\hh \ol N_{\ol b} \geq n$.  Since
$\gD\op b\gD_1$ is also a lift of $\gt$, replacing $\gD$ with
$\gD+b\gD_1$, we may assume that $N=\gD(P)$.

Now $N+(b)=J$ and $b\in J^2f$. By (\ref{2.11}), $N=J \cap J_1$, where
$J_1+(b)=A$.  Since $N_b=(J_1)_b$ and $\ol N=\ol J\cap \ol J_1$, we get $\hh
\ol J_1 = \hh  (\ol J_1)_{\ol b} = \hh \ol N_{\ol b} \geq n$. But
$n\leq \hh \ol J_1 = \hh (\ol J_1)_{\ol f} \leq \dd \ol {A_f} \leq
n-1$. Hence we get $\ol J_1=\ol A$. Therefore, $\ol N=\ol J$,
i.e.  $\gD(P)+(J^2\cap K)f =J$.

By (\ref{2.11}), there exists $c\in (J^2\cap K)f$ such that
$\gD(P)+(c)=J$. By (\ref{EE}), replacing $\gD$ by $\gD+c\gD_2$ for
some $\gD_2\in P^*$, we may assume that $\gD(P)=J\cap J'$, where
$\hh J' \geq n$ and $J'+(c)=A$. 
This proves the lemma.
$\hfill \square$

We now prove some ``addition" and ``subtraction" principles tailored
to suit our needs.

\begin{proposition}\label{sub}
(Subtraction Principle) Let $I,J$ be two comaximal ideals of a ring
$A$ and let $P=Q\op A$ be a projective $A$-module of rank $n$. Assume
that $n\geq \dd (A/J) +2$ and $n+ \hh I \geq \dd A+3$.  Assume that
$\Phi:P\surj I$ and $\Psi :P \surj I\cap J$ are two surjections such
that $\Phi\ot A/I=\Psi\ot A/I$. Then there exists a surjection
$\gD:P\surj J$ such that $\gD\ot A/J=\Psi \ot A/J$.
\end{proposition}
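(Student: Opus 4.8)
The plan is to follow the standard strategy for subtraction principles: reduce the surjection $\Psi:P\surj I\cap J$ to one that behaves well modulo $J$, then peel off the ideal $I$ using the comaximality of $I$ and $J$. First I would use the decomposition $P=Q\op A$ to write $\Psi=(\Psi_1,\psi)$ with $\Psi_1\in Q^*$ and $\psi\in A$ so that $\Psi_1(Q)+(\psi)=I\cap J$. Since $I+J=A$, one can choose the data so that $\psi\in I$ (adjusting by the splitting and using that $\Psi\ot A/I=\Phi\ot A/I$); the point is to arrange that the ``unimodular coordinate'' carries the part of the generating set that already lives in $I$, matching the behaviour of $\Phi$. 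I would then invoke the Eisenbud--Evans theorem (\ref{EE}) to modify $\Psi_1$ by a multiple of $\psi$ so that $K:=(\Psi_1+\psi\,\gD_1)(Q)$ is an ideal with $\hh K\geq n-1$ (or more precisely with good height on the relevant locus), while keeping $\Psi_1(Q)+(\psi)=I\cap J$ unchanged as a surjection.

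Next I would apply Lemma \ref{2.11} to the inclusions $I\cap J\supset$ (the new $\Psi_1(Q)$) and $(\psi)$: writing $\Psi_1(Q)=(I\cap J)\cap L$ with $L+(\psi)=A$, and using $\psi\in I$, we get that $L$ is comaximal with $I$, so $\Psi_1(Q)+I = L\cap J + \cdots$; the bookkeeping here should produce an ideal $L$ of height $\geq n$ that is comaximal with $J$ and whose intersection with $J$ is hit by $Q$. The key comaximality inputs are $I+J=A$ and the height hypothesis $n+\hh I\geq \dd A+3$, which guarantees (via (\ref{EE}) again, applied after reduction modulo $J$) that we can further move $L$ to be comaximal with everything relevant and of height exactly $n$ while staying inside the fibre over $A/J$ prescribed by $\Psi\ot A/J$. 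Once $P$ surjects onto an ideal of the shape $J\cap L$ with $L$ of height $\geq n$ and $L+J=A$, and we have a surjection $\Phi:P\surj I$ with $L+I=A$ as well, the classical addition/subtraction machinery (in the generalized height range here, this is where the hypothesis $n\geq\dd(A/J)+2$ enters, via Serre's splitting (\ref{serre}) and Bass's transitivity (\ref{bass})) lets us cancel: from $P\surj I$ and $P\surj J\cap L$ with $I,L$ both comaximal with $J$, one builds $P\surj J$ agreeing with $\Psi$ modulo $J$.

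Concretely, the final step is to set up an auxiliary surjection $P\surj I\cap L$ lifting the same data modulo $I$ as $\Phi$ (possible because $\hh L\geq n=\operatorname{rank}P$ forces $P\cong P_0\op A$ suitably and $I\cap L$ is then in the image), apply the already-known addition principle to $\Phi:P\surj I$ and $P\surj I\cap L$ to get $P\surj I\cap L$... and then compare with $\Psi':P\surj I\cap(J\cap L)$; dividing out $I$ by comaximality yields $P\surj J\cap L$, and then a second round with $L$ in the comaximal position produces the desired $\gD:P\surj J$ with $\gD\ot A/J=\Psi\ot A/J$. The main obstacle I anticipate is the control of heights after reduction modulo $J$: one must apply (\ref{EE}) in the quotient ring $A/J$, where $\dd(A/J)\leq n-2$, to force the moved ideal $L$ to be proper and of the correct height simultaneously comaximal with both $I$ and $J$; keeping track of which generator lies in which ideal (so that the matching conditions $\Phi\ot A/I=\Psi\ot A/I$ and the target $\Psi\ot A/J$ are preserved throughout all the (\ref{EE})-modifications) is the delicate bookkeeping, and the height inequality $n+\hh I\geq\dd A+3$ is exactly what makes it go through when $\hh I<n$.
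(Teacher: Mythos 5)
Your outline stalls exactly where the real work lies. After the preliminary moves with (\ref{serre}), (\ref{bass}), (\ref{EE}) and (\ref{2.11}) -- which are indeed the right opening and roughly match the paper -- you finish by invoking ``the classical addition/subtraction machinery'' to pass from $P\surj I\cap J$ (plus $P\surj I$) to $P\surj J$. But that cancellation step \emph{is} the statement being proved, and the previously known versions of it (e.g.\ in \cite{bk}, \cite{brs2}) require $\hh I=n$ or a condition of the type $2n\geq \dd A+3$; the whole point of this proposition is the weaker hypothesis $n+\hh I\geq \dd A+3$ with $\hh I$ possibly small. As written, the argument is either circular or appeals to results that do not exist in the needed generality. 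Note also that the addition principle (\ref{add}) is proved \emph{after} this proposition in the paper, so it cannot be quoted as ``already known.''

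The missing idea is the homotopy over the affine line via Mandal--Raja Sridharan (\ref{mrs}). The paper normalizes $\Phi$ (not $\Psi$): using (\ref{serre}), (\ref{bass}), (\ref{b-roy}) modulo $J^2$ and then (\ref{EE}), one writes $\Phi=(\Phi_2,a_2)$ with $a_2\equiv 1 \bmod J^2$, $K=\Phi_2(Q)$, $(K,a_2)=I$, and $\hh K_{a_2}\geq n-1$; the hypothesis $n+\hh I\geq \dd A+3$ is what gives $\dd A/K\leq n-3$ (resp.\ $n-2$ when $\hh I=n$), which is the rank condition needed next. Then one sets $\CI_1=(K,\,Y+a_2)$ (containing a monic in $Y$), $\CI_2=JA[Y]$ (extended), $\CI_3=\CI_1\cap\CI_2$, checks $\CI_1(0)=I$, $\CI_2(0)=J$, and uses the hypothesis $\Phi\ot A/I=\Psi\ot A/I$ precisely to verify the compatibility $\gd(0)=\Psi\ot A/\CI_1(0)$ demanded by (\ref{mrs}); that theorem produces $\eta:P[Y]\surj\CI_3$ with $\eta(0)=\Psi$, and specializing at $Y=1-a_2\in J^2$ yields $\gD:P\surj J$ with $\gD\ot A/J=\Psi\ot A/J$. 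Your proposal never constructs this homotopy, never arranges the normalization $a_2\equiv 1\bmod J^2$ that makes the specialization work, and uses the compatibility $\Phi\ot A/I=\Psi\ot A/I$ only as vague ``bookkeeping,'' so the proof as proposed does not go through.
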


\begin{proof}
As each of $I,J$ is locally generated by $n$ elements, we have $\hh
I\leq n$ and $\hh J\leq n$. Note that to prove the result we can
change $\Phi$ and $\Psi$ by composing it with automorphisms of $Q\op A$.

Let ``bar'' denote reduction modulo $J^2$ and we write $\Phi =
(\Phi_1,a_1)$, where $\Phi_1 \in Q^*$.  Then $(\ol \Phi_1,\ol a_1) \in
\Um(\ol Q\op \ol A)$. Since $\dd \ol A \leq n-2$, by 
(\ref{serre}), $\ol Q$ has a unimodular element i.e. $\ol Q=Q_1\op \ol
A$. Write $\ol \Phi_1 \in {\ol Q}^*$ as $(\ol \ga,\ol b_1)$, where $\ol \ga \in
Q_1^*$. By (\ref{bass}), there exists $\sigma \in {\mathcal E}(\ol Q\op
\ol A)$ such that $(\ol \ga,\ol b_1,\ol a_1)\sigma=(0,1,0)$.

Using (\ref{b-roy}), let $\gt \in \Aut (Q\op A)$ be a lift of
$\sigma$.  If $(\Phi_1,a_1)\gt =(\Phi_2,a_2)$, then $a_2\in J^2$ and
$\ol \Phi_2 \in \Um(\ol Q)$.  By (\ref{EE}), there exists
$\Gamma\in Q^*$ such that if $K=(\Phi_2+a_2\Gamma)(Q)$, then $\hh
K_{a_2} \geq n-1$. Since $(\Phi_2+a_2\Gamma,a_2)$ is also a lift of
$\Phi=(\Phi_1,a_1)$, replacing $\Phi_2$ with $\Phi_2+a_2\Gamma$, we
may assume that $K=\Phi_2(Q)$ and $\hh K_{a_2} \geq n-1$. 
Note that $(K,a_2)=I$.

\smallskip

\noindent{\bf Case 1.} Assume that  $\hh I <n $. It is easy to see that
$\hh K =\hh I$. Since $\Phi_2(Q)+J^2=A$, replacing $a_2$ by
$a_2+\Phi_2(q)$ for some $q\in Q$, we may assume that $a_2=1$ modulo
$J^2$.

Consider the following ideals in the ring $A[Y]$: $\CI_1=(K,Y+a_2)$,
$\CI_2=JA[Y]$ and $\CI_3=\CI_1\cap \CI_2$. Note that $\CI_1(0) = I$
and $\CI_2(0)=J$. We have two surjections 
$$\Psi : P\surj \CI_3(0) ~~{\rm and}~~
\gd :=(\Phi_2 \ot A[Y], Y+a_2):P[Y] \surj \CI_1$$ 
such that $\Psi \ot
A/\CI_1(0)= \Phi\ot A/\CI_1(0)=\gd\ot A/\CI_1(0)$. Further $\dd
A[Y]/\CI_1 =\dd A/K$. 

Since $K\subset I$ have  the same height and $n+\hh
I \geq \dd A+3$, we have $\dd A/K \leq n-3$. Hence, applying
 (\ref{mrs}), we get a surjection $\eta:P[Y]
\surj \CI_3$ such that $\eta(0)=\Psi :Q\op A \surj J$. Since $1-a_2\in
J^2$, putting $Y=1-a_2$, we get a surjection $\eta(1-a_2):=\gD :P\surj
J$ with $\gD\ot A/J=\Psi \ot A/J$. This proves the result in this case.

\noindent{\bf Case 2.} Assume that $\hh I=n$. Then height of $K_{a_2} \geq
n-1$ and $I=(K,a_2)$ implies that $\hh K\geq n-1$. Since $n+\hh I\geq
\dd A+3$, we get $\dd A/K \leq n-2$. Now we can
complete the proof as in case 1.
$\hfill \square$ 
\end{proof}

\begin{proposition}\label{add}
(Addition principle) Let $I,J$ be two comaximal ideals of a ring $A$
and let $P=Q\op A$ be a projective $A$-module of rank $n$, where
$n+\hh (I\cap J)\geq \dd A+3$.  Let $\Phi:P\surj I$ and $\Psi: P\surj
J$ be two surjections. Then there exists a surjection $\gD:P\surj
I\cap J$ such that $\Phi\ot A/I=\gD\ot A/I$ and $\Psi\ot A/J =\gD\ot
A/J$.
\end{proposition}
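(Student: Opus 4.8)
The plan is to mimic the structure of the Subtraction Principle, but run the argument in the opposite direction. First I would reduce, using Theorem~\ref{serre} and the hypothesis $n + \hh(I\cap J)\geq \dd A + 3$, to a situation where $\Phi$ has a convenient normal form. Writing $\Phi = (\Phi_1, a_1)$ with $\Phi_1 \in Q^*$, reduce modulo $J^2$: since $\dd(A/J^2) = \dd(A/J)$ is small (here $\dd A/J \le \dd A \le n + \hh(I\cap J) - 3 \le n-2$ when $I\cap J$ is proper, and one handles the trivial cases $I=A$ or $J=A$ separately), $\bar Q$ has a unimodular element, and by Theorem~\ref{bass} together with the lifting Proposition~\ref{b-roy} I can compose $\Phi$ with an automorphism of $Q\op A$ so that, modulo $J^2$, the new surjection $\Phi_2 = (\Phi_2', a_2)$ satisfies $a_2 \equiv 1$. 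Then $\Phi' := (\Phi_2', Y + a_2 - 1)$ or a similar device gives a surjection after specialization.

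Next, the key construction: introduce the polynomial variable $Y$ and form the ideals $\CI_1 = (\Phi_2'(Q), Y + a_2)$ (or an analogous monic-containing ideal) and $\CI_2 = J A[Y]$ in $A[Y]$, with $\CI_1(0) = I$ (after the normalization) and $\CI_2(0) = J$; note $\CI_1$ contains a monic polynomial in $Y$ while $\CI_2$ is extended from $A$, and $\CI_1 + \CI_2 = A[Y]$ because $I + J = A$. Using Theorem~\ref{EE} applied to $(\Phi_2', a_2)$ I can arrange $\hh(\Phi_2'(Q))_{a_2} \geq n-1$, which together with $\hh I \le n$ forces $\dd A/\Phi_2'(Q)$ to be small enough ($\le n-2$, and $\le n-3$ in the case $\hh I < n$, by the same bookkeeping as in Proposition~\ref{sub} Cases 1 and 2). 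Then Theorem~\ref{mrs}, applied with the surjection $\Psi : P \surj J = \CI_2(0)$ playing the role of $\rho$ and the natural surjection $P[Y]\surj \CI_1$ playing the role of $\delta$ (compatibility at $Y=0$ being exactly $\Phi \ot A/I = \Psi \ot A/I$, which one first forces by composing $\Psi$ with a transvection via Proposition~\ref{b-roy}, or else is automatic), yields a surjection $\eta : P[Y] \surj \CI_1 \cap \CI_2$ with $\eta(0) = \Psi$. Specializing $Y$ to the value that makes $Y + a_2$ agree with the original data (using $a_2 \equiv 1 \bmod J^2$ so the specialization lands in the right ideal) produces $\gD : P \surj I\cap J$. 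Finally one checks that, after undoing the automorphisms applied to $\Phi$ and $\Psi$, the residue conditions $\Phi \ot A/I = \gD \ot A/I$ and $\Psi \ot A/J = \gD \ot A/J$ hold.

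The main obstacle I expect is the same delicate point that appears in the Subtraction Principle: making Theorem~\ref{mrs} applicable requires the dimension bound $\dd(A[Y]/\CI_1) = \dd(A/\Phi_2'(Q)) \le n-2$, and to get this one must be careful that replacing $\Phi_2'$ via Eisenbud--Evans raises $\hh(\Phi_2'(Q))$ (localized at $a_2$) to at least $n-1$ while keeping $(\Phi_2'(Q), a_2) = I$; this is where the hypothesis $n + \hh(I\cap J)\geq \dd A+3$ is consumed, and one must split into the cases $\hh I < n$ and $\hh I = n$ exactly as in Proposition~\ref{sub}. A secondary nuisance is bookkeeping the residue field compatibilities: since $\mathrm{mrs}$ only records $\eta(0) = \rho$, recovering the condition $\Psi \ot A/J = \gD \ot A/J$ at the end requires tracking how the specialization $Y \mapsto c$ (with $c - a_2 \in$ a suitable ideal, or $c$ chosen so $c+a_2 \equiv$ a unit mod $J$) interacts with $\CI_1 \cap \CI_2$, and then transporting everything back through the automorphism $\gt$ used to normalize $\Phi$.
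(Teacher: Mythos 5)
Your outline breaks down at its central step, the appeal to Theorem~\ref{mrs}. In that theorem, with $I_1=\CI_1$ and $I_2=\CI_2$, the map $\rho$ must be a surjection from $P$ onto $(\CI_1\cap\CI_2)(0)$. In your configuration $\CI_1(0)=I$ and $\CI_2(0)=J$, so $\rho$ would have to surject onto $I\cap J$ --- which is precisely what the Addition Principle is supposed to produce; the map $\Psi:P\surj J$ you propose to use as $\rho$ does not satisfy the hypothesis, so the application is circular. To make $\rho=\Psi$ legitimate you must instead arrange $\CI_1(0)=A$ (the monic generator must have unit constant term) and have $\CI_1$ become $I$ only at the specialization point. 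But with $(H,b)=I$ and $b\in J^2$, any such specialization point is a unit modulo $J$ (e.g. the paper's polynomial $T^2+(b-2)T+1$ takes the value $1$ at $T=0$ and $b$ at $T=1$), so the single conclusion $\eta(0)=\Psi$ of Theorem~\ref{mrs} no longer controls $\gD\ot A/J$; and no part of its conclusion ever controls $\gD\ot A/I$. This is exactly the issue you dismiss as a ``secondary nuisance'' of bookkeeping, but it is fatal to the asymmetric, subtraction-in-reverse setup ($\CI_2=JA[Y]$ extended, compatibility imposed only at $Y=0$): that setup has no mechanism to deliver the two residue conditions $\gD\ot A/I=\Phi\ot A/I$ and $\gD\ot A/J=\Psi\ot A/J$ simultaneously. (Also note the comaximality $\CI_1+\CI_2=A[Y]$ forces $H+J=A$, so you must use the subtraction-style normal form in which $H=\Phi_2'(Q)$ is comaximal with $J^2$, not merely $a_2\equiv 1$ modulo $J^2$.)

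The paper's proof is not a reversal of the Subtraction Principle but a symmetric construction, and the extra ingredients are exactly what your sketch is missing. After normalizing $\Phi$ modulo $J^2$ (getting $H=\Phi_1(Q)$ with $H+J^2=A$, $b\in J^2$, $\dd A/H\le n-2$), it performs a second normalization, of $\Psi$ modulo $H$, producing $K=\Psi_2(Q)$ with $K+H=A$, $c\in H$, $\dd A/K\le n-2$. Then both ideals $\CI_1=(H,\,T^2+(b-2)T+1)$ and $\CI_2=(K,\,T^2+(c-2)T+1)$ contain monic polynomials, are comaximal, and satisfy $\CI_i(0)=A$, $\CI_1(1)=I$, $\CI_2(1)=J$. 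The two obvious surjections onto $\CI_1$ and $\CI_2$ are glued modulo $\CI_3^2$ (possible since $\CI_1+\CI_2=A[T]$), the resulting map is lifted modulo $\CI_3^2T$ using $\CI_3(0)=A$, then lifted to an honest surjection onto $\CI_3$ via (\ref{mrs}) (monic present, $\dd A[T]/\CI_3\le n-2$), and finally evaluated at $T=1$. Both residue conditions then come for free, because they were built into the gluing before the lifting step. Without the second normalization (which creates the comaximal pair $H,K$) and the gluing modulo $\CI_3^2$, your argument cannot be completed as written.
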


\begin{proof}
As each of $I,J$ is locally generated by $n$ elements, we have $\hh
I\leq n$ and $\hh J\leq n$. Further, we can change $\Phi$ and $\Psi$
by composing it with automorphisms of $Q\op A$.

Let ``bar'' denote reduction modulo $J^2$ and write $B=A/J^2$. Since
$n+\hh J\geq \dd A+3$, we get $\dd B \leq n-3$, by 
(\ref{serre}), $\ol Q=Q_1\op B$. Further, $I+J=A$ and 
hence $\ol I=B$.  Write $\ol \Phi=(\phi_1,b_1,b_2):Q_1\op B^2 \surj B$
for the natural surjection induced from $\Phi$. Since $\ol \Phi
=(\phi_1,b_1,b_2)\in \Um (Q_1\op B^2)$, applying 
(\ref{bass}), we get $\Theta\in {\mathcal E}(Q_1\op B^2)$ such that
$(\phi_1,b_1,b_2)\Theta =(0,1,0)$. Using (\ref{b-roy}), let $\theta
\in {\mathcal E}(Q\op A)$ be a lift of $\Theta$. If $\Phi \theta =(\Phi_1,b)$,
then $\Phi_1(Q)+J^2=A$ and $b\in J^2$. Applying (\ref{EE}), and
replacing $\Phi_1$ by $\Phi_1+b \Gamma$ for some $\Gamma \in Q^*$, we
may assume that $\hh (H_b) \geq n-1$, where
$H=\Phi_1(Q)$. Since $n+\hh I\geq \dd A+3$, as in the proof of
(\ref{sub}), we can conclude that $(i)$ if $\hh I<n$, then $\hh I=\hh H$
and $(ii)$ if $\hh I=n$, then $\hh H \geq n-1$. In both the cases, we
get that $\dd A/H\leq n-2$.

Let $C=A/H$ and let ``tilde'' denote reduction modulo $H$. Since
$H+J^2=A$, we get $\wt \Psi\in \Um(\wt Q\op C)$. Further, $\dd C\leq
n-2$, hence by  (\ref{serre}), $\wt Q=Q_2\op C$. Write
$\wt \Psi =(\Psi_1,c_1,c_2) \in \Um(Q_2\op C^2)$. Applying 
 (\ref{bass}) on $\wt \Psi =(\Psi_1,c_1,c_2)$, we get $\Sigma
\in {\mathcal E}(Q_2\op C^2)$ such that $(\Psi_1,c_1,c_2)\Sigma=(0,1,0)$. Using
(\ref{b-roy}), let $\sigma \in {\mathcal E}(Q\op A)$ be a lift of $\Sigma$.  If
$\Psi \sigma=(\Psi_2,c)$, then $\Psi_2(Q)+ H=A$ and $c\in H$. Applying
(\ref{EE}), and replacing $\Psi_2$ by $\Psi_2+c\Gamma'$ for some
$\Gamma'\in Q^*$, we may assume that $\hh(K_c)\geq n-1$, 
where $K=\Psi_2(Q)$. Once again it is easy to see that
$(i)$ if $\hh J<n$, then $\hh K=\hh J$ and $(ii)$ if $\hh J=n$, then
$\hh K\geq n-1$.  From this, we can conclude that $\dd A/K\leq n-2$.
We have $H+K=A$ and $\dd A/(H\cap K) \leq n-2$. 

Consider the
following ideals of $A[T]$: 
$$\CI_1=(H,T^2+(b-2)T+1), \, 
\CI_2=(K,T^2+(c-2)T+1) ~~{\rm and}~~
\CI_3=\CI_1\cap \CI_2.$$ 
If we write $\gt_1=(\Phi_1, T^2+(b-2)T+1)$
and $\gt_2=(\Psi_2, T^2+(c-2)T+1)$, then $\gt_1,\gt_2$ are surjections
from $Q[T]\op A[T]$ to $\CI_1$ and 
$\CI_2$, respectively. Note that $\CI_1(1)=I$ and $\CI_2(1)=J$.

Since $\CI_1+\CI_2=A[T]$, we have $\CI_3/\CI_3^2=\CI_1/\CI_1^2\op
\CI_2/\CI_2^2$. Hence, using surjections $\gt_1$ and $\gt_2$, we get a
surjection $ \Delta : Q[T]\op A[T] \surj \CI_3/\CI_3^2$ such that 
$
\Delta \ot A[T]/\CI_1 = \gt_1 \ot A[T]/\CI_1$ and $ \Delta \ot
A[T]/\CI_2 = \gt_2 \ot A[T]/\CI_2.$ 
Since $\CI_3(0)=A$, it is easy to
see that the surjection $ \Delta$ can be lifted to a surjection
$\Delta_1:Q[T]\op A[T] \surj \CI_3/(\CI_3^2T)$. Since $\CI_3$ contains a monic
polynomial and $\dd A[T]/\CI_3=\dd A[T]/(\CI_1\cap \CI_2)= \dd A/(H\cap
K) \leq n-2$, applying  (\ref{mrs}), we can lift $\Delta_1$
to a surjection $\Delta_2:Q[T]\op A[T] \surj \CI_3$.

Write $\Delta_2(1):=\Delta_3$. Then $\Delta_3:Q\op A\surj I\cap J$ is
a surjection. Further, we have 
$\Delta_3 \ot A/I = \Delta(1) \ot A/I =
\gt_1(1)\ot A/I = (\Phi_1,b)\ot A/I = \Phi\ot A/I.$
Similarly, $\Delta_3\ot A/J=\Psi \ot A/J$. Hence $\Delta_3$ is the
required surjection. This completes the proof.  
$\hfill \square$ 
\end{proof}

\smallskip

The following result generalises \cite[4.6]{bk}. Recall that
$A(T)$ denotes the ring obtained from $A[T]$ by inverting all monic
polynomials.

\begin{lemma}\label{4.6}
Let $A$ be a ring, $I$ an ideal of $A[T]$  with $I+\CJ A[T]
=A[T]$ and let $n$ be a
positive integer such that $\dd A/\CJ\leq n-2$,
where $\CJ$ denotes the Jacobson radical of $A$. Let $P$ be
a projective $A$-module of rank $n\geq \dd A[T]-\hh I +2$.
Let $\phi : P[T] \surj I/I^2$ be a surjection. If
the surjection $\phi\ot A(T): P(T) \surj IA(T)/I^2A(T)$ can be lifted
to a surjection $\Phi_1 : P(T) \surj IA(T)$, then $\phi$ can be lifted
to a surjection $\Phi: P[T] \surj I$.
\end{lemma}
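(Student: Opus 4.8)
The plan is to reduce the problem over $A[T]$ to the same problem over the local rings $A_{\mathfrak{m}}[T]$ at maximal ideals $\mathfrak{m}$ of $A$, glue using a patching argument, and then invoke the hypothesis about liftability over $A(T)$ to handle the ``generic'' part. First I would pass to the ring $A(T)$: by hypothesis the surjection $\phi \otimes A(T) : P(T) \surj IA(T)/I^2A(T)$ lifts to a surjection $\Phi_1 : P(T) \surj IA(T)$. The point of inverting monic polynomials is that $IA(T)$ is comaximal with the ``part of $I$ coming from special fibres'': since $I + \CJ A[T] = A[T]$ and $\dd A/\CJ \le n-2$, the ideal $I$ behaves well after this localization. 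I would then choose $s \in A[T]$ a monic polynomial (or more precisely a product of monic polynomials) such that $\Phi_1$ is already defined over $A[T]_s$, i.e. $\Phi_1$ comes from a surjection $\Phi_s : P[T]_s \surj I_s$ lifting $\phi_s$.

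Next comes the local-global gluing. We have the lift $\phi$ over $A[T]$ of the mod-$I^2$ surjection and the lift $\Phi_s$ over $A[T]_s$. Over $A[T]_s$ both $\Phi_s$ and (any set-theoretic lift of) $\phi$ are lifts of the same surjection modulo $I^2_s$, so they differ by an element of $\Hom(P[T]_s, I^2_s)$, i.e. essentially by an automorphism/elementary modification governed by $I^2$. The strategy is the standard one (as in \cite{bk}, \cite{brs1}): use the action of the elementary/transvection group to move $\Phi_s$ and a lift of $\phi$ into agreement on the overlap, so that they patch to a global surjection $\Phi : P[T] \surj I$. Concretely one writes $P = Q \oplus A$ (possible since $\text{rank}\,P = n \ge \dd A/\CJ + 2$, using Theorem \ref{serre}), applies Theorem \ref{bass} to get transitivity of the action of $\mathcal{E}(P \oplus A)$ on unimodular elements, and lifts the relevant elementary automorphisms from $A[T]/I^2$ (or from the localization) to automorphisms of $P[T]$ via Proposition \ref{b-roy}. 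The inequality $n \ge \dd A[T] - \hh I + 2$ is exactly what is needed to make these dimension counts work: $\dd A[T]/I \le \dd A[T] - \hh I \le n-2$, so $P[T]/IP[T]$ has rank $> \dd A[T]/I$ and all the Serre/Bass machinery applies over the quotient.

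The hard part will be the patching step itself — ensuring that the local lift $\Phi_s$ and the chosen lift of $\phi$ can simultaneously be adjusted (by automorphisms defined globally, not merely locally) so as to agree on $\Spec A[T]_s$, so that they glue to a genuine global surjection rather than just a global map that is locally a surjection. This is where one must be careful about which automorphisms descend: the transvections used on the overlap must be split, using Proposition \ref{b-roy}, as a product of an automorphism of $P[T]$ and an automorphism of $P[T]_s$, and the latter must be absorbed into $\Phi_s$ without destroying surjectivity. A secondary technical point is controlling the monic polynomial $s$: one needs $I$ to still have the right height properties after inverting $s$, and one needs the condition $I + \CJ A[T] = A[T]$ to guarantee that no maximal ideal of $A[T]$ containing $I$ is ``lost'' when passing between $A[T]$, $A[T]_s$ and $A(T)$. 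Once the surjection $\Phi : P[T] \surj I'$ is obtained with $I' \subseteq I$ and $I' + I^2 = I$, a final application of Lemma \ref{2.11} together with a standard subtraction-type argument (or directly the fact that $\Phi$ lifts $\phi$ mod $I^2$) upgrades $\Phi$ to a surjection onto $I$ itself. I expect the remaining verifications — that the glued map is $A[T]$-linear, that it reduces to $\phi$ modulo $I^2$, and that surjectivity is preserved — to be routine checks of the kind carried out in \cite{bk}.
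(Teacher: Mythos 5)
There is a genuine gap, and it sits exactly where you defer it: the ``patching step'' you postpone is not a technical verification but the whole content of the lemma, and the mechanism you propose for it does not work. Your key claim is that over $A[T]_s$ the lift $\Phi_s$ and a lift of $\phi$ ``differ by an element of $\Hom(P[T]_s,I^2_s)$, i.e.\ essentially by an automorphism/elementary modification governed by $I^2$.'' Differing by a homomorphism into $I^2$ is \emph{not} the same as differing by an automorphism of $P[T]_s$: two surjections from a projective module onto an ideal that agree modulo the square of the ideal need not be related by any automorphism, and measuring precisely this failure is what the subtraction principles and the Euler class machinery exist for. So the Bass/transvection apparatus (\ref{bass}), (\ref{b-roy}) has nothing to act on, and a Quillen-type local--global gluing is not available in this setting: Quillen patching concerns extendedness of modules, not surjections onto a fixed ideal. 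Moreover, your outline uses the hypothesis $n\geq \dd A[T]-\hh I+2$ only to run Serre/Bass over $A[T]/I$, whereas the proof genuinely needs it in the stronger form of the inequalities $\dd A(T)-\hh IA(T)\leq n-3$ required by the subtraction principle; and you never use the already-known height-$n$ case, which is the anchor of the argument.

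For comparison, the paper's proof runs as follows. If $\hh I=n$ the statement is exactly \cite[4.6]{bk}, so assume $\hh I<n$. Since $\dd A[T]/\CJ A[T]\leq n-1$, the moving lemma (\ref{moving}) applied with $K=\CJ A[T]$ lifts $\phi$ to a surjection $\Psi:P[T]\surj I\cap J$ with $\hh J\geq n$, $\Psi(P[T])+(I^2\cap \CJ A[T])=I$ and $J+(I^2\cap \CJ A[T])=A[T]$; one may assume $\hh J=n$, and $\Psi$ induces $\psi:P[T]\surj J/J^2$. Over $A(T)$ the given lift $\Phi_1$ and $\Psi\ot A(T)$ agree modulo $IA(T)$, and $\dd A(T)-\hh IA(T)\leq n-3$, $\dd(A(T)/JA(T))\leq n-3$, so the subtraction principle (\ref{sub}) yields a surjection $P(T)\surj JA(T)$ lifting $\psi\ot A(T)$. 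Since $\hh J=n$ and $J+\CJ A[T]=A[T]$, the height-$n$ case \cite[4.6]{bk} now lifts $\psi$ to a surjection $\gD:P[T]\surj J$, and a second application of (\ref{sub}), to $\gD$ and $\Psi$, produces the desired surjection $\Phi:P[T]\surj I$ with $\Phi\ot A[T]/I=\phi$. None of these steps (moving lemma, transfer from $I$ to the residual ideal $J$ via subtraction over $A(T)$, reduction to the known height-$n$ case, and subtraction again over $A[T]$) appears in your outline, and without them the plan does not close.
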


\begin{proof} 
As $I$ is locally generated by $n$ elements, we have $\hh I\leq
n$. Further, if $\hh I=n$, then the result follows from (\cite{bk},
Lemma 4.6). Hence, we assume that $\hh I <n$.

As $\dd A[T]/\CJ A[T] \leq 
n-1$, by (\ref{moving}), $\phi$ has a lift  $\Psi:
P[T] \surj J'$ such that 

$(i)$  $J'+(I^2\cap \CJ A[T])=I$, 

$(ii)$ $J'=I\cap J$, where $J$ is an ideal of height $\geq
n$, and 

$(iii)$ $J+(I^2\cap \CJ A[T])=A[T]$.

We assume that $\hh J=n$ (if $\hh J >n$ then $J=A$ and we are done). We
get a surjection $\psi : P[T] \surj J/J^2$ induced from $\Psi$. 
%%%%This is wrong! ---Note that $n\geq \dd (A(T)/IA(T)) +3$%%%%%%
Note that $\Phi_1 \ot A(T)/IA(T) = \Psi\ot
A(T)/IA(T)$. Hence $\Phi_1$ is a lift of $\Psi\ot A(T)/IA(T)$.

We observe that $\dd A(T)-\hh IA(T)\leq \dd
A[T]-1-\hh I \leq n-3$ and

\begin{align*}
\dd (A(T)/JA(T))\leq \dd A(T)-\hh JA(T)\leq \dd A[T]-1-\hh J\\
= \dd A[T]-1-n \leq \dd A[T]-1-\hh I \leq n-3.
\end{align*}

Applying  (\ref{sub}) to the surjections $\Phi_1$ and $\Psi\ot A(T)$, we
get a surjection $\Psi_1 : P(T) \surj JA(T)$ such that $\Psi_1\ot
A(T)/JA(T) = \Psi \ot A(T)/JA(T) = \psi\ot A(T)/JA(T)$. 

Since $\hh J=n$ and $J+\CJ A[T]=A[T]$, applying \cite[4.6]{bk},  
we conclude that $\Psi\ot A[T]/J$ can be lifted to a surjection
$\gD:P[T] \surj J$. 
Note that $\dd A[T]/I \leq n-2$ and $\dd A[T]-\hh J < \dd A[T] -\hh
I \leq n-2$ (since $\hh J=n> \hh I$). Hence $\dd A[T]-\hh J \leq
n-3$.  Applying  (\ref{sub}) to
 $\gD$ and $\Psi$, we get a surjection $\Phi: P[T]
\surj I$ such that $\Phi \ot A[T]/I=\phi$. This proves the result.
$\hfill \square$ 
\end{proof}

\smallskip

The following result generalises (\cite{bk}, Proposition 4.9).

\begin{proposition}\label{4.9} 
Let $A$ be a regular domain containing a field and let $I$ be an ideal
of $A[T]$. Let $P$ be a projective $A$-module of rank
$n\geq \dd A[T]-\hh I+2$. Let $\psi: P[T] \surj I/I^2T$ be a
surjection. If there exists a surjection $\Psi' : P(T) \surj IA(T)$
which is a lift of $\psi \ot A(T)$, then we can lift $\psi$ to a
surjection from $P[T]$ to $I$.
\end{proposition}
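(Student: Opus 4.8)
The plan is to reduce Proposition \ref{4.9} to Lemma \ref{4.6} by arranging the hypotheses of the latter, the main difficulty being that $\psi$ is only a surjection onto $I/I^2T$ (not $I/I^2$) and that $I$ need not be comaximal with the Jacobson radical. First I would pass to the ideal $J=I\cap A$ and invoke Lemma \ref{BRS}: it suffices to lift the surjection $\psi\ot A_{1+J}[T]$ to a surjection $P_{1+J}[T]\surj I_{1+J}$. So after replacing $A$ by $A_{1+J}$ we may assume that $J=I\cap A$ is contained in the Jacobson radical $\CJ$ of $A$; in particular $I+\CJ A[T]=A[T]$ is not automatic, but $I(0)=A$ is not what we get either — rather, the localization ensures $J$ sits inside every maximal ideal, so $I$ together with $\CJ A[T]$ behaves well. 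Since $A$ is a regular domain essentially of finite type (or just containing a field) we have $\dd A/\CJ \le \dd A - \hh J$, and after this localization $\hh J \ge 1$ unless $J=0$; one handles the case $J=0$ (i.e. $I$ already contains a monic polynomial) separately by Mandal's theorem or directly by \cite{bk}.

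Next I would upgrade the given surjection $\psi:P[T]\surj I/I^2T$ to a surjection onto $I/I^2$. The standard device: since $\psi(0):P\surj I(0)$ and $\psi$ agrees with $\psi(0)\ot A[T]$ modulo $I^2T$, and since $I(0)$ is the image of $P$, one uses a patching argument (as in \cite{brs1} or \cite{bk}) — more precisely, one shows that the obstruction to lifting $\psi$ across the surjection $I/I^2 \surj I/I^2T$ vanishes after the localization at $1+J$, because $A_{1+J}$ is "local enough" along $J$. This is exactly the content that makes Lemma \ref{BRS} applicable: over $A_{1+J}$ a lift to $I/(I^2T)$ can be promoted to a lift to $I/I^2$. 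Concretely, I would produce a surjection $\phi: P[T]\surj I/I^2$ (over $A_{1+J}$) with $\phi \equiv \psi \pmod{I^2T}$.

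Now the hypothesis provides $\Psi':P(T)\surj IA(T)$ lifting $\psi\ot A(T)$. Since inverting monic polynomials kills the distinction between $I/I^2$ and $I/I^2T$ (as $T$ becomes relevant only through $T=0$, and $A(T)$ sees no closed fibre at $T=0$ — more precisely $\psi\ot A(T)$ and $\phi\ot A(T)$ are surjections onto $IA(T)/I^2A(T)$ that differ by an element of $I^2A(T)T$, hence can be identified after adjusting $\Psi'$), the surjection $\Psi'$ is also a lift of $\phi\ot A(T): P(T)\surj IA(T)/I^2A(T)$. At this point all the hypotheses of Lemma \ref{4.6} are in place over $A_{1+J}$: namely $I+\CJ A[T]=A[T]$ after the requisite reductions (or one works with the version where $\dd A/\CJ \le n-2$, which holds since $n \ge \dd A[T] - \hh I + 2 \ge \dd A - \hh J + 1 \ge \dd A/\CJ + 2$ using regularity), the rank bound $n\ge \dd A[T]-\hh I+2$, and the lift $\Psi'$ over $A(T)$. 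Lemma \ref{4.6} then yields $\Phi: P_{1+J}[T]\surj I_{1+J}$ lifting $\phi$, hence lifting $\psi\ot A_{1+J}[T]$. Finally Lemma \ref{BRS} descends this to a surjection $P[T]\surj I$ lifting $\psi$, completing the proof.

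The step I expect to be the main obstacle is the second one: carefully checking that the passage from a lift onto $I/(I^2T)$ to a lift onto $I/I^2$ goes through over $A_{1+J}$, and reconciling this with the $A(T)$-lift $\Psi'$ so that Lemma \ref{4.6} applies verbatim. The regularity hypothesis and the containing-a-field hypothesis enter precisely here (and in Lemma \ref{BRS}); without them the obstruction need not vanish, consistent with the counterexamples mentioned in the introduction. The height and dimension bookkeeping — translating $n\ge \dd A[T]-\hh I+2$ through the localization and through the splitting $I=J'\cap J$ that Lemma \ref{4.6} internally performs — is routine but must be done with care to ensure $\hh I$ is not accidentally increased past $n$.
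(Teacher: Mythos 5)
Your reduction to Lemma \ref{4.6} has a genuine gap, and it sits exactly where the real content of Proposition \ref{4.9} lies. First, the comparison between $I/(I^2T)$ and $I/I^2$ goes the other way from what you assume: since $I^2T\subseteq I^2$, the natural surjection is $I/(I^2T)\surj I/I^2$, so producing the induced map $\phi:P[T]\surj I/I^2$ from $\psi$ is trivial --- but it \emph{loses} information, namely the behaviour at $T=0$. Lemma \ref{4.6} (even if it applied) only returns a surjection $\Phi:P[T]\surj I$ with $\Phi\equiv\psi$ modulo $I^2$; this is strictly weaker than lifting $\psi$, which requires agreement modulo $I^2T$ and in particular forces $\Phi(0)=\psi(0)$ on the nose. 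There is no surjection $I/I^2\surj I/(I^2T)$ across which an ``obstruction'' could vanish, and no localization at $1+J$ repairs this. Moreover, Lemma \ref{BRS} needs, over $A_{1+J}$, a lift of the mod-$(I^2T)$ data in order to descend, so your final step cannot be carried out with only a mod-$I^2$ lift in hand. This missing step is precisely what the paper's proof spends its second half on: after using the Moving Lemma \ref{moving} to write a lift $\Psi:P[T]\surj I\cap I'$ with $I'$ of height $n$ comaximal with $(J^2T)$, solving the problem for $I'$ (via the subtraction principle \ref{sub} over $A(T)$ together with the known height-$n$ case \cite[4.9]{bk}), it then transfers the solution back to $I$ while keeping track of congruences modulo $(J^2T)$, using Plumstead's result, Eisenbud--Evans, and Mandal--Raja Sridharan's theorem \ref{mrs} in an auxiliary variable $Y$. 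None of this machinery is dispensable in your outline; it is what produces a lift of $\psi$ rather than of $\phi$.

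Second, Lemma \ref{4.6} is not applicable verbatim in the situation you create. Its hypotheses are $I+\CJ A[T]=A[T]$ \emph{and} $\dd A/\CJ\leq n-2$ (they are not alternative versions), and after the reduction via Lemma \ref{BRS}, which arranges $J=I\cap A\subseteq \CJ(A_{1+J})$, the comaximality $I+\CJ A[T]=A[T]$ will in general fail --- indeed the interesting case is exactly when $V(I)$ meets the closed fibres over $V(J)$. A small additional slip: $J=I\cap A=0$ does not mean that $I$ contains a monic polynomial (e.g.\ $I=(XT-1)$ in $k[X][T]$), so that side case cannot be dismissed by Mandal's theorem as stated. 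In short, the proposal would prove at best a mod-$I^2$ lifting statement under an extra comaximality hypothesis; the statement actually claimed, with the lift modulo $I^2T$, requires the auxiliary-ideal construction and the $(J^2T)$-patching argument of the paper.
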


\begin{proof} 
As $I$ is locally generated by $n$ elements, we have $\hh I\leq n$.
Further, if $\hh I=n$, then the result follows from \cite[4.9]{bk}.
 Hence, we assume that $\hh I<n$.

\noindent{\bf Step 1.}
By \cite[3.5]{brs1}, we may assume that $J=I\cap A\subset
\CJ(A)$. Note that  $\dd(A/J)\leq \dd A-\hh J=\dd
A[T]-1-\hh J\leq \dd A[T]-\hh I\leq n-2.$  Therefore, by (\ref{serre}), 
we may assume that $P=Q\op A^2$.

By (\ref{moving}), we can lift $\psi$ to a surjection $\Psi :P[T]
\surj I\cap I'$, where $I'\subset A[T]$ is of height $n$ with
$\Psi(P[T])+(J^2T)=I$ and $I'+(J^2T)=A[T]$. (If $\hh I'>n$, then
$I'=A[T]$ and we are done.)

Let $\psi_1 : P[T] \surj I'/{I'}^2$ be induced from
$\Psi$. Since $I'(0)=A$ and $P$ has a unimodular element, $\psi_1$ can
be lifted to a surjection $\psi_2 : P[T] \surj I'/{I'}^2T$ by \cite[3.9]{brs1}.

Observe that $\dd A(T) - \hh IA(T) \leq n-3$ and $\dd
A(T)/I'A(T) \leq n-2$.  Further, we have $\Psi \ot A(T)/IA(T)=\Psi'
\ot A(T)/IA(T)$. Hence, applying (\ref{sub}), we
get a surjection $\gD:P(T) \surj I'A(T)$ such that 
$ \gD \ot A(T)/I'A(T) =\psi_2\ot
A(T)/I'A(T) = \psi_1\ot A(T)/I'A(T).$ 
Since $\hh I'=n$, by
\cite[4.9]{bk}, $\psi_2$ can be lifted to a surjection
$\gD_1: P[T] \surj I'$. \\

\noindent{\bf Step 2.}  Write $B=A[T]/(J^2T)A[T]$. Since
$I'+(J^2T)=A[T]$, we get $(\gD_1\ot B)\in \Um({P[T]^*\ot B})$. Since
$P=Q\op A^2$, we write $\gD_1\ot B = ( \gD_2,a_1,a_2)$, where $\gD_2 \in
{Q[T]^*\ot B}$ and $a_1,a_2\in B$. Note that $ B/JB= (A/J)[T]$ and
$\dd A/J \leq n-2$.

Let ``bar'' denote reduction modulo $JB$ and write $\ol B:=B/JB$.  By
a result of
Plumstead
\cite{p}, there exists $\Theta \in {\mathcal E}(\ol {P[T]^*\ot B}) $ such that
$\Theta (\ol \gD_2,\ol a_1,\ol a_2)=(0,1,0)$. Since $JB$ is contained
in the Jacobson radical of $B$, we can lift $\Theta$ to $\Theta_1 \in
{\mathcal E}(P[T]^*\ot B)$ such that $\Theta_1(\gD_2,a_1,a_2)=(0,1,0)$. 
Let $\Theta_2\in \Aut (P[T]^*)$ be a lift of $\Theta_1$. If
$\Theta_2(\gD_1)=(\gD_3,b_1,b_2)$, then we get that
$\gD_3(Q[T])\subset (J^2T)$, $b_1=1$ modulo $(J^2T)$ and $b_2\in
(J^2T)$.

By (\ref{EE}), replacing $(\gD_3,b_1,b_2)$ by
$(\gD_3+b_2\gd_1,b_1+cb_2,b_2)$ for some $\gd_1 \in Q[T]^*$ and $c\in
A[T]$, we may assume that $\hh (\gD_3(Q[T]),b_1)=n-1$. Note that we
still have $(\gD_3(Q[T]),b_1)+(J^2T)=A[T]$. Further, replacing $b_2$
by $b_1+b_2$, we may assume that $b_2=1$ modulo $(J^2T)$. As
$(\gD_3(Q),b_1)$ is comaximal with $\CJ(A)A[T]$, we have

\begin{align*}
\dd A[T]/(\gD_3(Q[T]),b_1)\leq \dd A[T]-\hh (\gD_3(Q),b_1)-1\\
=\dd
A[T]-n\leq \dd A[T]-\hh I\leq n-2.
\end{align*}

Write $C:=A[T], \wt P=P[T]$ and consider the following ideals of
$C[Y]$: 
$$\CK_1=(\gD_3(Q[T]),b_1,Y+b_2), \,\CK_2=IC[Y] ~~{\rm and}~~
\CK_3=\CK_1\cap \CK_2.$$ 
Note that $\CK_1(0)=I'$ and $\CK_2(0)=I$. We
have two surjections 
$$\Psi : \wt P \surj \CK_3(0) ~~{\rm and}~~
\Gamma=(\gD_3,b_1,Y+b_2):\wt P[Y] \surj \CK_1$$ such that
$\Gamma(0)\ot C/\CK_1(0)=\Psi \ot C/\CK_1(0)$. Further,  $\dd
C[Y]/\CK_1=\dd C/(\gD_3(Q),b_1)\leq n-2$. Applying 
(\ref{mrs}), we get a surjection $\eta : \wt P[Y] \surj K_3$ such that
$\eta(0)=\Psi$. Since $1-b_2\in (J^2T)$, we get a surjection
$\eta_1:=\eta(1-b_2) : \wt P \surj I$ with $\eta_1=\Psi$ modulo
$(J^2T)$. This proves the result.  
$\hfill \square$ 
\end{proof}

\smallskip

We now prove our main theorem which is a generalization of \cite[4.13]{bk}.

\begin{theorem}\label{4.13} 
Let $k$ be an infinite  field and let $A$ be a
regular domain which is essentially of finite type
over $k$. Let $I$ be an ideal of $A[T]$ and let $P$
be a projective $A$-module of rank $n\geq \dd A[T]-\hh I+2$. 
Then any surjection $\phi: P[T] \surj I/(I^2T)$ can be lifted 
to a surjection from $P[T]$ to $I$.
\end{theorem}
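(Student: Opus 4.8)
The plan is to deduce the theorem from the already known case $\hh I=n$, which is \cite[4.13]{bk} (the hypothesis on $k$ having been relaxed to ``infinite'' in \cite{d4}), together with Proposition \ref{4.9}. Since a surjection $\phi:P[T]\surj I/(I^2T)$ exists, $I$ is locally $n$-generated and so $\hh I\le n$; if $\hh I=n$ then $n\ge \dd A[T]-\hh I+2$ becomes $2n\ge \dd A+3$ and we are done by \cite[4.13]{bk} and \cite{d4}. So assume from now on that $\hh I<n$.

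First I would put $I$ into a convenient form. By Lemma \ref{BRS} it suffices to treat the case $J:=I\cap A\subseteq \CJ(A)$, so I replace $A$ by $A_{1+J}$ (still a regular domain essentially of finite type over $k$, and the inequality $n\ge \dd A[T]-\hh I+2$ can only improve under this localisation). As $A[T]$ is a Noetherian domain, $\dd A[T]/I\le \dd A[T]-\hh I\le n-2$, and since $A/J\inj A[T]/I$ also $\dd A/J\le n-2$; hence by Theorem \ref{serre} we may assume $P=Q\op A^2$. Now apply the Moving Lemma \ref{moving} over $A[T]$ with $f=T$ and $K=J^2A[T]$ (permissible since $\dd A[T]/K=\dd A/J+1\le n-1$): $\phi$ lifts to a surjection $\Psi:P[T]\surj I\cap I'$ with $\hh I'\ge n$, $\Psi(P[T])+(J^2T)A[T]=I$ and $I'+(J^2T)A[T]=A[T]$. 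If $\hh I'>n$ then $I'=A[T]$ and $\Psi$ itself does the job; so assume $\hh I'=n$. Let $\psi_1:P[T]\surj I'/{I'}^2$ be the surjection induced by $\Psi$. Since $I'(0)=A$ and $P$ has a free summand, \cite[3.9]{brs1} lifts $\psi_1$ to $\psi_2:P[T]\surj I'/{I'}^2T$; and as $\hh I'=n$ and $2n\ge \dd A+3$, \cite[4.13]{bk} (with \cite{d4} for general $k$) lifts $\psi_2$ to a surjection $\gD_1:P[T]\surj I'$.

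Next I would build a lift over $A(T)$ and appeal to Proposition \ref{4.9}. Base change to $A(T)$, in which $T$ is a unit. The surjections $\gD_1\ot A(T):P(T)\surj I'A(T)$ and $\Psi\ot A(T):P(T)\surj IA(T)\cap I'A(T)$ agree modulo $I'A(T)$, for both reduce modulo ${I'}^2$ to $\psi_1$; the ideals $IA(T)$ and $I'A(T)$ are comaximal (as $I+I'=A[T]$); and $P(T)=Q(T)\op A(T)^2$ has a free summand. Moreover $\dd A(T)/IA(T)\le \dd A[T]/I\le n-2$, while passing to $A(T)$ only discards minimal primes of $I'$ containing a monic polynomial, so $\hh I'A(T)\ge \hh I'=n$ and hence $n+\hh I'A(T)\ge 2n\ge \dd A[T]+2=\dd A(T)+3$. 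Thus the Subtraction Principle \ref{sub} applies over $A(T)$ and yields $\Psi':P(T)\surj IA(T)$ with $\Psi'\equiv \Psi\ot A(T)$ modulo $(IA(T))^2$ --- the construction in \ref{sub} in fact produces agreement modulo the square, not merely modulo the first power. Since $\Psi$ is a lift of $\phi$, it follows that $\Psi'$ is a lift of $\phi\ot A(T):P(T)\surj IA(T)/I^2A(T)$. Finally, Proposition \ref{4.9} --- whose hypotheses are now satisfied: $A$ is a regular domain containing $k$, $P$ has rank $n\ge \dd A[T]-\hh I+2$, and $\Psi'$ is the required lift over $A(T)$ --- produces a surjection $P[T]\surj I$ lifting $\phi$, which completes the proof.

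I expect the real work to be numerical bookkeeping: verifying that each invocation of Theorem \ref{serre}, the Moving Lemma \ref{moving}, \cite[4.13]{bk} and the Subtraction Principle \ref{sub} stays within range, with due care to the height and dimension comparisons $\hh I'A(T)\ge \hh I'$ and $\dd A(T)/IA(T)\le \dd A[T]/I$ incurred when localising at the monic polynomials, and to keeping track of the successive congruences --- modulo $I^2T$, modulo ${I'}^2$, modulo $(IA(T))^2$ --- so that the surjection finally obtained really lifts the given $\phi$ rather than some other surjection onto $I$.
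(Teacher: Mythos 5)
Your proposal is correct and follows essentially the same route as the paper's own proof: reduce to $J=I\cap A\subseteq \CJ(A)$ via (\ref{BRS}), split off a free summand by (\ref{serre}), use the Moving Lemma (\ref{moving}) to lift $\phi$ to a surjection onto $I\cap I'$ with $\hh I'=n$, handle $I'$ by \cite[3.9]{brs1} together with \cite[4.13]{bk} and \cite{d4}, apply the Subtraction Principle (\ref{sub}) over $A(T)$, and conclude with Proposition (\ref{4.9}). Your extra bookkeeping (comaximality of $IA(T)$ and $I'A(T)$, the dimension and height estimates over $A(T)$, and the observation that the congruence in (\ref{sub}) is modulo the square of the ideal) matches what the paper uses implicitly.
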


\begin{proof}
As $I$ is locally generated by $n$ elements, we have $\hh I\leq
n$. Further, if $\hh I=n$, then the result follows from (\cite{bk},
Theorem 4.13). Hence, we assume that $\hh I<n$.

By (\ref{BRS}), we may assume that $J=I\cap A\subset \CJ(A)$. Since
$$\dd A/J \leq \dd A-\hh J \leq \dd A-\hh I+1 = \dd A[T]-\hh I\leq
n-2,$$ we may assume that $P$ has a unimodular element.

By (\ref{moving}), we can lift $\phi$ to a surjection $\Phi:P[T] \surj
I\cap I'$, where $I'$ is an ideal of $A[T]$ of height $n$ with
$\Phi(P[T])+(J^2T)=I$ and $I'+(J^2T)=A[T]$.
Let $\psi: P[T] \surj I'/{I'}^2$ be  induced from $\Phi$.
Since $I'(0)=A$ and $P$ has a unimodular element, $\psi$ can be lifted
to a surjection $\psi_1:P[T] \surj I'/({I'}^2T)$. By \cite[4.13]{bk}
 and \cite{d4}, $\psi_1$ can be lifted to a surjection $\Psi : P[T]
\surj I'$.

Applying (\ref{sub}), to $\Psi \ot A(T)$ and $\Phi\ot A(T)$, we get a
surjection $\gD:P(T) \surj IA(T)$ such that $\gD \ot
A(T)/IA(T)=\Phi\ot A(T)/IA(T)=\phi \ot A(T)/IA(T)$.  By (\ref{4.9}),
$\psi$ can be lifted to a surjection $\gT:P[T] \surj I$. This proves
the result.
$\hfill \square$ 
\end{proof}

In case of regular domain, we have the following subtraction and
addition principles.  We give a proof of the
Subtraction principle. The Addition principle can be proved similarly.

\begin{proposition}\label{sub1}
(Subtraction principle) Let $A$ be a regular domain containing an
infinite field $k$ and let $I,J$ be two comaximal ideals of
$A[T]$. Let $P=Q\op A$ be a projective $A$-module of rank $n \geq \dd
A[T]-\hh (I\cap J) +2$. Assume that $\Phi:P[T]\surj I$ and $\Psi :P[T]
\surj I\cap J$ are two surjections such that $\Phi\ot A[T]/I=\Psi\ot
A[T]/I$. Then there exists a surjection $\gD:P[T]\surj J$ such that
$\gD\ot A[T]/J=\Psi \ot A[T]/J$.
\end{proposition}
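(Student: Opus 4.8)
The plan is to reduce Proposition \ref{sub1} to the affine Subtraction Principle (\ref{sub}) by passing to the ring $A(T)$ and then descending back to $A[T]$ via the $\CJ$-comaximal lifting machinery of Lemma \ref{4.6} and the monic-polynomial tools already available. First I would dispose of the case $\hh(I\cap J)=n$, which should follow directly from the corresponding result in \cite{bk}; so assume $\hh(I\cap J)<n$. Composing with an automorphism of $Q\op A$ (permissible, as the conclusion is stated up to such automorphisms), and using that $n\geq \dd A[T]-\hh(I\cap J)+2$ forces $\dd(A[T]/(I\cap J))\leq n-2$, I would arrange $P=Q\op A^2$ by invoking Serre (\ref{serre}). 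The key reduction, as in the proof of (\ref{4.9}) and (\ref{4.13}), is to use (\ref{BRS})/\cite[3.5]{brs1} to further assume $J\cap A\subset \CJ(A)$ after replacing $J$ appropriately; one must be careful that this reduction is compatible with the hypothesis $\Phi\ot A[T]/I=\Psi\ot A[T]/I$, but this is the standard bookkeeping already used in \cite{bk}.

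Next I would apply the Moving Lemma (\ref{moving}) to the surjection $\Psi$ (or rather to the induced surjection onto $J/J^2\cdot(\text{something})$) to replace $J$ by a generically nicer ideal: lift to $\Psi:P[T]\surj J\cap J'$ with $\hh J'\geq n$ and $J'$ comaximal with $I\cap J$ (if $\hh J'>n$ then $J'=A[T]$ and there is nothing more to do). Then I would pass to $A(T)$: by Bhatwadekar--Keshari's affine Subtraction Principle (\ref{sub}), applied to $\Phi\ot A(T)$ and $\Psi\ot A(T)$ over the ring $A(T)$, whose dimension drops by one so that the height inequalities in (\ref{sub}) are satisfied, I obtain a surjection $\gD_0:P(T)\surj JA(T)$ with $\gD_0\ot A(T)/JA(T)=\Psi\ot A(T)/JA(T)$. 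The remaining task is to descend $\gD_0$ to a surjection $P[T]\surj J$ lifting the prescribed data modulo $J$; this is exactly the situation handled by Lemma \ref{4.6} (applied with the ideal $J$ which, after the reduction, is comaximal with $\CJ(A)A[T]$ and has the right height), possibly after first using \cite[3.9]{brs1} to promote the relevant mod-$J^2$ surjection to a mod-$J^2T$ surjection so that the hypotheses of (\ref{4.6}) apply verbatim.

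Finally, to recover the original $J$ (before the Moving Lemma replacement $J\rightsquigarrow J\cap J'$), I would run one more application of the affine Subtraction Principle (\ref{sub}), exactly as in the last paragraph of the proof of Lemma \ref{4.6}: peel off the auxiliary ideal $J'$ to land on $J$ itself, checking that the height bounds $\dd A[T]-\hh J'\leq n-3$ and $\dd A[T]-\hh J\leq n-3$ hold because $\hh J'=n>\hh J$. I expect the main obstacle to be the consistency of the boundary/residue conditions through the chain of reductions — ensuring at each step that the chosen lift agrees with $\Psi$ (or $\Phi$) modulo the relevant ideal, so that the final $\gD$ genuinely satisfies $\gD\ot A[T]/J=\Psi\ot A[T]/J$ — rather than any single deep ingredient; all the heavy lifting is already packaged in (\ref{moving}), (\ref{sub}), (\ref{4.6}), (\ref{mrs}) and the cited results of \cite{brs1, bk}. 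The addition principle counterpart is then proved by the symmetric argument, glueing rather than peeling, using (\ref{add}) in place of (\ref{sub}).
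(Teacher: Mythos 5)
There is a genuine gap, and it sits exactly where you wave at ``the standard bookkeeping''. To descend from $A(T)$ to $A[T]$ you ultimately need the hypothesis of (\ref{4.9}) (or, in your variant, of (\ref{4.6}) after the promotion step): a surjection $\theta:P[T]\surj J/(J^2T)$ lifting the surjection $\ol\psi:P[T]\surj J/J^2$ induced by $\Psi$. You say you will get this ``possibly after first using \cite[3.9]{brs1} to promote the relevant mod-$J^2$ surjection to a mod-$J^2T$ surjection'', but \cite[3.9]{brs1} is not free: it requires a surjection $P\surj J(0)$ that agrees with $\ol\psi(0)$ modulo $J(0)$, and such a surjection does not exist a priori. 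This is the heart of the paper's proof and it is absent from your proposal. The paper first uses the infinitude of $k$ to pick $\gl\in k$ with $T-\gl$ outside all associated primes of $I$ and of $J$, translates so $\gl=0$ (this guarantees $\hh I(0)\geq \hh I$ and $\hh J(0)\geq \hh J$, or the specialized ideals become the unit ideal), and then applies the affine Subtraction Principle (\ref{sub}) over $A$ itself to the specialized surjections $\Phi(0):P\surj I(0)$ and $\Psi(0):P\surj I(0)\cap J(0)$ to manufacture a surjection $\ga:P\surj J(0)$ compatible with $\ol\psi(0)$; only then does \cite[3.9]{brs1} give $\theta$. Note that this specialization step is also the only place the hypothesis ``$k$ infinite'' enters; your argument never uses it, which is a warning sign. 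After $\theta$ is in hand, the paper does what you propose in your middle step: apply (\ref{sub}) over $A(T)$ to $\Phi\ot A(T)$ and $\Psi\ot A(T)$, and then invoke (\ref{4.9}) to get $\gD:P[T]\surj J$ lifting $\theta$, whence $\gD\ot A[T]/J=\Psi\ot A[T]/J$.

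Two secondary points. First, your proposed reduction via (\ref{BRS}) to $J\cap A\subset \CJ(A)$ is not justified in this setting: \cite[3.5]{brs1} is a statement about lifting a single surjection onto $I/(I^2T)$, and it is not clear how to carry the second surjection $\Phi$ and the agreement condition $\Phi\ot A[T]/I=\Psi\ot A[T]/I$ through that localization-patching argument; the paper avoids this entirely. Second, your detour through Lemma (\ref{4.6}) is both unnecessary and problematic: (\ref{4.6}) needs $J+\CJ A[T]=A[T]$ and works modulo $J^2$, whereas nothing in your reductions arranges that comaximality (indeed the BRS reduction pushes $J\cap A$ \emph{into} $\CJ(A)$), and the extra moving-lemma replacement $J\rightsquigarrow J\cap J'$ followed by a second subtraction is not needed once you have $\theta$ — (\ref{4.9}) applies directly to $J$, as in the paper.
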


\begin{proof} 
Let $P_1,\cdots,P_r$ be the associated prime ideals of $I$ and
$Q_1,\cdots,Q_s$ be the associated prime ideals of $J$. As $k$ is
infinite, we can choose $\gl\in k$ such that $T-\gl\notin
(\cup_{1}^{r} P_i)\cup (\cup_{1}^{s}Q_j)$.  If $T-\gl$ is a unit modulo the
ideal $I$, then $I(\gl)=A$; a similar conclusion holds for
$J(\gl)$. In the case when $T-\gl$ is not a unit modulo $I$ or $J$, we
see that $T-\gl$ is a non-zerodivisor modulo $I$ as well as $J$. In
this case, $\hh (I,T-\gl)=\hh I+1$ and $\hh (J,T-\gl)=\hh J+1$. As a
consequence, $\hh I(\gl)\geq \hh I$ and $\hh J(\gl)\geq \hh J$.
Replacing  $T$ by $T-\gl$, we can take $\gl$ to be $0$. 

Note that $\Psi$ induces a surjection $\ol \psi:P[T] \surj J/J^2$. We
have induced surjections $\Phi(0):P\surj I(0)$ and $\Psi(0):P\surj
I(0)\cap J(0)$.

If $J(0)=A$, then $\ol\psi$ can be lifted to a surjection from $ P[T]$
to $ J/(J^2T)$. We now assume $J(0)$ to be a proper ideal.  If
$I(0)=A$, then $\Psi(0)$ is a surjection from $P$ to $J(0)$. Since
$\ol \psi(0)=\Psi(0)\ot A/J(0)$, by (\cite{brs1}, Remark 3.9)
$\ol\psi$ can be lifted to a surjection from $ P[T]$ to $J/(J^2T)$.

Now assume that $I(0),J(0)$ both are proper ideals of $A$. We have,
$\dd A -\hh I(0)\leq \dd A[T]-1-\hh I\leq n-3$.  Similarly, $\dd
(A/J(0))\leq n-3$. Applying Subtraction principle (\ref{sub}) to
surjections $\Phi(0)$ and $\Psi(0)$, we get a surjection $\alpha :
A\surj J(0)$ such that $\alpha\otimes A/J(0)=\Psi(0)\otimes A/J(0) = \ol
\psi(0)$. Consequently, by (\cite{brs1}, Remark 3.9), $\ol\psi$ can be
lifted to a surjection from $ P[T]$ to $ J/(J^2T)$.

Therefore, in any case, $\ol\psi$ can be lifted to a
surjection, say, $\theta : P[T]\surj J/(J^2T)$.

We now go to the ring $A(T)$ and consider surjections $\Phi\otimes
A(T)$ and $\Psi\otimes A(T)$.  Again, applying (\ref{sub}), we can find a
surjection $\beta : P\otimes A(T)\surj JA(T)$ such that $\beta\otimes
A(T)/JA(T)=\Psi\otimes A(T)/JA(T)$. Clearly, $\beta$ lifts
$\gt$. Therefore, by (\ref{4.9}), we get a map $\Delta : P[T]\surj J$
such that $\Delta$ lifts $\theta$.  It is easy to see that $\Delta
\otimes A[T]/J=\Psi\otimes A[T]/J$. This proves the result. 
$\hfill \square$ 
\end{proof}

\begin{proposition}\label{add1}
(Addition principle) Let $A$ be a regular domain containing an infinite field
$k$. Let $I,J$ be two comaximal ideals of $A[T]$ and let $P=Q\op R$
be a projective $A$-module of rank $n\geq \dd A[T] - \hh (I\cap J) +2$.
Let $\Phi:P[T]\surj I$ and $\Psi: P[T]\surj J$ be two
surjections. Then there exists a surjection $\gD:P[T]\surj I\cap J$
such that $\Phi\ot A[T]/I=\gD\ot A[T]/I$ and $\Psi\ot A[T]/J =\gD\ot A[T]/J$.
\end{proposition}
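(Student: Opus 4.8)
The plan is to mimic the proof of the Subtraction principle \ref{sub1} verbatim, making only the obvious changes needed to pass from ``subtraction'' to ``addition''. As in that proof, I would first arrange, by choosing $\gl \in k$ avoiding the (finitely many) associated primes of $I$ and of $J$ and replacing $T$ by $T-\gl$, that $T$ is a non-zerodivisor modulo $I$ and modulo $J$; this guarantees $\hh I(0) \geq \hh I$, $\hh J(0) \geq \hh J$, and hence $\hh(I(0)\cap J(0)) \geq \hh(I\cap J)$, so that the numerical hypothesis $n \geq \dd A[T] - \hh(I\cap J) + 2$ survives specialization at $T=0$. If $\hh(I\cap J) = n$ the result is already \cite[4.13]{bk} combined with the (classical) affine addition principle, so I would assume $\hh(I\cap J) < n$.

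Next, by (\ref{BRS}) applied to $I\cap J$ we may assume $(I\cap J)\cap A \subset \CJ(A)$, and then, since $\dd A/((I\cap J)\cap A) \leq \dd A - \hh(I\cap J) + 1 = \dd A[T] - \hh(I\cap J) \leq n-2$, Serre's theorem (\ref{serre}) lets us assume $P$ has a unimodular free summand, i.e. $P = Q \op A$ as required by (\ref{sub}) and (\ref{add}). Now the two surjections $\Phi$ and $\Psi$ induce $\ol\varphi : P[T] \surj (I\cap J)/(I\cap J)^2$ (using the comaximality $\CI_1/\CI_1^2 \op \CI_2/\CI_2^2$ splitting). The first half of the argument produces a lift of $\ol\varphi$ to a surjection $\gt : P[T] \surj (I\cap J)/((I\cap J)^2 T)$: restricting to $T=0$ we have $\Phi(0):P\surj I(0)$ and $\Psi(0):P\surj J(0)$; if either ideal at $0$ is all of $A$ the lift is immediate, and otherwise $\dd A/I(0) \leq n-3$ and $\dd A/J(0) \leq n-3$, so the affine Addition principle (\ref{add}) furnishes a surjection $\ga : P \surj I(0)\cap J(0)$ agreeing with $\ol\varphi(0)$ modulo $(I(0)\cap J(0))$, and then \cite[Remark 3.9]{brs1} lifts $\ol\varphi$ across the $\BA^1$.

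For the second half, I would pass to $A(T)$ and apply (\ref{add}) to $\Phi\ot A(T)$ and $\Psi\ot A(T)$, noting that $\dd A(T) - \hh((I\cap J)A(T)) \leq \dd A[T] - 1 - \hh(I\cap J) \leq n-3$, to obtain a surjection $\gb : P(T) \surj (I\cap J)A(T)$ that agrees with $\Phi\ot A(T)$ modulo $IA(T)$ and with $\Psi\ot A(T)$ modulo $JA(T)$; in particular $\gb$ lifts $\gt \ot A(T)$. Finally (\ref{4.9}), applied to the ideal $I\cap J$ of $A[T]$ (its numerical hypothesis $n \geq \dd A[T] - \hh(I\cap J) + 2$ is exactly our assumption) together with the surjection $\gt$ and its lift $\gb$ over $A(T)$, produces a surjection $\gD : P[T] \surj I\cap J$ lifting $\gt$; a routine check shows $\gD \ot A[T]/I = \Phi \ot A[T]/I$ and $\gD \ot A[T]/J = \Psi \ot A[T]/J$.

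The only place where genuine care is needed — and hence the main obstacle — is bookkeeping the height and dimension inequalities so that each invocation of (\ref{add}), (\ref{4.9}) and \cite[4.13]{bk} is legitimate: one must check that specializing at $T=0$ and passing to $A(T)$ do not destroy the bound $n + \hh(\,\cdot\,) \geq \dd(\cdot) + 3$, which is precisely what the non-zerodivisor arrangement at the start is designed to ensure. Everything else is a faithful transcription of the proof of (\ref{sub1}) with ``$J$'' replaced by ``$I\cap J$'' and the Subtraction principle replaced by the Addition principle.
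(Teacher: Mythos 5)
Your proof follows exactly the route the paper intends: the paper gives a proof only of (\ref{sub1}) and states that the Addition principle ``can be proved similarly'', and your argument is that faithful transcription --- choose $\gl$ avoiding the associated primes of $I$ and $J$ so that heights do not drop at $T=0$, use the affine Addition principle (\ref{add}) at $T=0$ together with \cite[3.9]{brs1} to lift the induced map to a surjection $\gt: P[T]\surj (I\cap J)/((I\cap J)^2T)$, apply (\ref{add}) again over $A(T)$ (where $T$ being a unit makes the resulting surjection a lift of $\gt\ot A(T)$), and finish with (\ref{4.9}). The numerical bookkeeping you indicate is correct, since $\hh I(0)\geq \hh I$, $\hh J(0)\geq \hh J$ and $\dd A(T)\leq \dd A[T]-1$.

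Two dispensable steps should be removed, one of which is actually unjustified as written. First, your disposal of the case $\hh(I\cap J)=n$ via \cite[4.13]{bk} is not legitimate here: that theorem requires $A$ to be essentially of finite type over an infinite perfect field, whereas (\ref{add1}) only assumes $A$ is a regular domain containing an infinite field. Fortunately the restriction to $\hh(I\cap J)<n$ is never used in your main argument (the inequalities you need, e.g. $n+\hh(I(0)\cap J(0))\geq \dd A+3$ and $n\geq \dd A[T]-\hh(I\cap J)+2$, hold for every height, and (\ref{4.9}) internally handles the maximal-height case), so simply delete the case split --- this also matches the paper's proof of (\ref{sub1}), which does not case-split on height. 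Second, the reduction via (\ref{BRS}) and (\ref{serre}) is superfluous: $P=Q\op A$ is already a hypothesis, and (\ref{BRS}) is stated for a given surjection onto $I/(I^2T)$, which you do not yet possess at that point; since nothing later uses $(I\cap J)\cap A\subset\CJ(A)$, that step should go as well.
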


%$$$$$$$$$$$$$$$$$$$$$$$$$$$$$$$$$$$$$$$$$$$$$$$$$$$$$$$$$$$$$$$$$

\section{Segre classes}
Let $A$ be a commutative Noetherian ring of dimension $d$ and let
$I\subset A$ be an ideal such that $\mu(I/I^2)=n$ where $n+\hh I\geq
d+3$.  Let $I=(a_1,\cdots,a_n)+I^2$ be given. It is natural to ask
under what condition these local generators can be lifted to a set of
global generators of $I$.  In other words, when can we find
$b_1,\cdots,b_n$ such that $I=(b_1,\cdots,b_n) $ where $a_i-b_i\in
I^2$?

When $\hh I=n$, this has been accomplished in \cite{brs3}, where an
abelian group $E^n(A)$ (called the $n$-th Euler class group of $A$) is
defined and corresponding to the local data for $I$ an element in this
group (called the Euler class) is attached and it is shown that a
desired set of global generators exists for $I$ if the corresponding
Euler class in $E^n(A)$ is zero.

In this section we consider the case when $\hh I$ is not necessarily
equal to $n$.  Given $I$ and $\omega_I:(A/I)^n\surj I/I^2$ (local
data) we shall associate an element $s^n(I,\omega_I)$ in the Euler
class group $E^n(A)$. We call this the \emph{$n$-th Segre class} of
the pair $(I,\omega_I)$. It will be shown that $s^n(I,\omega_I)=0$ in
$E^n(A)$ if and only if $\omega_I$ can be lifted to a surjection
$\theta : A^n\surj I$ (global generators). Further, when $\hh I=n$,
the Segre class coincides with the Euler class of $(I,\omega_I)$.

We may note that the above question was considered in \cite{drs} under
the hypotheses : $d=n=\mu(I/I^2)\geq 3$ and $\hh I\geq 2$.  For
further motivation the reader may look at \cite{drs}, which in turn is
inspired by Murthy's definition of Segre classes \cite{mu} .

Before proceeding to define the Segre class, we first quickly recall
the definition of the $n$-th Euler class group $E^n(A)$ from
\cite{brs3}.

\definition\label{loweuler} 
Let $A$ be a Noetherian ring of dimension
$d$ and let $n$ be an integer with $2n\geq d+3$.  A \emph{local
orientation} $\omega_I$ of an ideal $I\subset A$ of height $n$ is a
surjective homomorphisms from $(A/I)^n$ to $I/I^2$, up to an
$\CE_n(A/I)$-equivalence
(here $\CE_n$ stands for the group of  elementary matrices). 
Let $L^n(A)$ denote the set of all pairs $(I,\omega_I)$, where $I$ is an
ideal of height $n$ such that $\Spec (A/I)$ is connected and $\omega_I:(A/I)^n
\surj I/I^2$ is a local orientation.  
Let $G^n(A)$ denote the free abelian group generated by $L^n(A)$.
Suppose $I$ is an ideal of height $n$ and $\omega_I:(A/I)^n \surj I/I^2$ is a
local orientation. By (\cite{brs3}, Lemma 4.1), there is a unique
decomposition $I=\cap_1^r I_i$, such that $I_i$'s are pairwise
comaximal ideals of height $n$ and $\Spec (A/I_i)$ is connected.  Then
$\omega_I$ naturally induces local orientations 
$\omega_{I_i}:(A/I_i)^n \surj I_i/I_i^2$.
Denote $(I,\omega):=\sum (I_i,\omega_i) \in G^n(A)$. 
We say a local orientation $\omega_I:(A/I)^n\surj I/I^2$ is \emph{global} 
if $\omega_I$ can be lifted to a surjection $\Omega : A^n\surj I$.
Let $H^n(A)$ be the subgroup of $G^n(A)$ generated by all $(I,\omega_I)$ 
where $\omega_I$ is a global orientation. 
The Euler class group of codimension $n$ cycles is defined as
$E^n(A):=G^n(A)/H^n(A)$.

%$$$$$$$$$$$$$$$$$$$$$

Now let $J$ be an ideal of $A$ such that $J/J^2$ is generated by $n$
elements, where $n+\hh J\geq \dd A+3$. Given a surjection $\omega_J :
(A/J)^n \surj J/J^2$, we will define the $n^{th}$ Segre class of
$(J,\omega_J)$, denoted by $s^n(J,\omega_J)$, as an element of the
$n^{th}$ Euler class group $E^n(A)$, as follows:

\definition\label{seg}
By $(\ref{moving})$, $\omega_J$ can be lifted to a surjection $\ga : A^n
\surj J\cap J_1$, where $J_1$ is an ideal of height $\geq n$ with
$J+J_1=A$. If $J_1=A$, then we define the $n^{th}$ Segre class
$s^n(J,\omega_J)=0$ in $E^n(A)$. If $J_1$ is a proper ideal of height $n$,
then $\ga$ induces a surjection $\omega_{J_1} : (A/J_1)^n \surj
J_1/J_1^2$. We define the $n^{th}$ Segre class
$s^n(J,\omega_J)=-(J_1,\omega_{J_1})$ in $E^n(A)$.

We need to show that $s^n(J,\omega_J)$ is well defined as an element
of $E^n(A)$. The argument is along the same line as in \cite{drs}.  We
give a sketch for the convenience of the reader.

Let $J_{2}$ be another ideal of $A$ of height $\geq n$ such that
$J+J_{2}=A$ and $\omega_J$ has a lift to $\beta:A^n \surj J\cap J_1$.
If $J_2=A$ then it is easy to check using addition and subtraction
principles in the last section that $(J_1,\omega_{J_1})=0$ in $E(A)$.
Therefore assume that $J_2$ is a proper ideal.  Let
$\omega_{J_{2}}:(A/J_{2})^{n}\twoheadrightarrow J_{2}/J_{2}^2$ be the
local orientation induced by $\gb$.  Using Moving lemma \ref{moving}
we can find an ideal $J_{3}$ of $A$ of height $n$ and a local
orientation $\omega_{J_{3}}$ such that : (i) $J_{3}$ is comaximal with
each of $J$, $J_{1}$ and $J_{2}$, (ii)
$(J_{1},\omega_{J_{1}})+(J_{3},\omega_{J_{3}})=0$ in $E(A)$.  Again
applying Lemma \ref{moving} we can find an ideal $J_{4}$ of $A$ of
height $n$ such that $J\cap J_{4}$ is generated by $n$ elements and
$J_{4}$ is comaximal with each of $J$, $J_{1}$, $J_{2}$ and $J_{3}$.

Now addition principle implies that the ideal $J_{1}\cap J_{3}\cap
J\cap J_{4}$ is generated by $n$ elements.  Since $J_{1}\cap J$ is
generated by $n$ elements, by the subtraction principle (
\ref{sub}) it follows that $J_{3}\cap J_{4}$ is generated by $n$
elements with appropriate set of generators.  Now consider $J_{2}\cap
J_{3}\cap J\cap J_{4}$. A similar chain of arguments will show that
$J_{2}\cap J_{3}$ is $n$-generated by the appropriate set of
generators.  Keeping track of the generators, it is easy to see that
this implies $(J_{2},\omega_{J_{2}})+(J_{3},\omega_{J_{3}})=0$ in
$E(A)$.

Therefore, $(J_{1},\omega_{J_{1}})=(J_{2},\omega_{J_{2}})$ in $E(A)$
and $s(J,\omega_{J})$ is well defined.

\remark It is clear from the definition of the Segre class that
$s^n(J,\omega_J)=(J,\omega_J)$ in $E^n(A)$ if $\hh J=n$.

\begin{theorem}
Let $J$ be an ideal of a ring $A$ and let $\omega_J : (A/J)^n \surj J/J^2$
be a surjection, where $n\geq \dd A-\hh J+3$. Suppose $s^n(J,\omega_J)=0$ in
$E^n(A)$. Then $\omega_J$ can be lifted to a surjection $\gt:A^n\surj J$.
\end{theorem}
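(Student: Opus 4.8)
The plan is to unwind the definition of the Segre class and run the construction of $(\ref{seg})$ in reverse, using the moving lemma together with the subtraction principle $(\ref{sub})$. First I would apply Lemma $(\ref{moving})$ (with $f=1$ and $K=A$, or more carefully with $K$ chosen so that the resulting $J_1$ avoids any finite set of primes we care about) to lift $\omega_J$ to a surjection $\alpha : A^n \surj J\cap J_1$, where $J_1$ has height $\geq n$ and $J+J_1=A$. If $J_1=A$ then $\alpha$ itself is the desired lift $\theta : A^n \surj J$ and we are done. So assume $J_1$ is proper of height exactly $n$; then $\alpha$ induces a local orientation $\omega_{J_1}:(A/J_1)^n \surj J_1/J_1^2$, and by definition the hypothesis $s^n(J,\omega_J)=0$ says precisely that $(J_1,\omega_{J_1})=0$ in $E^n(A)$.

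The heart of the argument is to translate the relation $(J_1,\omega_{J_1})=0$ in $E^n(A)$ into the statement that the pair $(J_1,\omega_{J_1})$ is \emph{global}, i.e. $\omega_{J_1}$ lifts to a surjection $\beta : A^n \surj J_1$. This is the step I expect to be the main obstacle: a priori, $(J_1,\omega_{J_1})=0$ only tells us that $(J_1,\omega_{J_1})$ is a \emph{difference} of pairs coming from global orientations, so one must use the moving lemma to replace $J_1$ by a comaximal ideal $J_1'$ (avoiding the associated primes of $J$ and $J_1$) with $(J_1,\omega_{J_1})+(J_1',\omega_{J_1'})=0$ and $J_1'$ global, and then invoke the addition principle $(\ref{add})$ to conclude $J_1$ itself is global — this is exactly the bookkeeping carried out to show $s^n$ is well defined in the discussion after $(\ref{seg})$, and I would cite that reasoning. (The height inequality $n\geq \dd A-\hh J+3$, i.e. $n+\hh J\geq \dd A+3$, together with $\hh J_1\geq n$ and hence $\dd A/J_1 \leq \dd A - n \leq n-3$, is what makes the addition and subtraction principles applicable here.)

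Once we have $\beta : A^n \surj J_1$ lifting $\omega_{J_1}$, the endgame is the subtraction principle. We have two surjections onto ideals of $A$: $\beta : A^n \surj J_1$ and $\alpha : A^n \surj J\cap J_1$, and by construction $\alpha$ and $\beta$ agree modulo $J_1$ (both induce $\omega_{J_1}$) — possibly after adjusting $\beta$ within its $\CE_n(A/J_1)$-class, which is harmless by Proposition $(\ref{b-roy})$. Since $J$ and $J_1$ are comaximal, $A^n = A^{n-1}\op A$, $\dd A/J \leq n-2$, and $\dd A/(J\cap J_1) \geq \dd A - n$ gives $n + \hh(J\cap J_1)\geq \dd A+3$ as well, Proposition $(\ref{sub})$ applies to $\beta$ and $\alpha$ and yields a surjection $\theta : A^n \surj J$ with $\theta \ot A/J = \alpha\ot A/J = \omega_J$. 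That is the required lift, which completes the proof. The only genuinely delicate points are the choice of $K$ in the moving lemma so that all the ideals produced are pairwise comaximal and avoid the relevant associated primes, and verifying the various dimension/height inequalities at each application of $(\ref{sub})$ and $(\ref{add})$; both are routine given the standing hypothesis $n+\hh J\geq \dd A+3$.
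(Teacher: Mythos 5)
Your skeleton is the same as the paper's: lift $\omega_J$ (via the moving lemma / the very definition in (\ref{seg})) to $\alpha:A^n\surj J\cap J_1$ with $\hh J_1\geq n$ and $J+J_1=A$, deduce from $s^n(J,\omega_J)=0$ that $(J_1,\omega_{J_1})=0$ in $E^n(A)$, show $\omega_{J_1}$ is global, and finish by the subtraction principle (\ref{sub}) applied to the lift $\beta:A^n\surj J_1$ and $\alpha:A^n\surj J\cap J_1$ (your dimension checks there, and the remark that an $\CE_n(A/J_1)$-adjustment is harmless by (\ref{b-roy}), are fine).

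The gap is in the step you yourself flag as the main obstacle. The paper disposes of it by citing the theorem of Bhatwadekar--Sridharan \cite{brs3}: since $\hh J_1=n$ and $2n\geq n+\hh J\geq \dd A+3$, the pair $(J_1,\omega_{J_1})$ is an honest element of $E^n(A)$ in the sense of (\ref{loweuler}), and by \cite{brs3} its vanishing implies $\omega_{J_1}$ lifts to a surjection $A^n\surj J_1$. Your proposed substitute does not prove this. Using the moving lemma to produce $J_1'$ comaximal with $J_1$ such that $(J_1\cap J_1',\omega)$ is global only yields $(J_1',\omega_{J_1'})=-(J_1,\omega_{J_1})=0$ in $E^n(A)$; to then assert ``$J_1'$ is global'' is exactly the statement being proved, now for $J_1'$ instead of $J_1$ --- the argument is circular. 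Nor does the well-definedness discussion after (\ref{seg}) help: it shows independence of the Segre class from the choice of the lift $\alpha$, not that a pair whose class vanishes (i.e.\ lies in the subgroup generated by global pairs) is itself global; that implication is the substantial content of \cite{brs2,brs3} (an argument of the type recorded in Lemma \ref{mphil}) and cannot be reduced to the addition/subtraction principles of Section 3 alone. Once this step is replaced by the citation to \cite{brs3}, the rest of your argument is correct and coincides with the paper's proof.
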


\proof
Let $s^n(J,w_J)=(J_{1},\omega_{J_{1}})$ in $E^n(A)$ where $\omega_J$ has a lift 
$\ga:A^n \surj J\cap J_1$ and $\omega_{J_1}=\ga \ot A/J_1$.
Now $s^n(J,\omega_{J})=0$ implies $(J_{1},\omega_{J_{1}})=0$ in $E(A)$.
Therefore, by \cite{brs3},
$\omega_{J_{1}}$ is a global orientation of $J_{1}$.
This means that there exist a lift $\gp :A^n\surj J_1$ of $\omega_{J_1}$.
 Now we can apply the subtraction principle (\ref{sub}) to see that $\omega_{J}$ has the 
 desired lift to a surjection $\theta:A^n\surj J$ . This proves the theorem.
$\hfill \square$ 

The following result, on additivity of the Segre classes, is easy and
we leave the proof to the reader.

\begin{theorem}
(Addition) Let $J_1,J_2$ be two comaximal ideals of a ring $A$ and let
$\omega_{J_1}:(A/J_1)^n\surj J_1/J_1^2$ and $\omega_{J_2}:(A/J_2)^n \surj
J_2/J_2^2$ be two surjections, where $n\geq \dd A - \hh (J_1\cap J_2) +3$. 
Then $s^n(J_1\cap J_2,\omega_{J_1\cap
J_2})=s^n(J_1,\omega_{J_1})+s^n(J_2,\omega_{J_2})$ in $E^n(A)$, where
$\omega_{J_1\cap J_2} : (A/(J_1\cap J_2))^n \surj (J_1\cap J_2)/(J_1\cap
J_2)^2$ is the surjection induced by $\omega_{J_1}$ and $\omega_{J_2}$.
\end{theorem}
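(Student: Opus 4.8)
The plan is to obtain the identity from a single application of the addition principle (\ref{add}), together with the fact --- established in the discussion following Definition \ref{seg} --- that the Segre class does not depend on the choices made in its construction. We may assume $J_1$ and $J_2$ are both proper (otherwise the assertion is immediate). All three Segre classes make sense: from $J_1+J_2=A$ we get $\hh J_i\ge \hh(J_1\cap J_2)$, hence $n+\hh J_i\ge \dd A+3$ and $\dd A/J_i\le \dd A-\hh J_i\le n-3$ for $i=1,2$; in particular $2n\ge \dd A+3$.

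First I would choose the auxiliary ideals computing $s^n(J_1,\omega_{J_1})$ and $s^n(J_2,\omega_{J_2})$ so as to be pairwise comaximal. Applying the moving lemma (\ref{moving}) to $\omega_{J_1}$ (with $f=1$ and $K=J_2$) gives a surjection $\ga_1:A^n\surj J_1\cap J_1'$ with $\hh J_1'\ge n$ and $J_1'$ comaximal with both $J_1$ and $J_2$; by definition $s^n(J_1,\omega_{J_1})=-(J_1',\omega_{J_1'})$, where $\omega_{J_1'}=\ga_1\ot A/J_1'$ (and $J_1'=A$ is permitted, in which case this class is $0$). Applying (\ref{moving}) again to $\omega_{J_2}$, this time with $K=J_1\cap J_1'\cap J_2$ (which satisfies $\dd A/K\le n-1$, since $\dd A/J_1'\le \dd A-n\le n-3$), we obtain $\ga_2:A^n\surj J_2\cap J_2'$ with $\hh J_2'\ge n$, with $J_2'$ comaximal with each of $J_1,J_1',J_2$, and $s^n(J_2,\omega_{J_2})=-(J_2',\omega_{J_2'})$ where $\omega_{J_2'}=\ga_2\ot A/J_2'$. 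Now $J_1,J_1',J_2,J_2'$ are pairwise comaximal, so $J_1\cap J_1'$ and $J_2\cap J_2'$ are comaximal; moreover $\hh\big((J_1\cap J_1')\cap(J_2\cap J_2')\big)=\hh(J_1\cap J_2)$ because $\hh(J_1'\cap J_2')\ge n\ge \hh(J_1\cap J_2)$, so $n+\hh\big((J_1\cap J_1')\cap(J_2\cap J_2')\big)\ge \dd A+3$. Writing $A^n=A^{n-1}\op A$, the hypotheses of (\ref{add}) are met.

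Applying (\ref{add}) to $\ga_1$ and $\ga_2$ yields a surjection $\gb:A^n\surj (J_1\cap J_2)\cap(J_1'\cap J_2')$ with $\gb\ot A/(J_1\cap J_1')=\ga_1\ot A/(J_1\cap J_1')$ and $\gb\ot A/(J_2\cap J_2')=\ga_2\ot A/(J_2\cap J_2')$. Restricting to the relevant quotients, $\gb\ot A/J_i=\omega_{J_i}$ and $\gb\ot A/J_i'=\omega_{J_i'}$ for $i=1,2$. Since $J_1+J_2=A$, the two identities $\gb\ot A/J_i=\omega_{J_i}$ combine to give $\gb\ot A/(J_1\cap J_2)=\omega_{J_1\cap J_2}$. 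Thus $\gb$ is a lift of $\omega_{J_1\cap J_2}$ of precisely the type furnished by the moving lemma in Definition \ref{seg}, with complementary ideal $J_1'\cap J_2'$ (of height $\ge n$, comaximal with $J_1\cap J_2$). By the well-definedness of the Segre class --- and the trivial case in which $J_1'\cap J_2'=A$ --- we get $s^n(J_1\cap J_2,\omega_{J_1\cap J_2})=-(J_1'\cap J_2',\omega_{J_1'\cap J_2'})$ in $E^n(A)$. Finally, decomposing $J_1'$ and $J_2'$ into their connected components and invoking $\gb\ot A/J_i'=\omega_{J_i'}$, one checks $(J_1'\cap J_2',\omega_{J_1'\cap J_2'})=(J_1',\omega_{J_1'})+(J_2',\omega_{J_2'})$ in $G^n(A)$, and hence in $E^n(A)$. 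Putting the three Segre-class identities together gives $s^n(J_1\cap J_2,\omega_{J_1\cap J_2})=-(J_1',\omega_{J_1'})-(J_2',\omega_{J_2'})=s^n(J_1,\omega_{J_1})+s^n(J_2,\omega_{J_2})$.

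The genuine work is entirely bookkeeping: positioning $J_1'$ and $J_2'$ to be pairwise comaximal via the moving lemma (verifying at each step that the ideals one moves away from have dimension $\le n-1$, where the inequality $2n\ge\dd A+3$ is used), and carefully tracking the induced local orientations through every reduction map so that the decomposition of $(J_1'\cap J_2',\omega_{J_1'\cap J_2'})$ into $(J_1',\omega_{J_1'})$ and $(J_2',\omega_{J_2'})$ is the correct one in $G^n(A)$. I do not expect any conceptual obstacle beyond this --- which is presumably the reason the authors leave the proof to the reader.
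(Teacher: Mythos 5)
Your proof is correct, and it is essentially the argument the authors have in mind: the paper states the result without proof ("we leave the proof to the reader"), and the intended route is exactly yours --- use the moving lemma (\ref{moving}) to produce auxiliary ideals $J_1'$, $J_2'$ of height $n$ pairwise comaximal with everything in sight, apply the addition principle (\ref{add}) to the two lifts, and then invoke the well-definedness of the Segre class from (\ref{seg}) together with the decomposition $(J_1'\cap J_2',\omega_{J_1'\cap J_2'})=(J_1',\omega_{J_1'})+(J_2',\omega_{J_2'})$ in $G^n(A)$. The dimension/height estimates and the orientation bookkeeping you record (including the observation that a proper $J_i'$ automatically has height exactly $n$, so the classes lie in $G^n(A)$) are the only points needing care, and they check out.
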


Let $A$ be a ring of dimension $d$ and $J\subset A$ be an ideal such
that $\mu(J/J^2)=n$ where $n+\hh J\geq d+3$. It is almost a trivial
application of the Nakayama lemma to see that if $A$ is semilocal,
then any $\omega_J:(A/J)^n\surj J/J^2$ can be lifted to a surjection
$\theta:A^n\surj J$. One may wonder if there exists any non-trivial
example of a ring for which such a phenomenon holds. We give one such
below.

\example Let $(A,\mathfrak{m},k)$ be a regular local ring which is
either (i) essentially of finite type over an infinite field; or (ii)
essentially of finite type and smooth over an excellent DVR $(V,\pi)$
such that $k$ is infinite and is separably generated over $V/\pi V$.
Let $\dd A=d+1$ and $f\in \mathfrak{m}\setminus \mathfrak{m}^2$ be a
regular parameter. Let $J\subset A_f$ be an ideal such that
$\mu(J/J^2)=n$ where $n+\hh J\geq d+3$. Then any
$\omega_J:(A_f/J)^n\surj J/J^2$ can be lifted to a surjection
$\theta:A_f^n\surj J$. This follows from \cite[4.2, 5.2]{d4}, since it
is proved there that $E^n(A_f)=0$.

\rmk Let $A$ be a regular domain containing an infinite field $k$ and
let $I\subset A[T]$ be an ideal such that $\mu(I/I^2)=n$, where $n
+\hh I = \text{dim} A[T]+2$. Let $\omega_I:(A[T]/I)^n\surj I/I^2$ be a
given surjection.  Following the same method as in (\ref{seg}), one
can define the $n^{th}$ Segre class $s^n(I,\omega_I)$ as an element of
$E^n(A[T])$ and prove results similar as above using (\ref{sub1},
\ref{add1}).

%$$$$$$$$$$$$$$$$$$$$$$$$$$$$$$$$$$$$$$$$$$$$$$$%%%%%%%%

\section{Homotopy returns}
In the final decade of the last century, Nori suggested a definition
of the ($n$th) Euler class group of a smooth affine domain $R$ of
dimension $n$ and associated for a projective $R$-module $P$ of rank
$n$ (with trivial determinant) an element in this group, called the
Euler class of $P$, and asked whether the vanishing of the Euler class
is the precise obstruction for $P$ to decompose as $P\simeq Q\oplus
R$, for some $R$-module $Q$. In \cite{brs1}, Bhatwadekar-Sridharan
settled Nori's question in the affirmative. They achieved this with a
different (but equivalent to the one proposed by Nori) definition of
the Euler class group which seems to be a bit easier to work
with. (For the record, we may note that \cite[Theorem 3.8]{brs1},
stated below as Theorem \ref{hom}, turned out to be crucial to
establish the equivalence). Moreover, their definition paved the way for 
further generalization to the Euler class group of a Noetherian ring $R$.
All the papers written after \cite{brs1} in this area are based on  the 
definition of Bhatwadekar-Sridharan 
 and within all the development Nori's
definition has been lost. We believe that Nori's definition has its own 
intrinsic appeal and our aim in this section is to investigate it closely.
We first recall how this definition  works for a smooth affine domain (for
which it was formulated). We then show why Nori's definition does not naturally 
extend to singular varieties. Finally, we provide a reformulation
of Nori's definition which extends to general Noetherian rings.

We first quote the following result from \cite{brs1} which is a
special case of Question \ref{open}.  This theorem will be crucially
used to recover results from \cite{brs1} using Nori's definition of
the Euler class group.

\begin{theorem}\cite[3.8]{brs1}\label{hom}
Let $R$ be a smooth affine domain of dimension $n\geq 3$ over an
infinite perfect field $k$. Let $I\subset R[T]$ be an ideal of height
$n$ and $P$ be a projective $R$-module of rank $n$. Assume that 
we are given a surjection $\varphi : P[T]\surj I/(I^2T)$. Then there exists
a surjection $\Phi : P[T]\surj I$ such that $\Phi$ lifts $\varphi$.
\end{theorem}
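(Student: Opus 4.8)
The plan is to reduce Theorem \ref{hom} to the previously established general theorem \ref{4.13} by exploiting the fact that a smooth affine domain of dimension $n$ over an infinite perfect field is in particular a regular domain essentially of finite type over an infinite field. First I would observe that here $\dd R[T] = n+1$ and $\hh I = n$, so that the rank hypothesis of \ref{4.13} becomes $n \geq \dd R[T] - \hh I + 2 = (n+1) - n + 2 = 3$, which holds by assumption. Thus the hypotheses of \ref{4.13} are satisfied with $A = R$, and applying it directly to the given surjection $\varphi : P[T] \surj I/(I^2T)$ yields a surjection $\Phi : P[T] \surj I$ lifting $\varphi$.

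In other words, Theorem \ref{hom} is now a special case of Theorem \ref{4.13} (indeed the case $\hh I = n$, which is exactly \cite[4.13]{bk} together with \cite{d4}), so strictly speaking nothing new needs to be proved; the point of restating it is that it will be used as an input in Section 5. I would make this explicit: the only things to check are the numerical inequality above and that a smooth affine domain over an infinite perfect field falls under the class of rings allowed in \ref{4.13} (regular, essentially of finite type over an infinite field), both of which are immediate.

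The one place where a reader might want a word of caution is that in the statement of \ref{hom} the ideal $I$ has height exactly $n$, whereas \ref{4.13} allows $\hh I < n$ as well; but since we are in the equality case, the cleaner route is simply to cite \cite[3.8]{brs1} itself (or, equivalently, \cite[4.13]{bk} with \cite{d4}), since that is the historical source and since the full strength of the machinery of Section 3 is not needed when $\hh I = n$. I do not anticipate a genuine obstacle here: the ``hard part'' was already carried out in \cite{brs1} and \cite{bk}, and the present section merely records the statement in the form convenient for recovering the Bhatwadekar--Sridharan results via Nori's homotopical definition.
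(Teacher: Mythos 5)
Your proposal is correct, and it essentially matches what the paper does: Theorem \ref{hom} is not proved in the paper at all, but quoted from \cite[3.8]{brs1} as an external input for Section 5. Your numerical check is right — with $\dd R[T]=n+1$ and $\hh I=n$ the rank hypothesis of Theorem \ref{4.13} becomes $n\geq 3$, and a smooth affine domain over an infinite perfect field is a regular domain essentially of finite type over an infinite field — so \ref{hom} is indeed formally a special case of \ref{4.13}. The one caveat, which you already half-acknowledge, is that this derivation buys nothing beyond the citation: in the equality case $\hh I=n$ the proof of \ref{4.13} immediately defers to \cite[4.13]{bk} together with \cite{d4}, which is itself the generalization of \cite[3.8]{brs1}, so both routes terminate in the same external literature. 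Your conclusion that the cleaner course is simply to cite \cite[3.8]{brs1} (equivalently \cite[4.13]{bk} with \cite{d4}) is exactly the paper's choice; it is also the presentation that best avoids any appearance of circularity, since \ref{hom} is used in Section 5 only to recover the results of \cite{brs1} through Nori's homotopical definition, not as an ingredient in the proof of \ref{4.13}.
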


We now start with both the definitions of Euler class groups as given
in \cite{brs1}.  For simplicity, in this section we consider rings
with dimension $n \geq 3$. Further, as we shall only talk about the
$n^{th}$ Euler class group of a ring $R$ of dimension $n$, we shall
write $E(R)$ instead of $E^n(R)$.

\medskip

\noindent
{\bf Euler class group :} 
\noindent
Let $R$ be a smooth affine domain of dimension $n$ over an infinite
perfect field $k$.  Let $B$ be the set of pairs $(m,\omega_m)$ where
$m$ is a maximal ideal of $R$ and $\omega_m :(R/m)^n\surj m/m^2$. Let
$G$ be the free abelian group generated by $B$.  Let $J=m_{1}\cap
\cdots \cap m_r$, where $m_i$ are maximal ideals of $R$. Any
$\omega_{J}:(R/J)^n\surj J/J^2$ induces surjections $\omega_i
:(R/m_i)^n\surj m_i/m_i^2$ for each $i$. We associate $(J,\omega_J):=
\sum_{1}^{r}(m_i,\omega_i)\in G$.

\definition\label{nori} (Nori) Let $S$ be the set of elements
$(I(1),\omega(1))-(I(0),\omega(0))$ of $G$ where (i) $I\subset R[T]$
is a local complete intersection ideal of height $n$; (ii) Both $I(0)$
and $I(1)$ are reduced ideals of height $n$; (iii) $\omega(0)$ and
$\omega(1)$ are induced by $\omega: (R[T]/I)^n\surj I/I^2$. Let $H$ be
the subgroup generated by $S$. The Euler class group $E(R)$ is defined
as $E(R):=G/H$.

\begin{definition} \label{brs}(Bhatwadekar-Sridharan) Let $H_1$ be the 
subgroup of $G$ generated by those elements $(J,\omega_J)$ of $G$ for
which $\omega_J$ has a lift to a surjection $\theta: R^n\surj J$.  The
Euler class group $E(R)$ is defined as $E(R):=G/H_1$.
\end{definition}

\remark Let $(J,\omega_J)=(I(0),\omega(0))-(I(1),\omega(1))\in S$ and
let $\ol\sigma\in SL_n(R/J)$. Then we have
$\omega_J\ol\sigma:(R/J)^n\surj J/J^2$. Since $\dd R/J=0$, we have
$SL_n(R/J)=\CE_n(R/J)$ where $\CE_n(R/J)$ is the elementary subgroup
of $SL_n(R/J)$. As $\CE_n(R)\lra \CE_n(R/J)$ is surjective, there
exists a preimage $\gs$ of $\ol\gs$. Since $\sigma$ is elementary,
there exists $\Delta\in GL_n(R[T])$ such that $\Delta (0)=I_n$ (the
identity matrix) and $\Delta (1)=\gs$. Let $\omega'=\omega\circ
(\Delta\ot R[T]/I): (R[T]/I)^n\surj I/I^2$.  We note that $J\cap
I(0)=I(1)$ and since all the ideals are reduced, it follows that
$J+I(0)=R$. It is now easy to check that $(J,
\omega_J\ol\sigma)=(I(1),\omega'(1))-(I(0),\omega'(0))$.  Therefore,
$(J, \omega_J\ol\sigma)\in S$.

\remark In Nori's definition, the relations are given by homotopy with
respect to the affine line $\BA^1$.  In this section we shall focus
mainly on relations given by homotopy (hence the title of this
section). We shall revisit some of the results from \cite{brs1}.

\begin{lemma}\cite[4.5]{brs1}\label{ci}
Let $R$ be a smooth affine domain of dimension $n$ and $J\subset R$ be
a reduced ideal of height $n$. Assume that $J=(a_1,\cdots,a_n)$ and
$\omega_J:(R/J)^n\surj J/J^2$ is induced by $a_1,\cdots,a_n$. Then
there exists a local complete intersection ideal $I\subset R[T]$ and a
surjection $\omega:(R[T]/I)^n\surj I/I^2$ such that $I(0),I(1)$ are
both reduced ideals of height $n$ in $R$ and $(J,\omega_J)=
(I(0),\omega(0))-(I(1),\omega(1))$ in $G$.
\end{lemma}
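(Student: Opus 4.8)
The plan is to realize the "homotopy" between $(J,\omega_J)$ and the trivial element by explicitly building a complete intersection ideal $I\subset R[T]$ that interpolates between $J$ (placed at $T=1$, say) and something trivial (at $T=0$), and then correcting it so that both specializations are reduced ideals of height $n$. First I would use the hypothesis $J=(a_1,\dots,a_n)$ to write down a natural candidate: the ideal $I_0=(a_1+T b_1,\dots,a_n+T b_n)$ for suitable $b_i\in R$, or more simply the ideal generated by the Koszul-type expressions coming from the surjection $R[T]^n\surj J[T]\to J/J^2$. The point of the parameter $T$ is that $I_0(1)=J$ while $I_0(0)$ is some ideal generated by $b_1,\dots,b_n$; choosing the $b_i$ to be a generic linear combination (using that $k$ is infinite, as in the proof of \ref{sub1}) we can arrange $I_0(0)$ to have height $n$. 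The subtler requirement is that $I_0$ itself be a local complete intersection of height $n$ in $R[T]$, not just that its two fibers have the right height.

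The key technical step is a general position / prime avoidance argument over $R[T]$. Here I would invoke the Eisenbud–Evans type result (\ref{EE}) together with the fact that $R$ is a smooth affine domain over an infinite perfect field, so that $R[T]$ is regular of dimension $n+1$ and a generic ideal generated by $n$ elements is a local complete intersection of height $n$ whose singular locus we can control. Concretely: start from a lift $P[T]=R[T]^n\surj I/I^2$ — wait, here $P$ is free — start from $R[T]^n\to J[T]\subset R[T]$, perturb the map by adding $T$ times a generic $R$-linear functional to make the image ideal $I$ have height exactly $n$ by (\ref{EE}), and then a further generic perturbation (legitimate since $k$ is infinite) forces $I$ to be reduced in codimension $n$ and to be a local complete intersection. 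The reducedness of the two fibers $I(0)$ and $I(1)$ is then obtained by choosing the perturbing functionals so that $\lambda=0$ and $\lambda=1$ avoid the finitely many bad primes of the relevant discriminant loci — this is exactly the $T-\lambda$ avoidance trick already used in the proof of Proposition \ref{sub1}, here applied to the two specific values $0$ and $1$ rather than to a generic $\lambda$, which is possible because $k$ is infinite and the obstruction is a proper closed subset of $\mathbb{A}^1$.

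The main obstacle I anticipate is precisely pinning down that one can make \emph{both} specializations $I(0)$ and $I(1)$ reduced of height $n$ \emph{simultaneously}, while keeping $I$ a local complete intersection of height $n$ and keeping $I(1)=J$ (or at least $(I(1),\omega(1))=(J,\omega_J)$) on the nose. One has only finitely many parameters to play with (the coefficients of the generic linear combinations), and three conditions to satisfy at once, so the argument has to be organized so that each genericity condition is an open dense condition on the same parameter space; this requires care in the order of the perturbations and careful bookkeeping of which primes are being avoided at $T=0$ versus $T=1$. Once that is arranged, the equality $(J,\omega_J)=(I(0),\omega(0))-(I(1),\omega(1))$ in $G$ follows by construction, with $\omega(0),\omega(1)$ induced by the chosen generators of $I$, and using (if necessary) the elementary-equivalence remark following Definition \ref{nori} to match $\omega(1)$ with the given $\omega_J$ up to an element of $\CE_n(R/J)$, which does not change the class in $G$ after passing to $E(R)$ — or, if we want the statement literally in $G$, by choosing the generators of $I$ to specialize to $a_1,\dots,a_n$ at $T=1$ from the outset. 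I would present the construction, then verify height and reducedness of $I$, then of the two fibers, then read off the identity in $G$.
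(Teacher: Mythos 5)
Your outline misses the step that is the actual content of the lemma, and the identity you claim ``follows by construction'' is false for the homotopy you build. (Note also that the paper itself does not reprove this statement: Remark \ref{ci1} simply refers to \cite[4.5]{brs1} for the proof.) The group $G$ is free on the pairs $(m,\omega_m)$, and every symbol $(I',\omega_{I'})$ is by definition a sum of basis elements with coefficient $+1$. Hence the required equality $(J,\omega_J)=(I(0),\omega(0))-(I(1),\omega(1))$ in $G$ forces $(I(0),\omega(0))=(J,\omega_J)+(I(1),\omega(1))$, i.e.\ the supports of $J$ and $I(1)$ must be disjoint (so $J+I(1)=R$), $I(0)$ must equal $J\cap I(1)$, and the orientations must be induced compatibly on the nose; no cancellation is available in a free abelian group. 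Your candidate $I$ has one fiber equal to $J$ and the other fiber a \emph{generic} ideal $(b_1,\dots,b_n)$ comaximal with $J$, so the identity cannot hold; the only degenerate escape, making the other fiber the unit ideal (e.g.\ $I=(a_1,\dots,a_{n-1},(1-a_n)T+a_n)$), is excluded because the lemma, and membership in $S$, require both $I(0)$ and $I(1)$ to be proper reduced ideals of height $n$. Thus the real work is to produce an auxiliary reduced ideal $K$ of height $n$, comaximal with $J$, together with a local complete intersection $I\subset R[T]$ whose fibers are $I(0)=J\cap K$ and $I(1)=K$, with generators chosen so that the induced orientations are exactly $\omega_J$ (plus $\omega_K$) at $T=0$ and $\omega_K$ at $T=1$. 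This auxiliary ideal appearing at \emph{both} ends of the homotopy is what \cite[4.5]{brs1} constructs, and nothing in your proposal produces it.

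A second, related problem is the choice of genericity tool. Eisenbud--Evans (\ref{EE}) controls only heights; it cannot give reducedness of $I(0)$, $I(1)$ (nor the local complete intersection property in the form needed). For reduced zero-dimensional fibers one must use Swan's Bertini theorem, which is precisely where smoothness of $R$ and the infinite perfect field enter; the paper flags this substitution explicitly (``using Swan's Bertini theorem instead of Eisenbud--Evans'') in (\ref{homci}) and (\ref{mov1}), and Remark \ref{ci1} records that for general Noetherian rings the statement survives only after dropping ``reduced'' and ``local complete intersection'' --- exactly because then height considerations suffice. Finally, your fallback of adjusting $\omega(1)$ by an element of $\CE_n(R/J)$ is not available as stated: the asserted equality is in $G$, not in $E(R)$, so the compatibility of orientations has to be built into the choice of the generators of $J\cap K$ and $K$ from the start, as you only note parenthetically.
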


\remark\label{ci1} For a proof, see \cite{brs1} and note that the
above lemma also works for a Noetherian ring $R$ if the terms
``reduced" and ``local complete intersection" are dropped.

In the following important proposition, (\ref{hom}) is crucially
used. Note that (\ref{hom}) does not hold for affine domains which are
not smooth (for an example, see \cite[6.4]{brs1}).

\begin{proposition}\label{homci}
Let $(I(1),\omega(1))-(I(0),\omega(0))\in S$. Then there exists an
ideal $I'\subset R[T]$ of height $n$ such that $I'=(g_1,\cdots,g_n)$
and $
(I(1),\omega(1))-(I(0),\omega(0))=(I'(1),\omega'(1))-(I'(0),\omega'(0))\in
S$ where $\omega':(R[T]/I')^n\surj I'/I'^2$ is induced by
$g_1,\cdots,g_n$.
\end{proposition}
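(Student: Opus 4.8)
The plan is to start with the homotopy datum $(I(1),\omega(1))-(I(0),\omega(0))\in S$, coming from a local complete intersection ideal $I\subset R[T]$ of height $n$ together with $\omega:(R[T]/I)^n\surj I/I^2$, and to manufacture an ideal $I'\subset R[T]$ of height $n$ which is generated by $n$ global elements $g_1,\dots,g_n$ and which differs from $I$ by the relations in $S$. The first move is to pass to a suitable open set: localize at $1+J$ where $J=I\cap R$, or more simply use the fact that $\dim R[T]/I\le 1$ (since $\hh I=n=\dim R[T]-1$), so Serre's theorem (\ref{serre}) does not directly apply but the usual genericity arguments do. The key input is (\ref{hom}) (= \cite[3.8]{brs1}): since $R$ is a smooth affine domain of dimension $n\ge 3$ over an infinite perfect field and $\hh I=n$, any surjection $P[T]\surj I/(I^2T)$ lifts to a surjection $P[T]\surj I$. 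Here $P=R^n$ and the surjection $\omega$ on $I/I^2$ can be pushed to one on $I/(I^2T)$ at the cost of modifying at $T=0$; more precisely, one first arranges $\omega$ so that $\omega(0)$ lifts to a set of $n$ generators of $I(0)$ (a reduced height-$n$ ideal, hence via \cite{brs3}/\cite{brs1} one uses that $(I(0),\omega(0))$ itself need not be trivial — so this step must be handled by a \emph{moving} argument rather than outright lifting).

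The cleaner route, and the one I would actually carry out, is: apply Moving Lemma (\ref{moving}), or rather its $R[T]$-analogue built from (\ref{4.13}) and the subtraction principle (\ref{sub1}), to replace $I$ by $I\cap I''$ where $I''\subset R[T]$ has height $n$, is comaximal with $I$, and $I\cap I''$ is generated by $n$ elements $g_1,\dots,g_n$ lifting $\omega$. Since $\hh(I\cap I'')=n$ and the module is free of rank $n$ with $n+\hh(I\cap I'')=2n\ge n+3=\dim R[T]+2$, Theorem \ref{4.13} applies to lift the induced surjection on $(I\cap I'')/((I\cap I'')^2T)$ — once we have arranged that such a surjection exists, which in turn needs $(I\cap I'')(0)$ and $(I\cap I'')(1)$ to be images of $R^n$ compatibly. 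That is where \cite{brs3} on Euler classes of $R$ enters: because the class of $(I''(0),\cdot)-(I''(1),\cdot)$ in $E(R)$ is what we are free to add, and the whole point of $S$ is that such terms are killed. So the plan is to (i) produce $I''$ by the moving lemma, (ii) observe that $(I(1),\omega(1))-(I(0),\omega(0))$ and $(I'(1),\omega'(1))-(I'(0),\omega'(0))$ with $I'=I\cap I''$ differ by terms of the form coming from $I''$ evaluated at $0$ and $1$, (iii) use \ref{ci} (and \ref{ci1}) together with the addition/subtraction principles to absorb those into $S$, and (iv) read off that $I'=(g_1,\dots,g_n)$ with $\omega'$ induced by the $g_i$, still giving an element of $S$ because $I'$ is a complete intersection of the right height with $I'(0),I'(1)$ reduced of height $n$ (the reducedness being arranged by a generic choice of the monic polynomial / linear change of $T$, exactly as in the proof of \ref{sub1} where $\gl\in k$ is chosen avoiding the associated primes).

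The main obstacle, I expect, is bookkeeping of the orientations through all the moves: when we replace $I$ by $I\cap I''$ and then lift via \ref{4.13}, we must be certain that the resulting $n$ generators $g_1,\dots,g_n$ induce \emph{exactly} the orientation $\omega'$ we claim (up to $\CE_n$), and that the difference between the original homotopy class and the new one is a $\bz$-linear combination of elements already in $S$ — not merely equal in $E(R)=G/H$, since \ref{homci} is a statement about $S$ itself, used precisely to understand $H$. Concretely, one has to check that the ideal $I''$ can be chosen so that $I''(0)$ and $I''(1)$ are reduced height-$n$ ideals and $(I''(0),\omega''(0))-(I''(1),\omega''(1))\in S$, which is where \ref{ci} is invoked in the reverse direction (it produces, for a complete intersection ideal with given generators, a homotopy realizing it). The secondary technical point is ensuring all height-$n$ and dimension estimates hold so that \ref{4.13}, \ref{sub1}, \ref{add1} are applicable with $P=R^n$: here $\dim R[T]=n+1$, $\hh I=n$, so $n\ge \dim R[T]-\hh I+2=3$ is exactly the standing hypothesis, and everything is tight — so no room for slack, and each application must be checked against its precise numerical condition.
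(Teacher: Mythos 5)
There is a genuine gap, and it is exactly at the point you flag but do not resolve. Proposition \ref{homci} asserts an \emph{identity in the free abelian group} $G$: the new difference $(I'(1),\omega'(1))-(I'(0),\omega'(0))$ must literally equal the old one, and must itself lie in $S$. Your construction replaces $I$ by $I\cap I''$ with $I''\subset R[T]$ an arbitrary height-$n$ ideal produced by a moving-lemma argument over $R[T]$ (via \ref{4.13}, \ref{sub1}), and then proposes to ``absorb'' the discrepancy $(I''(1),\omega''(1))-(I''(0),\omega''(0))$ into $S$ using \ref{ci} and the addition/subtraction principles. But $S$ is only a subset of $G$, not a subgroup, and what such absorption yields is at best an equality modulo the subgroup $H$ generated by $S$, i.e.\ an equality in $E(R)=G/H$. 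That is strictly weaker than the statement and is useless (indeed circular) for the way \ref{homci} is subsequently used, namely to analyze $S$ and $H$ themselves in \ref{addsub} and \ref{equal}.

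The missing idea — which is the content of the paper's proof, quoted from \cite[4.3]{brs1} — is that the moving must be done by an ideal \emph{extended from} $R$. Using Swan's Bertini theorem in place of Eisenbud--Evans in the moving lemma, one finds a \emph{reduced} ideal $K\subset R$ of height $n$ with $K=(b_1,\cdots,b_n)+K^2$, $I+KR[T]=R[T]$, and $I'=I\cap K[T]=(g_1,\cdots,g_n)$ where $g_i-f_i\in I^2$ and $g_i-b_i\in K^2R[T]$ (the lifting step here is where \ref{hom} enters, so your instinct about \cite[3.8]{brs1} is right). Because $K[T]$ is extended, $I'(0)=I(0)\cap K$ and $I'(1)=I(1)\cap K$, and since $g_i(0)\equiv b_i\equiv g_i(1)$ modulo $K^2$, the $K$-component carries the \emph{same} orientation $\omega_K$ at $T=0$ and at $T=1$; hence in $G$ one has $(I'(1),\omega'(1))=(I(1),\omega(1))+(K,\omega_K)$ and $(I'(0),\omega'(0))=(I(0),\omega(0))+(K,\omega_K)$, so the two differences agree on the nose, while $I'$ is a height-$n$ complete intersection whose fibres at $0$ and $1$ are reduced of height $n$, so the new element is in $S$. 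Without forcing the residual ideal to be of the form $K[T]$ with $K$ reduced in $R$, your argument cannot produce the required cancellation in $G$, and the extra machinery (\ref{4.13}, \ref{sub1}, \ref{add1}) does not repair this.
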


\proof The proposition essentially is a restatement of
\cite[4.3]{brs1} and we shall urge the reader to see \cite{brs1} for
the proof.  Retaining the same notations, we only identify the
terms. Let $\omega:(R[T]/I)^n\surj I/I^2$ be induced by
$I=(f_1,\cdots,f_n)+I^2$.  Using Swan's Bertini theorem instead of
Eisenbud-Evans theorem in Moving lemma, there exists a reduced ideal
$K\subset R$ of height $n$ such that : (1) $K=(b_1,\cdots,b_n)+K^2$;
(2) $I+KR[T]=R[T]$; (3) $I'=I\cap K[T]=(g_1,\cdots,g_n)$ where
$g_i-f_i\in I^2$ and $g_i-b_i\in K^2R[T]$.

Let $\omega':(R[T]/I')^n\surj I'/I'^2$ be induced by $g_1,\cdots,g_n$. It is now easy to 
see that the proposition follows from (1)-(3) above.
$\hfill \square$

\begin{proposition}\label{addsub}
The set $S\subset G$ has the following properties : 
\begin{enumerate}
\item
If $x\in S$, then $-x\in S$.
\item
If $(J_1,\omega_{J_1}),(J_2,\omega_{J_2})\in S$, then
$(J_1,\omega_{J_1})-(J_2,\omega_{J_2})\in S$.
\item
Let $(J_1,\omega_{J_1}),(J_2,\omega_{J_2})\in G$ where $J_1+J_2=R$. If
any two of the elements
$(J_1,\omega_{J_1}),$ $(J_2,\omega_{J_2}),$ $(J_1\cap J_2,\omega_{J_1\cap
J_2})$ belong to $S$, then so does the third.
\end{enumerate}
\end{proposition}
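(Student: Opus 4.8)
The plan is to prove Part (1) directly and to reduce Parts (2) and (3) to a single construction: intersecting two ``homotopies'' (height-$n$ local complete intersection ideals of $R[T]$ carrying an orientation, as in the definition of $S$), after reversing one of them when a difference is involved. For Part (1), I would apply the $R$-algebra automorphism $\tau$ of $R[T]$ with $\tau(T)=1-T$: if $x=(I(1),\omega(1))-(I(0),\omega(0))\in S$, then $\tau(I)$ is again a height-$n$ local complete intersection ideal, $\tau(I)(0)=I(1)$ and $\tau(I)(1)=I(0)$ are reduced of height $n$, and the orientation $\omega\circ\tau$ of $\tau(I)$ restricts to $\omega(1)$ at $T=0$ and to $\omega(0)$ at $T=1$; hence $-x\in S$.

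Before doing (2) and (3) I would establish a normal form. If $J$ is a reduced ideal of height $n$ with $(J,\omega_J)\in S$, writing $(J,\omega_J)=(I(1),\omega(1))-(I(0),\omega(0))$ in $G$ and comparing both sides in the \emph{free} abelian group $G$ (a reduced height-$n$ ideal contributes each of its prime divisors exactly once) forces $I(0)$ and $J$ to be comaximal, $I(1)=I(0)\cap J$, and $\omega(1)$ to restrict to the orientation $\omega(0)$ of $I(0)$ on the first factor and to $\omega_J$ on the second. Thus $(J,\omega_J)\in S$ amounts to a homotopy $I\subset R[T]$ running from an auxiliary reduced height-$n$ ideal $K:=I(0)$, comaximal with $J$, to $K\cap J$, with orientation restricting to $\omega_J$ on the $J$-component. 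I would then note that $I$ may be replaced by $I\cap K_0[T]$ for any reduced height-$n$ ideal $K_0$ with $K_0[T]+I=R[T]$, without changing the element of $G$ realized (the $K_0$-contributions to the two endpoints cancel in the difference); since $\dd R/(I\cap R)\le 1\le n-1$, the Moving Lemma~\ref{moving} produces such a $K_0$ comaximal with any prescribed finite list of ideals, and Proposition~\ref{homci} (which rests on Lemma~\ref{ci} and Theorem~\ref{hom}) further lets one take the homotopy to be generated by $n$ elements.

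For Parts (2) and (3) I would, in each case, represent $(J_1,\omega_{J_1})$ by a homotopy $I_1$ from $K_1$ to $K_1\cap J_1$ and $(J_2,\omega_{J_2})$ by a homotopy $I_2$ from $K_2$ to $K_2\cap J_2$, using the normal form to choose $K_1$ comaximal with $K_2$ and $J_2$, and $K_2$ comaximal with $J_1$; then perturb $I_1$, keeping its two endpoints (hence the element of $G$ it realizes) fixed, so that $I_1+I_2=R[T]$. Then $I_3:=I_1\cap I_2$ is a height-$n$ local complete intersection ideal with reduced endpoints $K_1\cap K_2$ and $K_1\cap K_2\cap J_1\cap J_2$ (the four ideals being pairwise comaximal), and tracking the induced orientation in $G$ shows $(I_3(1),\cdot)-(I_3(0),\cdot)=(J_1,\omega_{J_1})+(J_2,\omega_{J_2})$; when $J_1+J_2=R$ this is $(J_1\cap J_2,\omega_{J_1\cap J_2})$, giving the addition case of~(3). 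For the subtraction case of~(3) (given $(J_1,\omega_{J_1})$ and $(J_1\cap J_2,\omega_{J_1\cap J_2})$ in $S$, recover $(J_2,\omega_{J_2})$) I would run the same argument with a homotopy realizing $(J_1\cap J_2,\omega_{J_1\cap J_2})$ and with the \emph{reverse} $\tau(I_1)$ of the homotopy realizing $(J_1,\omega_{J_1})$: the intersection then realizes $(J_1\cap J_2,\omega_{J_1\cap J_2})-(J_1,\omega_{J_1})=(J_2,\omega_{J_2})$. The remaining case of~(3) is symmetric, and Part~(2) is the identical construction applied to $I_1$ and $\tau(I_2)$, realizing $(J_1,\omega_{J_1})-(J_2,\omega_{J_2})$; here one only needs $K_1$ comaximal with $K_2,J_2$ and $K_2$ comaximal with $K_1,J_1$, so $J_1$ and $J_2$ themselves need not be comaximal.

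The free-group bookkeeping is routine; the substance is in the two moving-lemma steps. The first --- putting an element of $S$ in the normal form, with an $n$-generated homotopy whose auxiliary ideal is well placed --- is the only place Theorem~\ref{hom} enters, through~\ref{homci}. The second, and the hard part, is the perturbation of a single homotopy \emph{with its endpoints frozen} so that two homotopies become comaximal in $R[T]$, with the perturbed ideal still a height-$n$ local complete intersection having the prescribed reduced endpoints: this is a Bertini/moving-lemma argument over $\BA^1_R$ of the kind in~\cite{brs1} (and once comaximality is in hand the addition principle~\ref{add1} may be invoked at the intersection step if one wants $I_3$ to be $n$-generated), and it is exactly here that $n\ge 3$ (equivalently $\dd R[T]/I_i\le n-1$) is used. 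One should also check the small formal point that ``the orientation restricted to a prime component'' is well defined: over the reduced zero-dimensional quotients $R/\p$, local orientations differing by an element of $\SL_n=\CE_n$ are identified in $G$, so no choices are involved.
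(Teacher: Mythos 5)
Your part (1) and your ``normal form'' observation (that an element $(J,\omega_J)\in S$ forces, in the free group $G$, the shape $I(1)=I(0)\cap J$ with $I(0)+J=R$ and compatible endpoint orientations) agree with the paper. But the crux of your plan for (2) and (3) is the step you yourself flag as ``the hard part'': perturbing the homotopy $I_1\subset R[T]$ \emph{with both endpoints and their orientations frozen} so that $I_1+I_2=R[T]$, and then intersecting the two homotopies. No such lemma is available in the toolkit you invoke: the Moving Lemma~\ref{moving} (and Swan's Bertini, and \cite[2.14]{brs2}) moves an ideal by replacing it with $J\cap J'$ --- it does not produce a \emph{different} height-$n$ lci ideal of $R[T]$ with the \emph{same} two fibres at $T=0,1$; and Proposition~\ref{homci} changes the endpoints (by throwing in an auxiliary $K$), which is harmless for the class in $G$ but does not give comaximality of the two homotopies as ideals of $R[T]$. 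Nor can you substitute the $E(R[T])$-level moving argument used later in the paper (in the proof that $K\subset E(R)$ is a subgroup): Proposition~\ref{addsub} is a statement about the subset $S$ of the free group $G$, proved \emph{before} passing to the quotient, precisely so that it can be fed into Lemma~\ref{mphil}; replacing $I_1$ by an ideal equal to it only modulo $H$ (or $H_1$) would be circular there. So as written your argument has a genuine gap, and it is not clear the frozen-endpoint perturbation is even true without substantial new work.

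The paper's proof avoids this entirely by a different key idea, which is missing from your proposal: after \ref{homci} makes $I_1=(f_1,\dots,f_n)$ and $I_2=(g_1,\dots,g_n)$, the forced endpoint structure $I_i(1)=J_i\cap I_i(0)$, $J_i+I_i(0)=R$, together with $f_i(0)-f_i(1)\in I_1(0)^2$, lets one apply the subtraction principle (\ref{sub}) at the endpoints to conclude that $J_1=(a_1,\dots,a_n)$ and $J_2=(b_1,\dots,b_n)$ with these generators inducing $\omega_{J_1}$, $\omega_{J_2}$. Once the orientations are known to be global, the difference in (2) is realized by the explicit linear homotopy $\CI=(a_1T+b_1(1-T),\dots,a_nT+b_n(1-T))$, with $\CI(1)=J_1$, $\CI(0)=J_2$, and no comaximality of homotopies in $R[T]$ is ever needed; for the comaximal case of (3) one instead applies the addition principle to get $J_1\cap J_2=(c_1,\dots,c_n)$ compatibly with the orientations and then Lemma~\ref{ci}, and the remaining case of (3) follows formally from (2). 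If you want to salvage your intersection-of-homotopies approach, you would have to prove the frozen-endpoint comaximality lemma; the shorter route is the paper's reduction, via \ref{sub} and \ref{add}, to globally generated representatives.
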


\proof 
(1) Let $x=(I(1),\omega(1))-(I(0),\omega(0))\in S$ where
$I\subset R[T]$ is a local complete intersection ideal of height $n$
and $\omega :(R[T]/I)^n\surj I/I^2$ a surjection. Also $I(0),I(1)$ are
both reduced ideals of height $n$.  Consider the automorphism $\phi:
R[T]\lra R[Y]$ given by $T\mapsto Y=T-1$. Let $\CI=\phi(I)$ and
$\omega':R[Y]/\CI\surj \CI/\CI^2$ be the surjection corresponding to
$\omega$. Then it is easy to see that
$-x=(I(0),\omega(0))-(I(1),\omega(1))=(\CI(1),\omega'(1))-(\CI(0),\omega'(0))\in
S$.

(2) Let $(J_1,\omega_{J_1}),(J_2,\omega_{J_2})\in S$.  Then, there
exists local complete intersection ideals $I_1,I_2\subset R[T]$, each
of height $n$, and surjections $\omega_{i}:(R[T]/I_i)^n \surj I_i/I_i^2$,
$i=1,2$ such that $I_1(0), I_1(1),I_2(0),I_2(1)$ are all reduced
ideals of height $n$ and
$(J_i,\omega_{J_i})=(I_i(1),\omega_i(1))-(I_i(0),\omega_i(0))$ in $G$
for $i=1,2$.

In view of (\ref{homci}), we may assume that $I_1=(f_1,\cdots,f_n)$
and $\omega_1$ is induced by $f_1,\cdots,f_n$. Similarly,
$I_2=(g_1,\cdots,g_n)$ and $\omega_2$ is induced by $g_1,\cdots,g_n$.
Therefore, we have $I_1(0)=(f_1(0),\cdots,f_n(0))$ and $\omega_1(0)$ is
induced by $f_1(0),\cdots,f_n(0)$.  Similar conclusion holds for
$(I_1(1),\omega_1(1))$, $(I_2(0),\omega_2(0))$,
$(I_2(1),\omega_2(1))$.

We have $ J_1\cap I_1(0)=I_1(1)$, $J_2\cap I_2(0)=I_2(1)$. Note that  
$J_1+I_1(0)=R=J_2+I_2(0)$.

Since we have
$(J_i,\omega_{J_i})+(I_i(0),\omega_i(0))=(I_i(1),\omega_i(1))$, it
follows that $f_i(0)-f_i(1)\in I_1(0)^2$. Applying subtraction
principle (\ref{sub}), we conclude that $J_1=(a_1,\cdots,a_n)$ and
$\omega_{J_1}$ is induced by $a_1,\cdots,a_n$.  By a similar argument
it follows that $J_2=(b_1,\cdots,b_n)$ such that $\omega_{J_2}$ is
induced by $b_1,\cdots,b_n$.

Take $\CI=(a_1T+b_1(1-T),\cdots,a_nT+b_n(1-T))\subset R[T]$. Then
$\CI(0)=J_2$ and $\CI(1)=J_1$. Let $\omega_{\CI}:(R[T]/\CI)^n\surj
\CI$ be induced by $a_1T+b_1(1-T),\cdots,a_nT+b_n(1-T)$.  Therefore,
$(J_1,\omega_{J_1})=(\CI(1),\omega(1))$ and
$(J_2,\omega_{J_2})=(\CI(0),\omega(0))$.  This proves the result.

(3) First let $(J_1,\omega_{J_1}),(J_2,\omega_{J_2})\in S$. The proof
goes verbatim as in (2) above, except for the last paragraph. So, we
have $J_1=(a_1,\cdots,a_n)$, $J_2=(b_1,\cdots,b_n)$ and
$J_1+J_2=R$. By addition principle, $J_1\cap J_2=(c_1,\cdots,c_n)$
where $c_i-a_i\in J_1^2$ and $c_i-b_i\in J_2^2$. Further,
$\omega_{J_1\cap J_2}$ is induced by $c_1,\cdots,c_n$. It now follows
from (\ref{ci}) that $(J_1\cap J_2,\omega_{J_1\cap J_2})\in S$.

The other case follows from (2).
$\hfill \square$ 

\medskip

%Using a standard 
%general position argument we can assume that the ideal $(a_1,\cdots,a_{n-1})$ has height $n-1$ and is comaximal 
%with $J_2$. Take $\CI'=(a_1,\cdots,a_{n-1}, (1-a_n)T+a_n)\subset R[T]$ and $\CI=\CI'\cap J_2[T]$.  Then   
%$a_1,\cdots,a_{n-1}, (1-a_n)T+a_n$ and $b_1,\cdots,b_n$ naturally induce $\omega_{\CI}:(R[T]/\CI)^n\surj \CI/\CI^2$

Note that an element of the form $(I(1),\omega(1))-(I(0),\omega(0))$
{\it a priori} need not be of the form $(J,\omega_J)$, where $J\subset
R$ is a reduced ideal. But the above proposition inspires us to
consider the following subset of $G$ :
$$S'=\{(J,\omega_J)\in G\,|, \exists I\subset R[T]\,\,\text{and}\,\,\omega\,\,\text{such that}\,\,(J,\omega_J)=(I(1),\omega(1))-(I(0),\omega(0))\},$$
where the terms have usual meaning. We show 
that it is enough to work with $S'$.

\begin{lemma}\label{equal}
The subgroup of $G$ generated by $S$ is the same as that generated by $S'$.
\end{lemma}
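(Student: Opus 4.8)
The plan is to show $S \subseteq \langle S' \rangle$ and $S' \subseteq \langle S \rangle$, where $\langle \cdot \rangle$ denotes the subgroup of $G$ generated by the indicated set; since each is a generating set of the respective subgroup, the two subgroups then coincide. One inclusion is essentially immediate from Proposition \ref{addsub}: an arbitrary element of $S$ has the form $x = (I(1),\omega(1)) - (I(0),\omega(0))$. The elements $(I(1),\omega(1))$ and $(I(0),\omega(0))$ are, by definition, reduced ideals of height $n$ in $R$ equipped with induced orientations, i.e. they are genuine generators of $G$ sitting in $L^n$-type position. The point is that each of these is \emph{individually} of the form required for membership in $S'$ — one must exhibit, for a reduced height-$n$ ideal $J \subset R$ with orientation $\omega_J$, some $\CI \subset R[T]$ and $\omega$ with $(J,\omega_J) = (\CI(1),\omega(1)) - (\CI(0),\omega(0))$. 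This is exactly what Lemma \ref{ci} provides (reading it in the form $(J,\omega_J) = (I(0),\omega(0)) - (I(1),\omega(1))$ and then applying part (1) of Proposition \ref{addsub}, which says $S$ — and hence membership of the relevant pieces — is closed under negation). Thus $(I(1),\omega(1)) \in S'$ and $(I(0),\omega(0)) \in S'$, so $x \in \langle S' \rangle$, giving $S \subseteq \langle S' \rangle$.

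For the reverse inclusion $S' \subseteq \langle S \rangle$, take $(J,\omega_J) \in S'$, so $(J,\omega_J) = (I(1),\omega(1)) - (I(0),\omega(0))$ for some locally complete intersection ideal $I \subset R[T]$ of height $n$ with $I(0), I(1)$ reduced of height $n$ and $\omega$ as usual. But this says precisely that $(J,\omega_J)$ \emph{equals} an element of $S$ (the defining form of an element of $S$ in Definition \ref{nori}), hence lies in $\langle S \rangle$. So in fact $S' \subseteq S \subseteq \langle S \rangle$ trivially, and the only content of the lemma is the first inclusion $S \subseteq \langle S' \rangle$.

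The main obstacle, and the step deserving care, is the first inclusion — specifically, verifying that the two summands $(I(0),\omega(0))$ and $(I(1),\omega(1))$ appearing in a generator of $S$ each individually belong to $S'$. One must confirm that a reduced height-$n$ ideal $J \subset R$ together with an orientation $\omega_J$ can always be realized as $(\CI(1),\omega'(1)) - (\CI(0),\omega'(0))$: this is where Lemma \ref{ci} is invoked, but its hypothesis requires $J$ to be a \emph{complete intersection}, $J = (a_1,\dots,a_n)$, with $\omega_J$ induced by those generators. A general reduced height-$n$ ideal with a general orientation need not be given by global generators. However, when $(J,\omega_J)$ arises as a summand of an element of $S$, it comes with the relation $(J,\omega_J) = \pm\big((I(1),\omega(1)) - (I(0),\omega(0))\big)$ from the \emph{other} summand — equivalently, working modulo $\langle S' \rangle$ (or using Proposition \ref{homci} to replace $I$ by a globally generated $I'$ without changing the class in $S$), one may assume $I = (g_1,\dots,g_n)$, whence $I(0)$ and $I(1)$ are globally generated with orientations induced by the specializations of the $g_i$, and Lemma \ref{ci} applies to each. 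So the clean route is: given $x = (I(1),\omega(1)) - (I(0),\omega(0)) \in S$, first use Proposition \ref{homci} to rewrite $x$ with a globally generated ideal $I'$; then $I'(0)$ and $I'(1)$ are each globally generated reduced height-$n$ ideals, so by Lemma \ref{ci} each of $(I'(0),\omega'(0))$ and $(I'(1),\omega'(1))$ lies in $S'$; hence $x \in \langle S' \rangle$. Combined with $S' \subseteq S$, this yields $\langle S \rangle = \langle S' \rangle$. $\hfill \square$
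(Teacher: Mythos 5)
Your proposal is correct and takes essentially the same route as the paper: the containment $S'\subseteq S$ is immediate from the definitions, and for $S\subseteq\langle S'\rangle$ you, exactly as the paper does, first invoke Proposition \ref{homci} to replace $I$ by a globally generated ideal $I'$ and then apply Lemma \ref{ci} to each of $(I'(0),\omega'(0))$ and $(I'(1),\omega'(1))$ to exhibit them as elements of $S'$ (the sign being absorbed by the substitution $T\mapsto 1-T$, as you note). Your "clean route" paragraph is precisely the paper's argument, and your earlier observation that Lemma \ref{ci} needs the ideal to be a complete intersection with the orientation induced by the generators is the right reason why the detour through \ref{homci} is necessary.
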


\proof Clearly, from the definition, we have $S'\subset S$. Therefore,
it is enough to show that any $(I(1),\omega(1))-(I(0),\omega(0))\in S$
belongs to the subgroup generated by $S'$.

We may assume by (\ref{homci}) that $I\subset R[T]$ is a complete
intersection, say, $I=(f_1,\cdots,f_n)$ and $\omega$ is induced by
$f_1,\cdots,f_n$. Therefore, we have $I(1)=(f_1(1),\cdots,f_n(1))$,
$I(0)=(f_1(0),\cdots, f_n(0))$ and $\omega(1)$, $\omega(0)$ are
induced by these generators, respectively.

Applying (\ref{ci}) to the element $(I(1),\omega(1))$ of $G$ we can
see that there exists an ideal $\CI\subset R[T]$ of height $n$ and
$\mu:(R[T]/\CI)^n\surj \CI/\CI^2$ such that
$(I(1),\omega(1))=(\CI(1),\mu(1))-(\CI(0),\mu(0))$ in $G$.  Similarly,
we can conclude that
$(I(0),\omega(0))=(\CJ(1),\eta(1))-(\CJ(0),\eta(0))$ in $G$, where
$\CJ\subset R[T]$.  Therefore, $(I(1),\omega(1)),(I(0),\omega(0))\in
S'$ and their difference is in the subgroup generated by $S'$.  $\hfill \square$

\begin{lemma}\label{mov1}
Let $R$ be a smooth affine domain of dimension $n$ over an infinite
perfect field $k$. Let $(J,\omega_J)\in G\setminus S$. Then, there
exists $(K,\omega_K)\in G$ such that $K$ is reduced, $J+K=R$ and
$(J,\omega_J)+(K,\omega_K)\in S$.  Further, given finitely many ideals
$J_1,\cdots,J_l$ of $R$, each of which has height $n$, $K$ can be
chosen to be comaximal with $ J_1\cap \cdots \cap J_l$.
\end{lemma}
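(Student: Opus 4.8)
The plan is to prove Lemma \ref{mov1} by the standard ``moving lemma'' machinery adapted to the homotopy set $S$, using the already-established Lemma \ref{ci} (and its remark) together with Lemma \ref{moving}, Proposition \ref{addsub}, and the additivity of the various principles. First I would apply the Moving Lemma \ref{moving} to the ideal $J$ and the surjection $\omega_J$ (with $P=R^n$, $f=1$, and with $K$ taken to be the product of a representative of each $J_i$ so as to build the comaximality clause into the conclusion). This produces a lift of $\omega_J$ to a surjection $\Theta:R^n\surj J\cap K'$ where $K'$ has height $n$, $K'$ is comaximal with $J$ and with each $J_i$, and $J\cap K' = (c_1,\dots,c_n)$ for suitable $c_i$. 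By a further application of Swan's Bertini theorem (in the smooth affine setting, exactly as in Proposition \ref{homci}), one may arrange $K'$ to be a reduced local complete intersection ideal of height $n$; call it $K$ and let $\omega_K:(R/K)^n\surj K/K^2$ be induced by the images of the $c_i$.

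Next I would show $(J,\omega_J)+(K,\omega_K)\in S$. Since $J\cap K = (c_1,\dots,c_n)$ is a complete intersection ideal of $R$ of height $n$ with $\omega_{J\cap K}$ induced by the generators, Lemma \ref{ci} (applied to the reduced ideal $J\cap K$ --- or rather to its defining data; note $J\cap K$ is reduced because both $J\bmod$ its nilpotents and $K$ are, and they are comaximal, so after passing to $\sqrt{J}\cap K$ we may assume $J$ reduced, which costs nothing since $(J,\omega_J)$ and $(\sqrt J,\cdot)$ have the same class) gives a local complete intersection ideal $I\subset R[T]$ of height $n$ with $I(0),I(1)$ reduced of height $n$ and $(J\cap K,\omega_{J\cap K})=(I(1),\omega(1))-(I(0),\omega(0))\in S$. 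Hence $(J\cap K,\omega_{J\cap K})\in S$. Now $J+K=R$, so by Proposition \ref{addsub}(3) it suffices to know that $(K,\omega_K)\in S$ as well, for then the third element $(J,\omega_J)$ --- wait, that would give $(J,\omega_J)\in S$, contradicting the hypothesis. So instead I argue directly: having $(J\cap K,\omega_{J\cap K})\in S$ and $(K,\omega_K)\in S$ (the latter because $K$ is reduced, $K=(c_1,\dots,c_n)$ after adjusting generators via the comaximal decomposition, and Lemma \ref{ci} applies to $K$), Proposition \ref{addsub}(3) with the comaximal pair $(J,K)$ forces $(J,\omega_J)\in S$ --- again a contradiction.

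This tells me the correct route: I should \emph{not} try to show $(K,\omega_K)\in S$; rather I should choose $K$ so that $J\cap K$ is a complete intersection (hence its class lies in $S$ by Lemma \ref{ci}) and then simply \emph{define} or observe that $(J,\omega_J)+(K,\omega_K)$ equals, up to the relations, $(J\cap K,\omega_{J\cap K})$, which is in $S$. Concretely: by Lemma \ref{moving} lift $\omega_J$ to $\Theta:R^n\surj J\cap K$ with $K$ of height $n$, comaximal with $J$ and with $J_1\cap\cdots\cap J_l$, and (via Swan--Bertini) reduced and a local complete intersection; let $\omega_K$ be the induced orientation on $K$. Then $\omega_{J\cap K}$ (induced by $\Theta$) is a \emph{global} orientation, so $(J\cap K,\omega_{J\cap K})=(J,\omega_J)+(K,\omega_K)$ in $G$ by the very way the symbol $(\,\cdot\,,\cdot)$ decomposes over comaximal factors --- and a global orientation of a complete intersection ideal lies in $S$ by Lemma \ref{ci}. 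Thus $(J,\omega_J)+(K,\omega_K)\in S$, as required.

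The main obstacle I anticipate is the interplay between \emph{reducedness} requirements (needed for the symbol $(I(0),\omega(0))$, $(I(1),\omega(1))$ to make sense in Nori's $G$ and $S$) and the output of the Eisenbud--Evans-based Moving Lemma \ref{moving}, which produces no reducedness. This is exactly why the proof of Proposition \ref{homci} invokes \emph{Swan's Bertini theorem} in place of Eisenbud--Evans; I would need the smoothness of $R$ over the infinite perfect field $k$ precisely here, to guarantee that a generic choice of the ``moving'' parameter yields a reduced (indeed smooth, away from $J$) ideal $K$ of the right height that is a local complete intersection and is comaximal with all the finitely many prescribed ideals. The comaximality-with-$J_1\cap\cdots\cap J_l$ clause is routine: one throws those ideals into the ideal $K$ of Lemma \ref{moving} (or intersects the Bertini-generic hyperplane condition with the open condition of avoiding their supports), using that there are only finitely many of them and $k$ is infinite. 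Checking that the resulting $(J\cap K,\omega_{J\cap K})$ is genuinely a \emph{global} orientation --- i.e. that $\Theta$ really is a surjection onto $J\cap K$ and not merely onto $J\cap K'$ for some larger $K'$ --- is handled by the last step of Lemma \ref{moving} exactly as stated, so no new work is needed there.
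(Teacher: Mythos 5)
Your final argument — lift $\omega_J$ via the moving lemma, with Swan's Bertini theorem replacing Eisenbud--Evans so that the residual ideal $K$ is reduced (and comaximal with $J$ and with $J_1\cap\cdots\cap J_l$), and then observe that $(J,\omega_J)+(K,\omega_K)=(J\cap K,\omega_{J\cap K})$ is a complete-intersection cycle lying in $S$ by Lemma \ref{ci} — is exactly the paper's proof, which cites \cite[2.14]{brs2} (with Swan's Bertini in place of Eisenbud--Evans) together with Lemma \ref{ci}. The detour about trying to put $(K,\omega_K)$ itself in $S$ is correctly discarded by you, and the aside about passing to $\sqrt{J}$ is unnecessary (in this section $J$ is an intersection of maximal ideals, hence already reduced), so the proposal is correct and essentially identical in approach.
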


\proof Follows from \cite[2.14]{brs2} (using Swan's Bertini theorem
instead of the theorem of Eisenbud-Evans and) and Lemma \ref{ci}
above.

We now state a lemma from \cite[4.13]{k}.

\begin{lemma}\label{mphil}
Let $G$ be a free abelian group with basis $B=(e_i)_{i\in {\mathcal
I}}$ . Let $\sim$ be an equivalence relation on $B$. Define $x\in G$
to be ``reduced" if $x=e_1+\cdots +e_r$ and $e_i\not=e_j$ for $i
\not=j$.
%For $x\in G$ with $x=e_1+\cdots +e_r$ define``support"  of $x$ to be the set ${\rm supp}(x):=\{e_1,\cdots,e_r\}$. 
Define $x\in G$ to be ``nicely reduced" if $x=e_1+\cdots +e_r$ is such
that $e_i\not\sim e_j$ for $i \not=j$.  Let $S\subset G$ be such that
\begin{enumerate}
\item
Every element of $S$ is nicely reduced.
\item
Let $x,y\in G$ be such that each of $x,y,x+y$ is nicely reduced. If
two of $x,y,x+y$ are in $S$, then so is the third.
\item
Let $x\in G\setminus S$ be nicely reduced and let ${\mathcal J}\subset
{\mathcal I}$ be finite. Then there exists $y\in G$ with the following
properties : (i) $y$ is nicely reduced; (ii) $x+y\in S$; (iii) $y+e_j$
is nicely reduced $\forall j\in{\mathcal J}$.
\end{enumerate}

Let $H$ be the subgroup of $G$ generated by $S$. If $x \in H$ is
nicely reduced, then $x \in S$.
\end{lemma}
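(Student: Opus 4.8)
The plan is to argue entirely within the combinatorics of the quadruple $(G,B,\sim,S)$, using that a nicely reduced element is the same datum as a finite subset $\Sigma\subseteq B$ meeting every $\sim$-class at most once; I write $\mathbb{1}_{\Sigma}\in G$ for the associated element. \textbf{Step 1 (closure properties from (2)).} Applying (2) to the triple $(0,s,s)$ for any $s\in S$ gives $0\in S$ as soon as $S\neq\varnothing$, and $S\neq\varnothing$ follows from (3) applied to $0$. Applying (2) to $(s,s',s+s')$ shows: if $s,s'\in S$ and $s+s'$ is nicely reduced, then $s+s'\in S$; applying it to $(s-s',s',s)$ shows: if $s,s'\in S$ and $s-s'$ is nicely reduced, then $s-s'\in S$. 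In subset language, $S$ is closed under forming disjoint unions whose total support is again $\sim$-independent, and under $\Sigma\mapsto\Sigma\setminus\Sigma'$ when $\Sigma'\subseteq\Sigma$ and both lie in $S$.

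\textbf{Step 2 (the easy half-step).} If $x=u-v$ with $u,v\in S$ is nicely reduced, then necessarily $\Sigma_v\subseteq\Sigma_u$ (otherwise $x$ has a coefficient $-1$), so $x=\mathbb{1}_{\Sigma_u\setminus\Sigma_v}\in S$ by Step 1. Hence it is enough to bring an arbitrary nicely reduced $x\in H$ into the shape $u-v$ with $u,v\in S$. I would do this by induction on the number $r$ of generators in an expression $x=\epsilon_1 s_1+\cdots+\epsilon_r s_r$, $s_i\in S$, $\epsilon_i=\pm1$, taken with $r$ minimal. For $r\le 2$ this is direct: using $0\in S$ and Step 1 one disposes of the all-positive and mixed two-term cases, and a sum of two negative generators can be nicely reduced only if it is $0$. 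The inductive move for $r\ge 3$ is to fuse two terms into one: same-sign terms with $\sim$-disjoint supports fuse by closure under disjoint union, and opposite-sign terms one of whose supports contains the other fuse by closure under difference; either fusion lowers $r$, so if a compatible pair can always be found the induction closes.

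\textbf{Step 3 (the obstacle: rigid representations, and where (3) enters).} The difficulty is a representation in which no two terms are compatible: then every positive term has support sticking out of $\operatorname{supp}(x)$, every negative term has support $\sim$-meeting $\operatorname{supp}(x)$, and likewise for pairs. This rigidity is exactly what hypothesis (3) is designed to dissolve, and (3) cannot simply be dropped — already $S=\varnothing$ violates the conclusion. Assuming $x\notin S$ (towards a contradiction), I would apply (3) to $x$ with $\mathcal{J}$ containing all indices occurring in $\operatorname{supp}(x)$ and in the $\Sigma_{s_i}$; this produces a nicely reduced $y$ with $s^{*}:=x+y\in S$ and with $\Sigma_y$ $\sim$-disjoint from $\operatorname{supp}(x)$ and from every $\Sigma_{s_i}$. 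Since $\Sigma_y\subseteq\Sigma_{s^{*}}$, it suffices to show that this generically placed element $y$ lies in $S$, for then $x=s^{*}-y\in S$ by Step 1, contradicting $x\notin S$; thus the problem is transferred to $y$, whose supports are now in general position relative to the original data — which is precisely what is needed to perform compatible fusions on an expression for $y$ and push the induction through. I expect the genuinely delicate point to be the design of a complexity (measuring, say, the $\sim$-clashes among the supports relative to $\operatorname{supp}$) that provably strictly decreases under this transfer, so that the process terminates, together with the routine-but-fiddly verification that every intermediate element stays nicely reduced. The algebraic incarnations of hypotheses (2) and (3) are exactly the addition and subtraction principles and the moving lemma of Section 3, and the whole argument is the combinatorial shadow of concatenating $\mathbb{A}^1$-homotopies in the comparison of the two descriptions of the Euler class group.
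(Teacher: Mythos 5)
Your Steps 1--2 are fine: the closure properties you extract from (2), the observation that $0\in S$, and the reduction ``it suffices to write $x=u-v$ with $u,v\in S$'' (since $x$ nicely reduced forces $\Sigma_v\subseteq\Sigma_u$ and then (2) applied to $(x,v,u)$ gives $x\in S$) are all correct, as is the treatment of a representation in which compatible terms can be fused one after another. But the heart of the lemma is precisely the case you reach in Step 3 --- an arbitrary representation $x=\sum_{i=1}^{p}s_i-\sum_{j=1}^{q}t_j$ whose terms clash --- and there your argument stops: you yourself write that the decreasing complexity ``is expected'' to exist and that its construction and the verification that intermediate elements stay nicely reduced are left open. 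That is not a routine omission; it is the entire content of the lemma (which the paper does not prove either, but cites from \cite[4.13]{k}).

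Moreover, the specific transfer you propose does not visibly make progress. If $x\notin S$ and (3) yields $y$ in general position with $s^{*}:=x+y\in S$, then indeed $x\in S$ would follow from $y\in S$; but (3) gives no information about $y$ alone (and $y\notin S$ in general, otherwise (2) would force every nicely reduced element into $S$), and the only representation of $y$ you have is $y=s^{*}+\sum_j t_j-\sum_i s_i$, with $q+1$ positive and $p$ negative terms. Thus the roles of $p$ and $q$ are merely swapped, and iterating the move ($y'=w+\sum_i s_i-s^{*}-\sum_j t_j$ with $w=y+y'\in S$) makes the representations grow rather than shrink; no invariant measuring the ``$\sim$-clashes'' is defined, let alone shown to decrease. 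Note also that the general position of $\Sigma_y$ relative to the old supports does not by itself create any new compatible pair among the $s_i$'s and $t_j$'s, which is where the clashes live; what is actually needed (and what the cited argument supplies, in the spirit of the well-definedness proof for Segre classes in Section 4 and of \cite{brs2}) is a way of using (3) to separate the supports of the individual generators $s_i,t_j$ from one another, so that $\sum s_i$ and $\sum t_j$ can be replaced by nicely reduced elements of $S$ and your Step 2 reduction applies. As it stands, showing ``the generically placed $y$ lies in $S$'' is a priori exactly as hard as the original problem, so the proof is incomplete.
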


We have the following theorem.

\begin{theorem}\label{zero}
Let $R$ be a smooth affine domain of dimension $n$ over an infinite
perfect field $k$.  Let $(J,\omega_J)=0$ in $E(R)$. Then
$(J,\omega_J)=(I(0),\omega(0))-(I(1),\omega(1))$ in $G$ where (i)
$I\subset R[T]$ is a local complete intersection ideal of height $n$;
(ii) Both $I(0)$ and $I(1)$ are reduced ideals of height $n$; (iii)
$\omega(0)$ and $\omega(1)$ are induced by $\omega: (R[T]/I)^n\surj
I/I^2$.
\end{theorem}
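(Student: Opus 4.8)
The plan is to derive this from the abstract group-theoretic Lemma~\ref{mphil}. I would apply that lemma to the free abelian group $G$ with basis $B$, equipped with the equivalence relation on $B$ that declares $(m,\omega_m)\sim(m',\omega_{m'})$ exactly when $m=m'$. Under this relation the ``nicely reduced'' elements of $G$ are precisely the pairs $(J,\omega_J)$ with $J\subset R$ a reduced ideal of height $n$ (recall that in a smooth affine domain of dimension $n$ such an ideal is a finite intersection of distinct maximal ideals); in particular the element $(J,\omega_J)$ of the statement is nicely reduced. For the set playing the role of $S$ in Lemma~\ref{mphil} I would take the set $S'$ of Lemma~\ref{equal}, which by that lemma generates the same subgroup $H$ of $G$ as Nori's set $S$, so that $E(R)=G/H$ with $H=\langle S'\rangle$. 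The only preliminary observation needed is that for a \emph{nicely reduced} $x\in G$ one has $x\in S$ if and only if $x\in S'$: the inclusion $S'\subset S$ holds by definition, and conversely a nicely reduced element of $S$ is, by the very definition of $S'$, an element of $S'$.

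Next I would verify the three hypotheses of Lemma~\ref{mphil} for $S'$. Hypothesis (1), that every element of $S'$ is nicely reduced, is immediate from the definition of $S'$ (the ideals $I(0),I(1)$ being reduced of height $n$). Hypothesis (2) follows from Proposition~\ref{addsub}(3): if $x=(J_1,\omega_{J_1})$, $y=(J_2,\omega_{J_2})$ and $x+y$ are all nicely reduced, then $J_1,J_2$ are reduced of height $n$, the condition ``$x+y$ nicely reduced'' says exactly that $J_1+J_2=R$, and then $x+y=(J_1\cap J_2,\omega_{J_1\cap J_2})$; hence if two of $x,y,x+y$ lie in $S'$ (equivalently in $S$, by the preliminary observation), Proposition~\ref{addsub}(3) puts the third in $S$, and being nicely reduced it lies back in $S'$. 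Hypothesis (3) is the geometric input: given a nicely reduced $x=(J,\omega_J)\in G\setminus S'$ (hence $\notin S$) and finitely many basis elements $(m_j,\omega_{m_j})$, I would apply the Moving lemma~\ref{mov1} to obtain $(K,\omega_K)\in G$ with $K$ reduced of height $n$, $J+K=R$, $(J,\omega_J)+(K,\omega_K)\in S$, and $K$ comaximal with each $m_j$. Then $y:=(K,\omega_K)$ is nicely reduced, $x+y$ is nicely reduced (since $J+K=R$) and lies in $S$, hence in $S'$, and $y+(m_j,\omega_{m_j})$ is nicely reduced for every $j$.

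With the three hypotheses verified, Lemma~\ref{mphil} yields that every nicely reduced element of $H=\langle S'\rangle$ lies in $S'$. Now the hypothesis $(J,\omega_J)=0$ in $E(R)$ means precisely $(J,\omega_J)\in H$; since $(J,\omega_J)$ is nicely reduced we get $(J,\omega_J)\in S'$, i.e. there is a local complete intersection ideal $I\subset R[T]$ of height $n$ with $I(0),I(1)$ reduced of height $n$ and a surjection $\omega:(R[T]/I)^n\surj I/I^2$ inducing $\omega(0),\omega(1)$ such that $(J,\omega_J)=(I(1),\omega(1))-(I(0),\omega(0))$ in $G$. To match the sign in the statement I would finally apply the coordinate change $T\mapsto 1-T$ on $R[T]$, which swaps the fibres over $0$ and $1$, exactly as in the proof of Proposition~\ref{addsub}(1): writing $\CI$ for the image of $I$ and $\mu$ for the induced surjection, one has $\CI(0)=I(1)$ and $\CI(1)=I(0)$, so $(J,\omega_J)=(\CI(0),\mu(0))-(\CI(1),\mu(1))$, which is the asserted form.

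The step I expect to be the real obstacle is hypothesis (3) of Lemma~\ref{mphil}, namely producing the ``complementary'' reduced ideal $K$ with all the prescribed comaximalities; but this is exactly the content of Lemma~\ref{mov1} (which rests on \cite[2.14]{brs2}, Swan's Bertini theorem and Lemma~\ref{ci}), so in the present argument it is available off the shelf, and what remains is careful bookkeeping of the local orientations and of the identification of nicely reduced elements of $S$ with those of $S'$.
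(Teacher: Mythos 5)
Your proof is correct and follows essentially the same route as the paper: apply Lemma \ref{mphil} to $G$ with the equivalence $(m,\omega_m)\sim(m',\omega_{m'})$ iff $m=m'$, take $S'$ as the relation set (using Lemma \ref{equal}), and verify hypotheses (1)--(3) via the definition of $S'$, Proposition \ref{addsub}(3) and Lemma \ref{mov1}. Your explicit identification of nicely reduced elements of $S$ with elements of $S'$, and the $T\mapsto 1-T$ sign adjustment, are small clarifications of points the paper leaves implicit.
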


\proof The theorem is a direct application of Lemma \ref{mphil}.  We
take $G$ to be the free abelian group generated by the set $B$ of
pairs $(m,\omega_m)$, as in the definition of the Euler class group at
the beginning of this section. The equivalence relation on $B$ is
simply $(m_1,\omega_{m_1})\sim (m_2,\omega_{m_2})$ if $m_1=m_2$.

We have $E(R)=G/H$, where $H$ is the subgroup generated by $S$ (see
\ref{nori}). By (\ref{equal}) above, $H$ is also generated by $S'$. We
prove this theorem by showing that $S'$ satisfies properties (1)-(3)
of the above lemma.

Let $(J,\omega_J)\in S'$. As $J$ is a reduced ideal, it follows that
$(J,\omega_J)$ is nicely reduced.

Let $(J_1,\omega_{J_1}),(J_2,\omega_{J_2})\in S'$ be nicely reduced
such that $(J_1,\omega_{J_1})+(J_2,\omega_{J_2})$ is also nicely
reduced (i.e., $J_1+J_2=R$). By (\ref{addsub}), if any two of
$(J_1,\omega_{J_1})$, $(J_2,\omega_{J_2})$,
$(J_1,\omega_{J_1})+(J_2,\omega_{J_2})$ belong to $S'$, then so does
the third.  Property (3) follows from (\ref{mov1}).  $\hfill \square$

We now quickly recall the following definition from \cite{brs1}.

\begin{definition}
{\bf Euler class of a projective module :} Let $P$ be a projective
$R$-module of rank $n$ with trivial determinant. Fix $\chi:R\simeq
\wedge^n(P)$. By Swan's Bertini theorem, there is a surjection $\alpha
:P\sur J$ such that $J$ is a reduced ideal of height $n$. Choose an
isomorphism $\sigma:(R/J)^n\simeq P/JP$ (note that $P/JP$ is free by
(\ref{serre})) such that $\wedge^n\sigma=\chi\otimes R/J$.  Let
$\omega_J:(R/J)^n\stackrel{\sigma}\simeq P/JP\stackrel{\ol\alpha}\surj
J/J^2$.  The Euler class of $(P,\chi)$ is defined to be the image of
$(J,\omega_J)$ in $E(R)$ and is denoted as $e(P,\chi)$ (see
\cite{brs1} for further details).
\end{definition}

We reprove the following theorem from \cite{brs1} which shows that
the Euler class is the precise obstruction for a projective module
to split off a free summand.

\begin{theorem}\cite[4.13]{brs1}
Let $R$ be as in (\ref{zero}) and let $P$ be a projective $R$-module
of rank $n$ with trivial determinant. Fix $\chi:R\simeq
\wedge^n(P)$. Then $e(P,\chi)=0$ in $E(R)$ if and only if $P\simeq
Q\oplus R$ for some $R$-module $Q$.
\end{theorem}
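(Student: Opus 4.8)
The strategy is the standard two-way implication, with the forward direction being essentially a routine computation and the converse being the substantive part. Suppose first that $P \simeq Q \oplus R$. Then one can choose the surjection $\alpha : P \surj J$ defining $e(P,\chi)$ to be a projection onto a direct summand, which forces $J$ to contain a unimodular element of $P$; concretely one arranges $\omega_J$ to lift to an honest surjection $R^n \surj J$, so that $(J,\omega_J)$ lies in the subgroup $H_1$ of Definition~\ref{brs}, hence $e(P,\chi) = 0$. Since $E(R) = G/H = G/H_1$ (this equivalence of the two definitions is part of the setup we may quote from \cite{brs1}), we get $e(P,\chi) = 0$ in $E(R)$.

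For the converse, assume $e(P,\chi) = 0$ in $E(R)$. By Swan's Bertini theorem choose $\alpha : P \surj J$ with $J$ a reduced ideal of height $n$, and pick $\sigma : (R/J)^n \iso P/JP$ with $\wedge^n \sigma = \chi \otimes R/J$, giving $\omega_J : (R/J)^n \surj J/J^2$ with $(J,\omega_J)$ representing $e(P,\chi)$. Since $(J,\omega_J) = 0$ in $E(R)$, apply Theorem~\ref{zero}: there is a local complete intersection ideal $I \subset R[T]$ of height $n$, with $I(0),I(1)$ reduced of height $n$, and $\omega : (R[T]/I)^n \surj I/I^2$ such that $(J,\omega_J) = (I(0),\omega(0)) - (I(1),\omega(1))$ in $G$. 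By Proposition~\ref{homci} we may further take $I = (g_1,\ldots,g_n)$ a complete intersection with $\omega$ induced by the $g_i$; then $I(1) = (g_1(1),\ldots,g_n(1))$ is $n$-generated with $\omega(1)$ induced by those generators, so in particular $\omega(1)$ lifts to a surjection $R^n \surj I(1)$.

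Now the key point is to transfer this liftability from $I(1)$ to $J$ using the homotopy ideal $I$ together with the subtraction principle and the homotopy theorem. One has $J \cap I(0) = I(1)$ (this is the content of "$(J,\omega_J) + (I(0),\omega(0)) = (I(1),\omega(1))$" once one checks, as in Proposition~\ref{addsub}(2), that $J + I(0) = R$ since all ideals are reduced). We are given a surjection $P[T] \surj I/(I^2T)$ — built from $\omega$ together with the fact that $\omega(0)$ matches $\alpha$ suitably modulo $I(0)$ — and by Theorem~\ref{hom} (the homotopy theorem, valid here precisely because $R$ is \emph{smooth}) this lifts to a surjection $\Phi : P[T] \surj I$. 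Specializing at $T = 1$ and $T = 0$ gives surjections $\Phi(1) : P \surj I(1)$ and $\Phi(0) : P \surj I(0)$ agreeing with $\omega(1),\omega(0)$ on the respective quotients. Since $I(1) = J \cap I(0)$ with $J + I(0) = R$, and $\Phi(1)$ agrees with $\Phi(0)$ on $R/I(0)$, the subtraction principle (\ref{sub}) applied to $\Phi(1)$ and $\Phi(0)$ produces a surjection $P \surj J$ lifting $\omega_J$ (compatibly with $\sigma$ and hence with $\chi$). Equivalently $\omega_J$ is a global orientation, so by the Bhatwadekar–Sridharan theory this forces $P$ to have a unimodular element, i.e. $P \simeq Q \oplus R$.

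\textbf{Main obstacle.} The delicate step is the bookkeeping that ensures the homotopy ideal $I$ furnished by Theorem~\ref{zero} really does carry a surjection $P[T] \surj I/(I^2T)$ compatible with the chosen $\alpha$ and $\chi$, so that after applying Theorem~\ref{hom} and specializing, the resulting surjection $P \surj J$ genuinely lifts $\omega_J$ rather than some twist of it by an element of $SL_n(R/J)$. Handling this twist requires the observation (recorded in the Remark after Definition~\ref{brs}) that replacing $\omega_J$ by $\omega_J \bar\sigma$ for $\bar\sigma \in SL_n(R/J) = \CE_n(R/J)$ does not change the class in $G/H$, together with a careful matching of determinants via $\wedge^n \sigma = \chi \otimes R/J$. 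Everything else — the forward direction, the reductions to complete intersection ideals, and the applications of (\ref{sub}) — is routine given the machinery already assembled.
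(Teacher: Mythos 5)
Your reduction via Theorem \ref{zero} and Proposition \ref{homci} matches the paper, but the core of your argument for the hard direction does not go through. First, you claim to be ``given a surjection $P[T]\surj I/(I^2T)$'' over the homotopy ideal $I$ produced by Theorem \ref{zero}, and then apply Theorem \ref{hom} to it. Such a surjection would in particular specialize at $T=0$ to a surjection $P\surj I(0)$ compatible with $\omega(0)$, and nothing of the sort is available: the only surjection from $P$ we possess is $\alpha:P\surj J$, and $J$ is comaximal with $I(0)$, so ``$\omega(0)$ matches $\alpha$ modulo $I(0)$'' has no content. Second, your use of the subtraction principle (\ref{sub}) with the module $P$ is illegitimate, since (\ref{sub}) requires $P=Q\oplus A$, i.e.\ exactly the unimodular element you are trying to produce; and if you instead run it with free modules you only learn that $\omega_J$ lifts to a surjection $R^n\surj J$, i.e.\ that the class vanishes in the Bhatwadekar--Sridharan group. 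Your final appeal to ``Bhatwadekar--Sridharan theory'' to pass from that to $P\simeq Q\oplus R$ is then precisely \cite[4.13]{brs1}, the theorem being reproved here from Nori's definition, so the argument is circular at the decisive moment.

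The missing idea is the construction of two explicit auxiliary homotopies on which $P$ genuinely acts. The paper first forms $I'=(b_1,\cdots,b_{n-1},(1-b_n)T+b_n)$ and $I''=I'\cap J[T]$, so that $I''(1)=J$ (where $\alpha$ lives) and $I''(0)=I(1)$; applying Theorem \ref{hom} together with \cite[Theorem, pp 457]{mrs} to this homotopy transfers the Euler class along it and yields an honest surjection $\beta:P\surj I(1)$ with $e(P,\chi)=(I(1),\omega(1))$. It then takes $K=(a_1,\cdots,a_{n-1},T^2-Ta_n+a_n)$, which satisfies $K(0)=I(1)$ and $K(1)=R$, lifts the induced map to $P[T]\surj K/(K^2T)$ by \cite[3.9]{brs1}, lifts again to $P[T]\surj K$ by Theorem \ref{hom}, and evaluates at $T=1$ to get a surjection $P\surj R$, i.e.\ the unimodular element, with no appeal to the theorem itself. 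Without this two-step homotopy construction (or an equivalent device) your proposal has no way to convert vanishing of the class into a splitting of $P$.
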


\proof 
We first assume that $e(P,\chi)=0$ in $E(R)$.  Let us choose a
surjection $\alpha :P\surj J$, where $J$ is a reduced ideal of height
$n$. Let $\sigma :(R/J)^n\simeq P/JP$ be an isomorphism such that
$\wedge^n\sigma=\chi\otimes R/J$.  Composing with $\alpha\ot R/J$ we
obtain a surjection $\omega_{J}:(R/J)^n\surj J/J^2$. Then
$e(P,\chi)=(J,\omega_J)$.

From the given condition, $(J,\omega_J)=0$ in $E(R)$.  Therefore, by
(\ref{zero}), we have $(J,\omega_J)=(I(1),\omega(1))-(I(0),\omega(0))$
in $G$ where (i) $I\subset R[T]=(f_1,\cdots,f_n)$ is a complete
intersection ideal of height $n$; (ii) Both $I(0)$ and $I(1)$ are
reduced ideals of height $n$; (iii) $\omega(0)$ and $\omega(1)$ are
induced by $\{f_1(0),\cdots,f_n(0)\}$ and $\{f_1(1),\cdots,f_n(1)\}$,
respectively.

For simplicity, let us write $f_i(1)=a_i$ for $i=1,\cdots,n$ and
$f_i(0)=b_i$ for $i=1,\cdots,n$.  As
$(J,\omega_J)+(I(0),\omega(0))=(I(1),\omega(1))$ in $G$ and all the
ideals are reduced, it follows that $J\cap I(0)=I(1)$ and
$J+I(0)=R$. Now, $I(0)=( b_1,\cdots,b_n)$. Using a standard general position 
argument we may assume that $\text{ht}(b_1,\cdots,b_{n-1})=n-1$ and
$(b_1,\cdots,b_{n-1})+J=R$. We consider
$I'=(b_1,\cdots,b_{n-1},(1-b_n)T+b_n)\subset R[T]$ and $I''=I'\cap
J[T]$. Let $\omega'':(R[T]/I'')^n\surj I''/I''^2$ be induced by
$\omega_J$ and $\{b_1,\cdots,b_{n-1},(1-b_n)T+b_n\}$.  Then,
$I''(1)=J$, $I''(0)=J\cap I(0)=I(1)$ and moreover,
$(I''(1),\omega''(1))=(J,\omega_J)$ and
$(I''(0),\omega''(0))=(I(1),\omega(1))$ in $G$.

We have $e(P,\chi)= (J,\omega_J)=(I''(1),\omega''(1))$. Therefore, by
(\ref{hom}) and \cite[Theorem, pp 457]{mrs} it follows that there
exists a surjection $\beta : P\surj I(1)$ such that
$e(P,\chi)=(I(1),\omega(1))$. Combination of (\ref{hom}) and
\cite[Theorem, pp 457]{mrs} essentially implies that if two Euler
cycles are homotopic and one of them is the Euler class of a
projective module, then the other one is also the Euler class of the
same projective module.

Note that, we have $I(1)=(a_1,\cdots,a_n)$ and $\omega(1)$ is induced
by $\{a_1,\cdots,a_n\}$.  By a general position argument we may assume
that $\hh(a_1,\cdots,a_{n-1})=n-1$. Let
$K=(a_1,\cdots,a_{n-1},T^2-Ta_n+a_n)$.  Let $\gamma: R[T]^n\surj K$ be
the map which is given by $e_i\mapsto a_i$ for $1\leq i\leq n-1$ and
$e_n\mapsto T^2-Ta_n+a_n$.

We choose  $\Delta :P[T]/KP[T]\simeq (R[T]/K)^n$ such
that $\wedge^n\Delta=(\chi\ot R[T]/K)^{-1}$.  Composing, we get a
surjection $\ol\phi:P[T]\surj K/K^2$. It is easy to check that
$\ol\phi(0)=\beta\ot R/I(1)$.  As $K(0)=I(1)$, it follows from
\cite[3.9]{brs1} that $\ol\phi$ has a lift to $\ol\varphi:P[T]\surj
K/(K^2T)$. By (\ref{hom}) $\ol\varphi$ can be lifted to $\varphi:
P[T]\surj K$. But then $\varphi(1):P\surj K(1)=R$.

Conversely, if $P\simeq Q\op R$, by a result of Mohan Kumar
\cite[Theorem 1]{mk2}, $P$ maps onto an ideal $J$ of height $n$ such
that $J$ is generated by $n$ elements. It is easy to check using
(\ref{homci}) that $e(P,\chi)=0$ in $E(R)$.  $\hfill \square$

\bigskip

{\bf From now on we assume $R$ to be a  Noetherian ring of
dimension $n$ containing $\bq$}.  For such a ring the ($n$-th) Euler
class group was defined in \cite{brs2}. Let us quickly recall.

\definition\label{noeth} Let $G$ be the free abelian group on the set
$B$ of pairs $(\mathfrak{n},\omega_{\mathfrak{n}})$, where
$\mathfrak{n}$ is an $\mathfrak{m}$-primary ideal of height $n$ such
that $\mu(\mathfrak{n}/\mathfrak{n}^2)=n$ and
$\omega_{\mathfrak{n}}:(R/\mathfrak{n})^n\surj
\mathfrak{n}/\mathfrak{n}^2$ is a surjection. Let $J\subset R$ be an
ideal of height $n$ such that $\mu(J/J^2)=n$ and let
$\omega_J:(R/J)^n\surj J/J^2$ be a surjection. Let
$J=\cap_{1}^{r}\mathfrak{n}_i$ be the (irredundant) primary
decomposition of $J$. Therefore, $\mathfrak{n}_i$ is an
$\mathfrak{m}_i$-primary ideal for some maximal ideal $\mathfrak{m}_i$
of height $n$. Then, $\omega_J$ induces
$\omega_{\mathfrak{n}_i}:(R/\mathfrak{n}_i)^n\surj
\mathfrak{n}_i/\mathfrak{n}_i^2$. One associates and writes
$(J,\omega_J):=\sum_{1}^{r}
(\mathfrak{n}_i,\omega_{\mathfrak{n}_i})\in G$. Let $H_1$ be the
subgroup of $G$ generated by elements $(J,\omega_J)$ for which
$\omega_J$ has a lift to a surjection $\theta:R^n\surj J$. The Euler
class group is defined as $E(R):=G/H_1$.

Compare this definition with (\ref{brs}). Now, if we want to extend
Nori's definition (\ref{nori}) to a Noetherian ring, we
should first ask, obviously, the following natural question :

\quest\label{qn} Let $R$ be a commutative Noetherian ring of dimension
$n$ containing $\bq$.  Let $I\subset R[T]$ be an ideal of height $n$
such that there is a surjection $\omega:(R[T]/I)^n\surj I/I^2$. Assume
further that $I(0)$ and $I(1)$ are both of height $n$.  Then, do we
have $(I(1),\omega(1))=(I(0),\omega(0))$ in $E(R)$?

To put the above question in proper perspective, we proceed in the
 following way.  Let $K$ be the subset of $E(R)$ which consists of
 elements $(J,\omega_J)$ for which $
 (J,\omega_J)=(I(1),\omega(1))-(I(0),\omega(0))$ in $E(R)$ for some
 ideal $ I\subset R[T]$ and surjection $ \omega:(R[T]/I)^n\surj
 I/I^2$.  We now prove the following proposition.

\begin{proposition}
The set $K$, as described above, is a subgroup of $E(R)$.
\end{proposition}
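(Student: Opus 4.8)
The plan is to verify that $K$ is closed under the three operations needed for a subgroup: it contains $0$, it is closed under negation, and it is closed under addition. The element $0\in K$ is immediate, by taking $I$ to be an extended ideal (e.g. $I=J_0R[T]$ for a suitable $J_0$, or simply observing that $(I(1),\omega(1))-(I(0),\omega(0))=0$ when $I$ is extended from $R$). Closure under negation is handled exactly as in the smooth case, cf. part (1) of Proposition~\ref{addsub}: given $(J,\omega_J)=(I(1),\omega(1))-(I(0),\omega(0))$ in $E(R)$, apply the automorphism $R[T]\to R[Y]$, $T\mapsto Y=T-1$, transport $I$ and $\omega$ along it to get $\CI$ and $\omega'$, and note $-(J,\omega_J)=(\CI(1),\omega'(1))-(\CI(0),\omega'(0))$, so $-(J,\omega_J)\in K$.

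The substantive point is closure under addition. First I would reduce, using (the Noetherian version of) Lemma~\ref{ci} as recorded in Remark~\ref{ci1}, to the situation where the two given elements come from \emph{reduced-style} data: if $x_i=(I_i(1),\omega_i(1))-(I_i(0),\omega_i(0))$ in $E(R)$ for $i=1,2$, then I want to replace each $I_i(\epsilon)$ by an honest ideal of $R$ of height $n$ sitting inside $G$, so that $x_1,x_2$ become differences of basis-type elements $(J,\omega_J)$. Concretely, applying Lemma~\ref{ci} (no-reducedness version) to $(I_i(1),\omega_i(1))$ and to $(I_i(0),\omega_i(0))$ separately, each of these four terms is itself of the form $(\CI(1),\mu(1))-(\CI(0),\mu(0))$, which is exactly the shape defining membership in $K$; this is the same manoeuvre used in Lemma~\ref{equal}. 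So it suffices to show: if $(J_1,\omega_{J_1}),(J_2,\omega_{J_2})\in G$ are both in $K$ then their sum, interpreted suitably, is in $K$. For that I would follow the strategy of parts (2) and (3) of Proposition~\ref{addsub}: after a general-position argument, arrange $J_1+J_2=R$ (if they are not already comaximal, use a Moving-lemma type replacement modulo the other ideal's square so as not to change the class in $E(R)$), lift $\omega_{J_i}$ to genuine surjections $R^n\surj J_i$ up to the relations in $E(R)$ using the subtraction principle, then build the ``concatenating'' ideal $\CI=(a_1T+b_1(1-T),\dots,a_nT+b_n(1-T))\subset R[T]$ with $\CI(1)=J_1$, $\CI(0)=J_2$, so that $(J_1,\omega_{J_1})-(J_2,\omega_{J_2})=(\CI(1),\omega_\CI(1))-(\CI(0),\omega_\CI(0))\in K$; combined with negation-closure this gives closure under addition.

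The main obstacle is that we are now working over a general Noetherian ring containing $\bq$ rather than a smooth affine domain, so Swan's Bertini theorem is unavailable; in its place I would use the Eisenbud--Evans theorem (\ref{EE}) together with the Moving Lemma (\ref{moving}) and the Addition/Subtraction principles (\ref{add}), (\ref{sub}) established in Section~3 — note these were proved for general Noetherian rings with the relevant inequality $n+\hh(\cdot)\ge \dd R+3$, which here reads $n+n\ge n+3$, i.e. $n\ge 3$, matching the standing hypothesis. The delicate bookkeeping is that operations in $K$ are only well-defined in $E(R)$, not in $G$, so every step must be checked to preserve the class in $E(R)$; in particular, when I replace one ideal by a comaximal one via the Moving Lemma I must absorb the difference into $H_1$ (the subgroup of $(J,\omega_J)$ with $\omega_J$ liftable to $R^n\surj J$), and I must verify that the lifts furnished by (\ref{sub}) differ from the originals precisely by such elements. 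Once that is confirmed, the three closure properties follow and $K$ is a subgroup of $E(R)$.
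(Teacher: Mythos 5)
Your treatment of $0\in K$ and of closure under negation is fine (the paper dismisses these as easy as well), but the addition step — the only substantive point — rests on an idea that is not available here and in fact cannot work. You propose to mimic parts (2)--(3) of Proposition \ref{addsub}: make the two ideals comaximal, produce generators $a_1,\dots,a_n$ of $J_1$ and $b_1,\dots,b_n$ of $J_2$ inducing $\omega_{J_1}$ and $\omega_{J_2}$, and concatenate via $\CI=(a_1T+b_1(1-T),\dots,a_nT+b_n(1-T))$. In the smooth case that generation step is extracted from the subtraction principle \emph{only because} Proposition \ref{homci} first replaces the homotopy ideal $I\subset R[T]$ by a complete intersection $(f_1,\dots,f_n)$ with $\omega$ induced by the $f_i$, and \ref{homci} is powered by Theorem \ref{hom} (\cite[3.8]{brs1}), which the paper explicitly notes fails for non-smooth rings. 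Over a general Noetherian ring containing $\bq$ there is no substitute: Eisenbud--Evans, the moving lemma and the addition/subtraction principles of Section 3 do not give you a surjection $R^n\surj J_i$ inducing $\omega_{J_i}$ — indeed the subtraction principle \ref{sub} needs such a surjection onto one of the ideals as \emph{input}. Worse, if your step succeeded, then $\omega_{J_1}$ would be a global orientation, so $(J_1,\omega_{J_1})=0$ in $E(R)$; applied to arbitrary elements of $K$ this would force $K$ to be trivial, contradicting Bhatwadekar's Example \ref{smb}, which is the raison d'\^etre of the whole discussion. (Your worry about absorbing discrepancies into $H_1$ is, by contrast, harmless: $K$ is a subset of $E(R)$, so all identities are already taken modulo $H_1$.)

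The paper's proof of closure under addition goes a different way, and this is the missing idea: it works inside the Euler class group $E(R[T])$ of the polynomial ring. Given $(J,\omega_J)=(I(1),\omega(1))-(I(0),\omega(0))$ and $(J',\omega_{J'})=(I'(1),\omega'(1))-(I'(0),\omega'(0))$ in $E(R)$, one applies the moving lemma to replace $(I,\omega)$ by $(\CI,\omega_{\CI})$ with $(I,\omega)=(\CI,\omega_{\CI})$ in $E(R[T])$ and $\CI+I'=R[T]$; the specialization homomorphisms $\Phi_0,\Phi_1:E(R[T])\to E(R)$ from \cite{d2} then guarantee that $(I(1),\omega(1))-(I(0),\omega(0))=(\CI(1),\omega_{\CI(1)})-(\CI(0),\omega_{\CI(0)})$ in $E(R)$, and the intersection $I''=\CI\cap I'$, with the orientation induced by $\omega_{\CI}$ and $\omega'$, exhibits the sum as $(I''(1),\tilde\omega(1))-(I''(0),\tilde\omega(0))\in K$. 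No global generation of any ideal of $R$ is ever needed. To repair your write-up you would have to replace the concatenation argument by this comaximalization-and-intersection argument (or supply some other device that avoids lifting $\omega_{J_i}$ to surjections $R^n\surj J_i$), and you would need the input from \cite{d2} on the evaluation maps, which your proposal does not mention.
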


\proof
It is easy to check that if $x\in K$, then $-x\in K$. 

Let $(J,\omega_J)=(I(1),\omega(1))-(I(0),\omega(0))$ in $E(R)$ and
$(J',\omega_J')=(I'(1),\omega'(1))-(I'(0),\omega'(0))$ in $E(R)$.  We
need only show that $(J,\omega_J)+(J',\omega_J')\in K$.  For this
proof we need to invoke a result from \cite{d2}.  Applying the moving
lemma (\ref{moving}) twice, we can find $(\CI,\omega_{\CI})\in
E(R[T])$ such that $(I,\omega)=(\CI,\omega_{\CI})$ in $E(R[T])$ and
$\CI+I'=R[T]$.  Now, there exists a group homomorphism $\Phi_0:
E(R[T])\surj E(R)$ which is induced by specialization at $T=0$. It
takes $(I,\omega)$ to $(I(0),\omega(0))$ and $(\CI,\omega_{\CI})$ to
$(\CI(0),\omega_{\CI(0)})$. Similarly, one can define a group
homomorphism at $T=1$ with similar properties. Therefore,
$(I(1),\omega(1))-(I(0),\omega(0))=(\CI(1),\omega_{\CI(1)})-(\CI(0),\omega_{\CI(0)})$
in $E(R)$.  Now,
$$(J,\omega_J)+(J',\omega_J')=(\CI(1),\omega_{\CI(1)})-(\CI(0),\omega_{\CI(0)})+(I'(1),\omega'(1))-(I'(0),\omega'(0))$$
and as $\CI+I'=R[T]$, writing $I''=I\cap \CI$ we have,
$$(J,\omega_J)+(J',\omega_J')=(I''(1),\tilde{\omega}(1))-( I''(0),\tilde{\omega}(0)),$$
where $\tilde{\omega}:(R[T]/I'')^n\surj I''/I''^2$ is induced by $\omega_{\CI}$
and $\omega'$.
$\hfill \square$ 

\medskip

We may now rephrase Question \ref{qn} in the following way.

\medskip

\quest
Is $K$ a non-trivial subgroup of $E(R)$?

\medskip

\rmk It is clear from the first half of this section that if $R$ is a
smooth affine domain over an infinite perfect field, then $K$ is
trivial.

Bhatwadekar, through personal communication, pointed out to us that if
$R$ is not smooth, then $K$ could be non-trivial, even for a normal
affine algebra over an algebraically closed field. His example is as
follows. We sincerely thank him for allowing us to include the example here.

\begin{example}\label{smb}(Bhatwadekar)
We consider the same affine algebra as in \cite[Example 6.4]{brs1}.
We shall freely use facts and details from that example. Let
$$B=\frac{\BC [X,Y,Z,W]}{(X^5+Y^5+Z^5+W^5)}$$ 
Then $B$ is a graded
normal affine domain over $\BC$ of dimension $3$, having an isolated
singularity at the origin. Let $F(B)$ be the subgroup of $\wt{K}_0(B)$
generated by all elements of the type $[P]-[P^\ast]$, where $P$ is a
finitely generated projective $B$-module. As $B$ is graded,
$\text{Pic}(B)=0$. Therefore, by \cite[6.1]{brs1}
$F(B)=F^3K_0(B)$. Since $\text{Proj}(B)$ is a smooth surface of degree
$5$ in $\BP^3$, it follows from a result of Srinivas that $F(B)=
F^3K_0(B)\not = 0$. Therefore, there exists a projective $B$-module
$P$ of rank $3$ with trivial determinant such that $[P] - [P^{\ast}]$
is a nonzero element of $F(B)$.  This implies that $P$ does not have a
unimodular element.  We now consider the ring homomorphism $f: B
\rightarrow B[T]$ given by $f(x) = xT, f(y) = yT, f(z) = zT, f(w) =
wT$.  We regard $B[T]$ as a $B$-module through this map and $Q = P
\otimes_B B[T]$.  Then it is easy to see that $Q/TQ$ is free and
$Q/(T-1)Q = P$.  Therefore, $Q$ is a projective $B[T]$-module which is
not extended from $B$. Now consider a surjection $\alpha : Q\surj I$
where $I\subset B[T]$ is an ideal of height $3$. Fix an isomorphism
$\chi:B[T]\simeq \wedge^3(Q)$. Note that, by (\ref{serre}), $Q/IQ$ is a free
$B[T]/I$-module. We choose an isomorphism $\sigma:(B[T]/I)^3\simeq
Q/IQ$ such that $\wedge^3\sigma= \chi\otimes B[T]/I$. Composing
$\sigma$ and $\alpha\otimes B[T]/I$, we obtain a surjection
$\omega:(B[T]/I)^3\surj I/I^2$.  It is now easy to see that
$(I(0),\omega(0))=0$ in $E(B)$ (as $Q/TQ$ is free), whereas
$(I(1),\omega(1))=e(Q/(T-1)Q,\chi(1))=e(P,\chi(1))$ in $E(B)$ cannot
be trivial (as $P$ does not have a unimodular element).
$\hfill \square$ 
\end{example}

\bigskip

We now extend  Nori's definition to Noetherian rings, as follows.

\begin{definition}\label{noeth1}
Let $R$ be a commutative Noetherian ring of dimension $n$ containing
$\bq$. Let $G$ be the free abelian group as considered in (\ref{noeth}).  Now
let $S$ be the set of elements
$(J,\omega_J)=(I(0),\omega(0))-(I(1),\omega(1))$ of $G$ where (i)
$I\subset R[T]$ is an ideal of height $n$ such that
$I=(f_1,\cdots,f_n)$; (ii) Both $I(0)$ and $I(1)$ are ideals of height
$n$; (iii) $\omega(0)$ and $\omega(1)$ are induced by
$\{f_1(0),\cdots,f_n(0)\}$ and $\{f_1(1),\cdots,f_n(1)\}$,
respectively.  Let $H$ be the subgroup generated by $S$. We define
the group $\wt{E}(R)$ as $\wt{E}(R):=G/H$.
\end{definition}

Clearly, as an easy consequence of Lemma \ref{homci} above, one can
see that when $R$ is a smooth affine domain, this definition is
equivalent to the one given in \ref{nori}. We now show that this
definition is equivalent to the definition given by
Bhatwadekar-Sridharan for Noetherian rings in \cite{brs2}
((\ref{noeth}) here).

\begin{proposition}
Let $E(R)$ be the Euler class group defined as in (\ref{noeth}). Then
$\wt{E}(R)\simeq E(R)$.
\end{proposition}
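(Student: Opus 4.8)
The plan is to construct mutually inverse group homomorphisms between $\wt{E}(R)=G/H$ and $E(R)=G/H_1$, where both groups have the \emph{same} underlying free abelian group $G$ (the one generated by pairs $(\mathfrak{n},\omega_{\mathfrak{n}})$ with $\mathfrak{n}$ an $\mathfrak{m}$-primary ideal of height $n$). Since the identity map on $G$ is the only natural candidate, the whole content is to show that the two subgroups coincide: $H=H_1$. So the proof reduces to two containments.

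First I would show $H\subseteq H_1$. A generator of $H$ has the form $(J,\omega_J)=(I(0),\omega(0))-(I(1),\omega(1))$ where $I\subset R[T]$ is a height-$n$ ideal with $I=(f_1,\dots,f_n)$ and both $I(0)$, $I(1)$ have height $n$. The key observation is that $\omega(0)$ and $\omega(1)$ are \emph{global} orientations in the sense of (\ref{noeth}) — each is literally induced by $n$ genuine generators $f_i(0)$, resp. $f_i(1)$, of the ideal. Hence both $(I(0),\omega(0))$ and $(I(1),\omega(1))$ lie in $H_1$, so their difference does too. Thus every generator of $H$ is in $H_1$, giving $H\subseteq H_1$.

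The reverse containment $H_1\subseteq H$ is the substantive direction. A generator of $H_1$ is $(J,\omega_J)$ where $\omega_J:(R/J)^n\surj J/J^2$ lifts to a surjection $\theta:R^n\surj J$; so $J=(a_1,\dots,a_n)$ with $\omega_J$ induced by the $a_i$. I want to realize this as $(I(0),\omega(0))-(I(1),\omega(1))$ for a suitable complete-intersection ideal $I\subset R[T]$ — i.e. I need a homotopy connecting $(J,\omega_J)$ to $0$. The natural device (mirroring the smooth case, Lemma \ref{ci} and Remark \ref{ci1}, together with the argument in the proof of \cite[4.13]{brs1} reproduced above) is: by a general position argument one may assume $\hh(a_1,\dots,a_{n-1})=n-1$ and $(a_1,\dots,a_{n-1})+J=R$; then set $I=(a_1,\dots,a_{n-1},T^2-Ta_n+a_n)\subset R[T]$. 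One checks $\hh I=n$, that $I(1)=(a_1,\dots,a_{n-1},1)=R$ while $I(0)=(a_1,\dots,a_{n-1},a_n)=J$, and that the induced orientations give $(I(0),\omega(0))=(J,\omega_J)$ and $(I(1),\omega(1))=0$ in $G$ (the height-$n$ primary components of $R$ being none). Hence $(J,\omega_J)=(I(0),\omega(0))-(I(1),\omega(1))\in S\subseteq H$. This shows each generator of $H_1$ lies in $H$, so $H_1\subseteq H$, and therefore $H=H_1$ and $\wt{E}(R)=G/H=G/H_1=E(R)$.

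The main obstacle is the $H_1\subseteq H$ direction, and within it the general position step: one must justify, over an arbitrary Noetherian ring of dimension $n$ containing $\bq$, that the generators $a_1,\dots,a_n$ of $J$ can be adjusted (by an elementary transformation preserving $\omega_J$) so that $(a_1,\dots,a_{n-1})$ has height $n-1$ and is comaximal with $J$ — this is the Eisenbud--Evans type input (\ref{EE}) applied with care to the ideal $J$ and its associated primes, exactly as in \cite{brs1,brs2}. One must also verify that the ideal $I=(a_1,\dots,a_{n-1},T^2-Ta_n+a_n)$ genuinely has height $n$ in $R[T]$ (it contains a monic polynomial modulo $(a_1,\dots,a_{n-1})$, so this is where the monic quadratic $T^2-Ta_n+a_n$ earns its keep) and that the specialization maps $G(R[T])$-side relations correctly to $G$, so that the bookkeeping of the induced local orientations at $T=0$ and $T=1$ is consistent. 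Once these routine-but-delicate checks are in place, the equivalence is immediate.
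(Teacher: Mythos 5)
Your proof is correct and follows essentially the paper's own route: the two groups share the same free abelian group $G$, the containment $H\subseteq H_1$ is immediate because the two end cycles of any generator of $S$ are globally generated, and the containment $H_1\subseteq H$ is exactly the content of Lemma \ref{ci} together with Remark \ref{ci1} (which the paper simply cites), and which you reprove inline via the homotopy $I=(a_1,\dots,a_{n-1},T^2-Ta_n+a_n)$ with $I(0)=J$, $I(1)=R$. One small slip: asking that $(a_1,\dots,a_{n-1})$ be comaximal with $J$ is impossible (it is contained in $J$) and also unnecessary --- the general position step only needs $\hh (a_1,\dots,a_{n-1})=n-1$ (achieved by an Eisenbud--Evans elementary change of the generators) to guarantee $\hh I=n$.
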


\proof It is obvious from definitions (\ref{noeth}, \ref{noeth1}) that
$H\subset H_1$.  To see that $H_1\subset H$, apply (\ref{ci},
\ref{ci1}).  $\hfill \square$

\remark In a similar manner, one can also define the weak Euler class
group $E_{0}(R)$ in terms of homotopy.

%$$$$$$$$$$$$$$$$$$$$$$$$$$$$$$$$$$$$$$$$$$$$$$$$$$$$$$$$$4
\noindent{\bf Acknowledgements:} We sincerely thank S. M. Bhatwadekar
for Example \ref{smb}.

{}

\end{document}